\newcounter{constant}
\newcounter{bigconstant}
\newtheorem{theorem}{Theorem}[section]
\newtheorem{proposition}[theorem]{Proposition}
\newtheorem{lemma}[theorem]{Lemma}
\newtheorem{corollary}[theorem]{Corollary}
\numberwithin{equation}{section} % numbering according to sections
\theoremstyle{definition}
\newtheorem{remark}[theorem]{Remark}
\newcommand{\R}{\mathbb{R}}
\newcommand{\C}{\mathbb{C}}
\newcommand{\T}{\mathbb{T}}
\newcommand{\N}{\mathbb{N}}
\begin{document}
    \title[On the lgebraic lower bound for the radius of analyticity]{On the algebraic lower bound for the radius of spatial analyticity for the Zakharov-Kuznetsov and modified Zakharov-Kuznetsov equations}
    \author[M. Baldasso and M. Panthee]{ Mikaela Baldasso and Mahendra Panthee}
    \address{Department of Mathematics, University of Campinas\\13083-859 Campinas, SP, Brazil}
\email{mikaelabaldasso@gmail.com, mpanthee@unicamp.br}
%\thanks{This work was partially supported by CAPES, CNPq and FAPESP, Brazil.}

\maketitle

\begin{abstract}
    We consider the initial value problem (IVP) for the 2D generalized Zakharov-Kuznetsov (ZK) equation
\begin{eqnarray*}
\begin{cases}
\partial_{t}u+\partial_{x}\Delta u+\mu \partial_{x}u^{k+1}=0, \, &(x, y) \in \R^2, \, t \in \R,\\
u(x,  y,0)=u_0(x, y),
\end{cases}
\end{eqnarray*}
where  $\Delta=\partial_x^2+\partial_y^2$, $\mu=\pm 1$, $k=1,2$ and the initial data $u_0$ is real analytic in a strip around the $x$-axis of the complex plane and have radius of spatial analyticity $\sigma_0$.  For both $k=1$ and $k=2$, considering a symmetrized version, we prove that there exists $T_0>0$ such that the radius of spatial analyticity of the solution remains the same in the time interval $[-T_0,  T_0]$. We also consider the evolution of the radius of spatial analyticity when the local solution extends globally in time. For the Zakharov-Kuznetsov equation ($k=1$), we prove that, in both focusing ($\mu=1$) and defocusing ($\mu=-1$) cases, and for any $T> T_0$, the radius of analyticity cannot decay faster than $cT^{-4+\epsilon}$, $\epsilon>0$, $c>0$. For the modified Zakharov-Kuznetsov equation ($k=2)$ in the defocusing case ($\mu=-1$), we prove that the radius of spatial analyticity  cannot decay faster than $cT^{-\frac{4}{3}}$, $c>0$, for any $T>T_0$. These results on the algebraic lower bounds  for the evolution of the radius of analyticity improve the ones obtained by Shan and Zhang in \cite{SZ} and by Quian and Shan in \cite{QS} where the authors have obtained lower bounds involving exponential decay.
\end{abstract}

\textit{Keywords:} Zakharov-Kuznetsov equation, Initial value problem, radius of spatial analyticity, Bourgain's spaces, Gevrey spaces, multilinear estimates, almost conserved quantity.

\textit{2020 AMS Subject Classification:} 35A20, 35B40, 35Q35, 35Q53.

\section{Introduction}\label{s:intro}

We consider the initial value problem (IVP) with real analytic initial data for the generalized Zakharov-Kuznetsov (gZK) equation
\begin{eqnarray}\label{eq:gZK}
\begin{cases}
\partial_{t}u+\partial_{x}\Delta u+\mu \partial_{x}u^{k+1}=0, \, &(x,  y) \in \R^2, \, t \in \R,\\
u(x,   y, 0)=u_0(x,  y),
\end{cases}
\end{eqnarray}
where $\Delta=\partial_x^2+\partial_y^2$, $k \geq 1$ and the unknown $u(x,  y ,  t)$ is real-valued. When $k=1$, the equation \eqref{eq:gZK} is commonly referred as the Zakharov-Kuznetsov (ZK) equation, whereas, for $k=2$, it is called as the modified Zakharov-Kuznetsov (mZK) equation. These equations are extentions in two-dimensional space of the well known Korteweg de-Vries (KdV) and the modified KdV (mKdV) equations. The ZK equation was introduced by Zakharov and Kuznetsov  in \cite{ZK} to model the propagation of ion-acoustic waves in magnetic plasma in dimension 3. For a rigorous derivation of the ZK equation from the Euler–Poisson system with magnetic
field we refer to the work of Lannes, Linares and Saut in \cite{LLS}.

It is well-known that the generalized ZK equation possesses Hamiltonian structure and sufficiently smooth solution for the IVP \eqref{eq:gZK} enjoys the mass conservation
\begin{equation}\label{eq:mass conversation}
    M(u(t))=\int_{\R^2} u^2(x,  y,  t)dxdy
\end{equation}
and the energy conservation
\begin{equation}\label{eq:energy conservation}
    E(u(t))=\frac{1}{2}\int_{\R^2}\|\nabla u(x,  y,  t)\|^2dxdy -\frac{\mu}{k+2}\int_{\R^2}u^{k+2}(x,  y, t)dxdy.
\end{equation}

Well-posedness issues of the IVP \eqref{eq:gZK} with given initial data in the classical Sobolev spaces $H^s(\R^2)$ have been extensively studied by several authors. For the ZK equation, Faminskii \cite{F} proved the IVP \eqref{eq:gZK} is locally and globally well-posed in $H^m(\R^2)$, $m\geq 1$ integer, using a regularization technique. Later, Biagioni and Linares \cite{BL-03}  proved the local and global well-posedness in $H^1(\R^2)$ using smoothing estimates for the linear group. These results were improved by Linares and Pastor, in \cite{LP}, where the authors used some dispersive smoothing effects associated to the linear part of the ZK equation obtaining the local well-posedness in $H^s(\R^2)$ for $s>\frac{3}{4}$. By using the Fourier restriction norm method, Gr{\"u}nrock and Herr, in \cite{GH}, and Molinet and Pilod, in \cite{MP}, obtained the local well-posedness in $H^s(\R^2)$ for $s>\frac{1}{2}$. It is worth noticing that the authors in \cite{GH} introduced a linear transformation to obtain a symmetric symbol $\xi^3+\eta^3$ in the linear part of the equation. For this, they considered a linear change of variables $x \mapsto ax+by$ and $y \mapsto ax-by$ with $a=2^{-\frac{2}{3}}$ and $b=3^\frac{1}{2}2^{-\frac{2}{3}}$, so that \eqref{eq:gZK} can be rewritten as
\begin{eqnarray}\label{eq:new gZK}
\begin{cases}
\partial_{t}u+(\partial_{x}^3+\partial_{y}^3)u+\mu a(\partial_{x}+\partial_{y})u^{k+1}=0, \, &(x,   y) \in \R^2, \, t \in \R,\\
u(x,   y, 0)=u_0(x,  y).
\end{cases}
\end{eqnarray}
With this transformation, the linear part becomes symmetric facilitating the use of Bourgain's space framework, but there is a price to pay to deal with an extra derivative in the variable $y$ in the nonlinearity. The optimal local well-posedness result for given data in $H^s(\R^2)$ was obtained by Kinoshita, in \cite{K21}, for $s>-\frac{1}{4}$. The strategy of proof in \cite{K21} relies on the Fourier restriction norm method by proving bilinear estimates and using contraction mapping argument in the  $X^{s,b}$ space introduced by Bourgain \cite{B2}. As a corollary, using the principle of mass conservation, the same author also obtained the global well-posedness in $L^2(\R^2)$. This global result was recently improved for $s>-\frac{1}{13}$ in \cite{SWZ-23} using almost conserved quantity and  the {\em I-method} introduced by I-Team \cite{CKSTT-1, CKSTT-2}.  Finally, we refer \cite{HJ-20} for the local well-posedness result in  $H^{-\frac14}(\R^2)$.

Concerning the IVP associated to the mZK equation, in \cite{LP}, Linares and Pastor proved the local well-posedness in $H^s(\R^2)$ for $s>\frac{3}{4}$. Furthermore, they demonstrated that the problem is ill-posed if $s \leq 0$, in the sense that the data-to-solution map is not uniformly continuous, and hence well-posedness cannot be expected in the critical space $L^2(\R^2)$. After that, Ribaud and Vento \cite{RV}, improved this result for $s>\frac{1}{4}$. Recently, Bhattacharya et al. \cite{BFR} considered the symmetrized version of the mZK equation \eqref{eq:new gZK} and proved that this new equation preserves both mass conservation \eqref{eq:mass conversation} and, in the case $k=2$, modifies the energy conservation to become
\begin{equation}\label{eq:new energy conservation}
    E(u(t))=\frac{1}{2}\int_{\R^2}\|\nabla u(x, y,t)\|^2dxdy-\frac{1}{2}\int_{\R^2}(u_xu_y)(x, y, t)dxdy -\frac{\mu a}{4}\int_{\R^2}u^{4}(x, y,t)dxdy.
\end{equation}
Using this new form of the energy and the I-method, the authors in \cite{BFR} obtained the global well-posedness in $H^s(\R^2)$ for $s>\frac{3}{4}$ in the defocusing case. The best known local well-posedness result for the mZK equation was established by Kinoshita in \cite{K22} where the author extended the result obtained in \cite{RV} to include the index $s=\frac{1}{4}$.

Well-posednes issues and other properties of solutions to the IVP \eqref{eq:gZK}, considering several values of $k\geq 1$ and/or posed on domains other than $\R^2$ and $\R^3$ are extensively studied in the literature, see for example \cite{Br-00, FM-24, FLP-12, Gr-15, LP-11, LS-09, AM-24, AM-23, RT-08, Y-17} and references therein.

In this work we are interested in studying the well-posedness of the IVP \eqref{eq:gZK} for $k=1, 2$ with real analytic initial data $u_0$, i.e., initial data that are analytic in a strip  $$S_{\sigma} =\{x+iy\in\mathbb{C}^2:\, x,  y\in \R^2, \, |y_1|,|y_2|<\sigma\}$$ of $\C^2$ of width $\sigma$. 

For this purpose, we consider $u_0$ in the Gevrey space $G^{\sigma, s}(\R^{2})$, $\sigma>0$ and $s\in \R$, defined as the Banach space endowed with the norm
\begin{equation*}
  \|f\|_{G^{\sigma, s}}=\|e^{\sigma |\gamma|}\langle \gamma\rangle^{s}\widehat{f}(\gamma)\|_{L^{2}_{\gamma}}
\end{equation*}
where  $\gamma=(\xi,\eta)$ denotes the two dimensional spatial variable, $|\gamma|=|\xi|+|\eta|$, $\|\gamma\|=(\xi^2+\eta^2)^{\frac{1}{2}}$ and $\langle \gamma \rangle =(1+\|\gamma\|^2)^{\frac{1}{2}}$. Moreover, $\widehat{f}$ denotes the spatial Fourier transform of $f$,
\begin{equation*}
    \widehat{f}(\gamma)=\frac{1}{2\pi}\int_{\R^2}e^{-i(x\xi+y\eta)}f(x,y)dxdy.
\end{equation*}

For $\sigma=0$, the Gevrey space $G^{0, s}(\R^2)$ simply turns out to be the classical Sobolev space $H^s(\R^2)$. The interest in these spaces is due to the fact that for $\sigma>0$ and $s\in \R$, a function $f\in G^{\sigma, s}(\R^2)$ has a  holomorphic extension $F$ in the strip $S_{\sigma} $ defined above, see for example \cite{S2}. In this sense, $\sigma$ is called the uniform radius of analyticity of $f$.
A natural question concerning the IVP in this space is: given $u_0 \in G^{\sigma, s}$, is it possible to guarantee the existence of solution such that the radius of analyticity remains the same at least for short time? The other questions that naturally arise are: is it possible to extend the local solution to a larger time interval $[-T,T]$ for any $T>0$? And after extending the local solution globally in time, how does the radius of analyticity evolve in time? This sort of questions for the dispersive equations are widely studied in the literature, see for example \cite{BFH, BGK, FP, GK, H, K, SS, S, ST, MW-23} and references therein. See also \cite{HHP, HKS, HG} and references therein for the problems posed on the periodic domain $\T$.

As far as we know, the only results concerning the IVP \eqref{eq:gZK} with initial data in the Gevrey spaces $G^{\sigma,   s}(\R^2)$ are by Shan and Zhang in \cite{SZ} and by Quian and Shan in \cite{QS}. In both works the authors used the method introduced by Bona et al. in \cite{BGK} to obtain multilinear estimates in the Gevrey-Bourgain's spaces and proved the local well-posedness results. For the global well-posedness they followed approximation technique. More precisely, in the first work, \cite{SZ}, the authors proved that for $k\geq 2$ , $\sigma_0>0$ and $s>2$, if $u\in C([0,T],H^{s+1})$, $T \geq 1$, is a solution to the IVP \eqref{eq:gZK} with initial data $u_0 \in G^{\sigma_0,  s+1}(\R^2)$, then $u(x,y,t) \in C([0,T],G^{\frac{\sigma(T)}{2},  s}(\R^2))$, where
\begin{equation}\label{E-lb}
    \sigma(T)=\min\left\lbrace \sigma_0e^{-\delta_1}, cT^{-\frac{3k^2+(2s+8)k+2s+5}{3}}\right\rbrace
\end{equation}
with $\delta_1$ a constant determined by $\|u\|_{H^{ s+1}}$ and $\|u\|_{G^{\sigma_0,  s+1}}$. Note that the  results in \cite{SZ} do not include the ZK equation and the best lower bound for the radius of analyticity for the solutions to the mZK equation is
\begin{equation*}
    \sigma(T)=\min\left\lbrace \sigma_0e^{-\delta_1}, cT^{-(11+\epsilon)}\right\rbrace,
\end{equation*}
which can be inferred taking $k=2$ and $s=2+\epsilon$ in \eqref{E-lb}.

In the second work \cite{QS}, also using the technique from  Bona et al. in \cite{BGK}, the authors established the local well-posedness for the IVP~\eqref{eq:gZK} in the Gevrey space $G^{\sigma_0,  s}(\R^2)$  for $s>0$ when $k=1$ and for $s>1$ when $k\geq 2$. Moreover, they proved that the local solution extends globally in time for $s>2$ and the decay rate for the evolution of the radius of analyticity of the solutions is bounded below by $\sigma_0 e^{-\delta(t)}$, where
\begin{equation*}
    \delta(t)=\int_0^t\left(d_1+d_2\int_0^{t'}\|u(t'')\|_{H^{s+1}}^{k+2}dt''\right)^{k}dt',
\end{equation*}
with $d_1=\|u_0\|_{G^{\sigma_0,s+1}}^2$ and $d_2$ being a constant depending on $s$ and $p$.

Looking at the results explained above, two questions arise naturally.
\begin{enumerate}

\item[1.] Is it possible to obtain the local well-posedness results in $G^{\sigma, s}(\R^2)$ with smaller values of~$s$?
\item[2.] Is it possible to find a better decay rate for the evolution of the radius of analyticity when the local solution extends globally in time, in the sense that the radius decays slower than  the exponential rate?
\end{enumerate}

In this work, we will provide affirmative answers to the questions raised above for the  IVP~\eqref{eq:gZK} with initial data $u_0 \in G^{\sigma,s}(\R^2)$ for $k=1$ and $k=2$.  For this purpose, considering the symmetrized version of the gZK equation \eqref{eq:new gZK}, we will derive bilinear and trilinear estimates in the Gevrey- Bourgain's spaces and prove that, for short time, the solution remains analytic in the same initial strip when $s>-\frac{1}{4}$ for the ZK equation and when $s\geq \frac{1}{4}$ for the mZK equation. Moreover, we will construct almost conserved quantities at the $L^2$- and $H^1$-levels of Sobolev regularities (see \eqref{eq:conserved quantity ZK} and \eqref{eq:conserved quantity mZK} below) in order to extend the local solutions globally in time and to obtain algebraic lower bounds for the radius of analyticity.

Now, we state the main results of this work. Regarding the local well-posedness for the IVP associated to the ZK equation, we first prove the following  result.

\begin{theorem}\label{teo:local ZK}
    Let $\sigma>0$ and $s>-\frac{1}{4}$. For given $u_0 \in G^{\sigma,  s}(\R^2)$, there exists a time
    \begin{equation}\label{eq:T0 ZK}
T_0=\frac{c_0}{(1+\|u_0\|_{G^{\sigma,   s}}^2)^{d}}, \qquad c_0>0, \, d>1,
\end{equation}
 such that the IVP \eqref{eq:new gZK} with $k=1$ admits a unique solution $u \in C([-T_0, T_0]; G^{\sigma,  s}(\R^2))\cap X_{T_0}^{\sigma,  s,   \frac{1}{2}+\epsilon}$, for $0<\epsilon\ll1$ sufficiently small, satisfying
\begin{equation}\label{eq:bound for solution ZK teo}
\|u\|_{X_{T_0}^{\sigma,  s,   \frac{1}{2}+\epsilon}}\leq C\|u_0\|_{G^{\sigma,   s}},
\end{equation}
 where $X_{T_0}^{\sigma,  s,   \frac{1}{2}+\epsilon}$ is defined in \eqref{eq:GB time restriction}, Section \ref{sec:2}.
    
\end{theorem}

Concerning the IVP associated to the mZK equation, we prove the following local result.

\begin{theorem}\label{teo:local mZK}
    Let $\sigma>0$ and $s>\frac{1}{4}$. For given $u_0 \in G^{\sigma,  s}(\R^2)$, there exists 
    exists a time
    \begin{equation}\label{eq:T0 mZK}
T_0=\frac{c_0}{(1+\|u_0\|_{G^{\sigma,   s}}^2)^{d}}, \qquad c_0>0, \,  d>1,
\end{equation}
 such that the IVP \eqref{eq:new gZK} with $k=2$ admits a unique solution $u \in C([-T_0, T_0]; G^{\sigma,  s}(\R^2))\cap X_{T_0}^{\sigma,  s,   \frac{1}{2}+\epsilon}$, for $0<\epsilon\ll1$ sufficiently small, satisfying
\begin{equation}\label{eq:bound for solution mZK teo}
\|u\|_{X_{T_0}^{\sigma,  s,   \frac{1}{2}+\epsilon}}\leq C\|u_0\|_{G^{\sigma,   s}},
\end{equation}
where $X_{T_0}^{\sigma,  s,   \frac{1}{2}+\epsilon}$ is defined in \eqref{eq:GB time restriction}, Section \ref{sec:2}.
\end{theorem}

Our main results concerning the global well-posedness and the evolution of the radius of analyticity for the solutions are the following. For the IVP associated to the ZK equation  we prove the following global result that is valid for both focusing and defocusing cases.

\begin{theorem}\label{teo:global ZK}
    Let $\sigma_0>0$, $s>-\frac{1}{4}$, $u_0 \in G^{\sigma_0, s}(\R^2)$ and $u \in C([-T_0, T_0]; G^{\sigma_0,  s}(\R^2))$ be the local solution to the IVP \eqref{eq:new gZK} with $k=1$ given by Theorem \ref{teo:local ZK}. Then, for any $T\geq T_0$, the local solution $u$ extends globally in time satisfying
    \begin{equation*}
        u\in C([-T, T];G^{\sigma(T),  s}(\R^2)), \quad \text{with } \sigma(T) \geq \min \left\lbrace\sigma_0 , \, cT^{-4+\epsilon}\right\rbrace
    \end{equation*}
for any $\epsilon>0$, where $c$ is a positive constant depending on $s$, $\sigma_0$, $\|u_0\|_{G^{\sigma_0,s}}$ and $\epsilon$.
\end{theorem}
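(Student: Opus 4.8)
I would combine Theorem~\ref{teo:local ZK} with an almost conservation law for a Gevrey‑modified version of the mass \eqref{eq:mass conversation}, and then iterate; since the quantity one controls is the $L^2$‑mass, which is conserved for $\mu=\pm1$ alike, this will cover the focusing and defocusing cases at once. First I would reduce to the $L^2$‑level: for $0<\sigma<\sigma_0$ the Fourier multiplier $e^{-(\sigma_0-\sigma)|\gamma|}\langle\gamma\rangle^{-s}$ is bounded, so $G^{\sigma_0,s}(\R^2)\hookrightarrow G^{\sigma}(\R^2):=G^{\sigma,0}(\R^2)$ (with $\sigma=\sigma_0$ admissible when $s\geq0$), and since Theorem~\ref{teo:local ZK} propagates the $G^{\cdot,s}$‑regularity of the data along the flow with no loss of radius, it suffices to control the $L^2$‑based radius of analyticity. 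Thus assume $u_0\in G^{\sigma_0}(\R^2)$, set $\Lambda:=\|u_0\|_{G^{\sigma_0}}^2$, and, using the time‑reversibility $t\mapsto-t$ of \eqref{eq:new gZK}, work on $t\geq0$.

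\textbf{Almost conservation law.} For $w:=e^{\sigma|\nabla|}u$ (real, $\widehat w(\gamma)=e^{\sigma|\gamma|}\widehat u(\gamma)$) put $A_\sigma(t):=\|w(t)\|_{L^2(\R^2)}^2=\|u(t)\|_{G^{\sigma}}^2$, so that $A_0(t)=M(u_0)$ and $A_\sigma(0)\leq\Lambda$ for $\sigma\leq\sigma_0$. Differentiating along \eqref{eq:new gZK} with $k=1$, the term generated by $\partial_x^3+\partial_y^3$ vanishes by skew‑adjointness, and, after adding and subtracting $w^2$ inside the nonlinearity and using $\int_{\R^2}w(\partial_x+\partial_y)(w^2)\,dx\,dy=\tfrac23\int_{\R^2}(\partial_x+\partial_y)(w^3)\,dx\,dy=0$,
\begin{equation*}
\frac{d}{dt}A_\sigma(t)=2\mu a\int_{\R^2}w\,(\partial_x+\partial_y)\big[w^2-e^{\sigma|\nabla|}(u^2)\big]\,dx\,dy .
\end{equation*}
On the frequency side (with $\gamma_1+\gamma_2+\gamma_3=0$) the commutator $w^2-e^{\sigma|\nabla|}(u^2)$ carries the multiplier $e^{\sigma|\gamma_1|}e^{\sigma|\gamma_2|}-e^{\sigma|\gamma_1+\gamma_2|}$, which by $0\leq|\gamma_1|+|\gamma_2|-|\gamma_1+\gamma_2|\leq 2\min(|\gamma_1|,|\gamma_2|)$ and $1-e^{-x}\leq x^{\theta}$ ($x\geq0$, $\theta\in[0,1]$) is dominated by $(2\sigma)^{\theta}\min(|\gamma_1|,|\gamma_2|)^{\theta}e^{\sigma|\gamma_1|}e^{\sigma|\gamma_2|}$. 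Distributing the exponential weights onto the three factors and integrating over a local interval $[0,\delta]$, the right‑hand side becomes $\sigma^{\theta}$ times a trilinear expression in $w$ with one derivative (from $\partial_x+\partial_y$) and $\theta$ extra derivatives on the lowest frequency; by the bilinear estimates behind Theorem~\ref{teo:local ZK} — available with a derivative surplus down to regularity $-\tfrac14$ — together with \eqref{eq:bound for solution ZK teo}, which bounds $\|w\|$ in the corresponding Bourgain norm by $\|u(t_j)\|_{G^{\sigma}}=A_\sigma(t_j)^{1/2}$, this is $\lesssim\sigma^{\theta}A_\sigma(t_j)^{3/2}$ for any $\theta$ below a threshold essentially equal to $\tfrac14$. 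Hence, on any subinterval $[t_j,t_j+\delta]$ of existence,
\begin{equation*}
\sup_{[t_j,\,t_j+\delta]}A_\sigma(t)\;\leq\;A_\sigma(t_j)+C\sigma^{\theta}A_\sigma(t_j)^{3/2}.
\end{equation*}

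\textbf{Iteration and conclusion.} Given $T\geq T_0$, let $\delta:=c_0(1+4\Lambda)^{-d}$ be the local‑existence time from Theorem~\ref{teo:local ZK} that stays valid whenever $A_\sigma(t_j)\leq4\Lambda$. On the steps $t_j=j\delta$, a standard continuity/bootstrap argument built on the previous inequality keeps $A_\sigma(t_j)\leq4\Lambda$ for every $j$ with $j\delta\leq T$ — so the solution extends to $[0,T]$ with $u(T)\in G^{\sigma}$ — as long as the telescoped error stays below the budget, i.e. roughly $\tfrac{T}{\delta}\,C\sigma^{\theta}(4\Lambda)^{3/2}\lesssim\Lambda$. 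Optimizing this over the admissible $\theta$ (close to $\tfrac14$) and tracking the dependence of the constants on $s,\sigma_0,\|u_0\|_{G^{\sigma_0,s}},\epsilon$ yields $u\in C([-T,T];G^{\sigma(T)}(\R^2))$ with $\sigma(T)\geq cT^{-4+\epsilon}$; in the complementary range of $T$, where $cT^{-4+\epsilon}\geq\sigma_0$, Theorem~\ref{teo:local ZK} itself keeps the radius equal to $\sigma_0$. Undoing the reduction of the first step gives $\sigma(T)\geq\min\{\sigma_0,cT^{-4+\epsilon}\}$. Since nothing above used the sign of $\mu$ — only conservation of mass and the cancellation $\int_{\R^2}(\partial_x+\partial_y)(w^3)=0$ — the conclusion holds in both the focusing and defocusing cases.

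\textbf{Main obstacle.} The crux is the trilinear bound hidden in the almost conservation law: after substituting $w=e^{\sigma|\nabla|}u$, one must show that $w^2-e^{\sigma|\nabla|}(u^2)$, tested against $(\partial_x+\partial_y)w$, is controlled by $\sigma^{\theta}\|w\|_{X^{0,\frac12+\epsilon}}^{3}$ with $\theta$ as large as possible. This requires a careful frequency‑by‑frequency analysis exploiting the sharp symmetrized, anisotropic bilinear estimates for \eqref{eq:new gZK} and the resonance identity $\xi_1^3+\xi_2^3+\xi_3^3=3\xi_1\xi_2\xi_3$ on $\{\xi_1+\xi_2+\xi_3=0\}$: the derivative $\partial_x+\partial_y$ in the nonlinearity eats into the derivative surplus that Kinoshita's threshold $s>-\tfrac14$ makes available at the $L^2$‑level, and squeezing $\theta$ up to essentially $\tfrac14$ — which is what produces the exponent $4$ — is the delicate part. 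The reduction step and the bootstrap iteration are, by comparison, routine.
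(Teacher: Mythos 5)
Your proposal is correct and follows essentially the same route as the paper: reduction to the $L^2$-level via the embedding \eqref{eq:Gegrey embedding}, an almost conservation law for $M_\sigma(t)=\|u(t)\|_{G^{\sigma,0}}^2$ with gain $\sigma^{\theta}$, $\theta<\tfrac14$, and an iteration of the local theory on time steps $\delta\sim(1+\|u_0\|_{G^{\sigma_0,0}}^2)^{-d}$ whose failure condition at $\sigma=\sigma_0$ produces $\sigma(T)\gtrsim T^{-1/\theta}$ and hence $T^{-4+\epsilon}$. The delicate point you flag — getting the commutator bound $\lesssim\sigma^{\theta}\|u\|_{X^{\sigma,0,\frac12+\epsilon}}^3$ with $\theta$ up to $\tfrac14$ while staying compatible with the Bourgain exponent furnished by the local theory — is exactly what the paper settles through the refined bilinear estimate of Lemma \ref{lemma:estimates partial ZK} (with $\epsilon(s)$ chosen so that $\epsilon(-\theta)\le\epsilon(0)$, cf.\ Remark \ref{remark:faixa}) and Lemmas \ref{lemma: estimate B}--\ref{lemma: estimates F} leading to Corollary \ref{cor: almost conserved quantity ZK}.
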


For the IVP associated to the mZK equation, we prove the following global result in the defocusing case.

\begin{theorem}\label{teo:global mZK}
    Let $\sigma_0>0$, $s\geq\frac{1}{4}$, $u_0 \in G^{\sigma_0, s}(\R^2)$ and $u \in C([-T_0, T_0]; G^{\sigma_0,  s}(\R^2))$ be the local solution to the IVP \eqref{eq:new gZK} with $k=2$ in the defocusing case ($\mu=-1$) given by Theorem \ref{teo:local mZK}. Then, for any $T\geq T_0$, the local solution $u$ extends globally in time satisfying
    \begin{equation*}
        u\in C([-T, T];G^{\sigma(T),  s}(\R^2)), \quad \text{with } \sigma(T) \geq \min \left\lbrace\sigma_0 , \, cT^{-\frac{4}{3}}\right\rbrace,
    \end{equation*}
where $c$ is a positive constant depending on $s$, $\sigma_0$ and $\|u_0\|_{G^{\sigma_0, s}}$.
\end{theorem}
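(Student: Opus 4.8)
The plan is to establish an almost conservation law for a Gevrey-modified mass--energy functional at the $H^1$ level and then iterate the local existence time of Theorem~\ref{teo:local mZK}, fixing the analyticity radius $\sigma$ as a negative power of $T$ only at the very end. Since the radius of analyticity does not see the Sobolev index, and since $G^{\sigma_0,s}(\R^2)\hookrightarrow G^{\sigma_1,1}(\R^2)$ for every $\sigma_1<\sigma_0$ (the exponential gap $(\sigma_0-\sigma_1)|\gamma|$ dominates any polynomial in $\langle\gamma\rangle$), with $\sigma_1=\sigma_0$ allowed when $s\ge 1$, I would first reduce to $u_0\in G^{\sigma_1,1}(\R^2)$ and prove $\sigma(T)\ge\min\{\sigma_1,cT^{-4/3}\}$; taking $c>0$ so small that $cT^{-4/3}\le\sigma_1$ for all $T\ge T_0$ then yields the stated bound, and for $s>1$ the excess regularity is carried along by persistence in Theorem~\ref{teo:local mZK}. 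The symmetrized equation \eqref{eq:new gZK} is invariant under $(x,y,t)\mapsto(-x,-y,-t)$, so it is enough to argue on $[0,T]$. For $\sigma\in(0,\sigma_1]$ put $v:=e^{\sigma|D|}u$, where $e^{\sigma|D|}$ is the Fourier multiplier with symbol $e^{\sigma|\gamma|}$, $|\gamma|=|\xi|+|\eta|$, so that $\|u(t)\|_{G^{\sigma,1}}=\|v(t)\|_{H^1}$, and set $M_\sigma(u):=M(v)$, $E_\sigma(u):=E(v)$ with $M,E$ as in \eqref{eq:mass conversation}, \eqref{eq:new energy conservation} --- this is the almost conserved quantity \eqref{eq:conserved quantity mZK}. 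In the defocusing case $\mu=-1$ the quadratic form $v_x^2-v_xv_y+v_y^2$ is positive definite and $\tfrac{a}{4}\int v^4\ge 0$, giving the coercivity $\|v\|_{H^1}^2\le c_1\big(M_\sigma(u)+E_\sigma(u)\big)$; in the other direction, the Sobolev embedding $H^1(\R^2)\hookrightarrow L^4(\R^2)$ gives $M_\sigma(u_0)+E_\sigma(u_0)\lesssim\|v_0\|_{H^1}^2+\|v_0\|_{H^1}^4$, which for $\sigma\le\sigma_1$ is bounded by a constant depending only on $\|u_0\|_{G^{\sigma_1,1}}$, while $M_\sigma(u_0)+E_\sigma(u_0)\ge\|u_0\|_{L^2}^2>0$ uniformly in $\sigma$.

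The heart of the argument is the almost conservation law: there exist $\theta=\tfrac34$ and, for each $R>0$, constants $C=C(R)>0$ and $\tau=\tau(R)>0$ such that any solution of \eqref{eq:new gZK} with $k=2$ on $[t_0,t_0+\tau]$ with $\|v(t_0)\|_{H^1}\le R$ obeys
\begin{equation*}
\sup_{t\in[t_0,t_0+\tau]}\big(M_\sigma(u(t))+E_\sigma(u(t))\big)\le M_\sigma(u(t_0))+E_\sigma(u(t_0))+C\sigma^{\theta}.
\end{equation*}
To prove it I would note that $v$ solves $\partial_t v+(\partial_x^3+\partial_y^3)v+\mu a(\partial_x+\partial_y)v^3=\mu a(\partial_x+\partial_y)R_\sigma[v]$ with $R_\sigma[v]:=v^3-e^{\sigma|D|}\big((e^{-\sigma|D|}v)^3\big)$, and that $M(v)$ and $E(v)$ are exact invariants of the symmetrized equation with zero right-hand side, so that
\begin{equation*}
\frac{d}{dt}\big(M_\sigma(u)+E_\sigma(u)\big)=\mu a\int_{\R^2}\big(2v-\Delta v+\partial_x\partial_y v-\mu a v^3\big)(\partial_x+\partial_y)R_\sigma[v]\,dx\,dy.
\end{equation*}
On the Fourier side $\widehat{R_\sigma[v]}(\gamma)$ carries the factor $1-e^{-\sigma(|\gamma_1|+|\gamma_2|+|\gamma_3|-|\gamma|)}$ on $\gamma=\gamma_1+\gamma_2+\gamma_3$; since $|\gamma|\le|\gamma_1|+|\gamma_2|+|\gamma_3|$ this factor is at most $\sigma^{\theta}\big(|\gamma_1|+|\gamma_2|+|\gamma_3|-|\gamma|\big)^{\theta}\lesssim\sigma^{\theta}\big(\text{the second largest of the }|\gamma_i|\big)^{\theta}$, so the commutator error costs only $\theta$ derivatives and these always fall on a non-maximal frequency. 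Integrating the identity on $[t_0,t_0+\tau]$, estimating the resulting quartic and sextic space-time integrals by the bilinear and trilinear estimates in Bourgain's spaces proved earlier for the symmetrized mZK equation --- the extra $\theta$ derivatives being affordable precisely because they sit on low frequencies --- and using $\|u\|_{X_{\tau}^{\sigma,1,\frac12+\epsilon}}\le C\|v(t_0)\|_{H^1}\le CR$ from Theorem~\ref{teo:local mZK} at $s=1$, yields the bound with $C=C(R)$.

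Granting the almost conservation law, the global statement follows by a continuity/bootstrap argument. Set $A^2:=2c_1\big(M_\sigma(u_0)+E_\sigma(u_0)\big)$, fix $R:=2A$ and let $\tau=\tau(2A)$, $C=C(2A)$; then $A$ and $\tau$ depend only on $\|u_0\|_{G^{\sigma_1,1}}$, and $\tau$ is in particular bounded below by the local existence time \eqref{eq:T0 mZK} associated to data of size $2A$. If $\|v(t)\|_{H^1}\le 2A$ on $[0,t^{*}]$, partition $[0,t^{*}]$ into $\roof{t^{*}/\tau}\le 1+T/\tau$ intervals of length $\le\tau$ and sum the almost conservation law to get $M_\sigma(u(t^{*}))+E_\sigma(u(t^{*}))\le M_\sigma(u_0)+E_\sigma(u_0)+(1+T/\tau)C\sigma^{\theta}$, hence by coercivity $\|v(t^{*})\|_{H^1}^2\le c_1\big(M_\sigma(u_0)+E_\sigma(u_0)\big)+c_1(1+T/\tau)C\sigma^{\theta}$. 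Choosing $\sigma=\sigma(T):=\min\{\sigma_1,cT^{-4/3}\}$ with $c>0$ small enough --- depending on $c_1,C,\tau,\|u_0\|_{G^{\sigma_1,1}}$ and $T_0$ --- that $c_1(1+T/\tau)C\sigma(T)^{3/4}\le c_1\big(M_\sigma(u_0)+E_\sigma(u_0)\big)$ for all $T\ge T_0$ (possible since the right side is $\ge c_1\|u_0\|_{L^2}^2>0$) forces $\|v(t^{*})\|_{H^1}^2\le A^2<(2A)^2$; by continuity of $t\mapsto\|v(t)\|_{H^1}$ this strict improvement propagates, and, re-running the partition on $[0,T]$ and invoking Theorem~\ref{teo:local mZK} on each subinterval, we conclude that $u$ exists on $[-T,T]$ with $u\in C([-T,T];G^{\sigma(T),1}(\R^2))$. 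Since $G^{\sigma(T),1}\hookrightarrow G^{\sigma(T),s}$ for $s\le 1$ (and by persistence for $s>1$) and, with $c$ as chosen, $\sigma(T)=cT^{-4/3}=\min\{\sigma_0,cT^{-4/3}\}$ for $T\ge T_0$ while $\sigma(t)=\sigma_0$ for $|t|\le T_0$ by the local theory, this is precisely the asserted conclusion.

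I expect the main obstacle to be the almost conservation law, and more precisely the quartic and sextic space-time estimates for the commutator error $R_\sigma[v]$ with the \emph{sharp} gain $\sigma^{3/4}$: one must push the multilinear estimates underlying the local theory through while absorbing up to $\tfrac34$ extra derivatives placed on non-maximal frequencies. It is exactly the gap $1-\tfrac14=\tfrac34$ between the $H^1$ energy regularity and the local well-posedness threshold $s=\tfrac14$ of Theorem~\ref{teo:local mZK} that makes $\theta=\tfrac34$ attainable and produces the final exponent $-\tfrac43=-1/\theta$; any shortfall in $\theta$ --- as in the exponential-decay estimates of \cite{QS,SZ} --- would worsen this rate.
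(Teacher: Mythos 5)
Your proposal is correct and follows essentially the same route as the paper: your functional $M_\sigma+E_\sigma$ coincides (up to constants) with the almost conserved quantity \eqref{eq:almost conserved mZK}, your commutator bound with gain $\sigma^{3/4}$ --- exploiting that the error derivatives land on a non-maximal frequency and that the trilinear estimate holds down to $s=\frac14$ --- is exactly the content of Lemma \ref{lemma: estimates G} and Corollary \ref{cor: almost conserved quantity mZK}, and your iteration in time steps of fixed length with $\sigma\sim T^{-4/3}$ chosen at the end reproduces the argument of Section \ref{sec:5}. The only cosmetic differences are that you close the scheme with a continuity/bootstrap argument where the paper argues by induction combined with the maximality of $T^*$, and that you make the reduction to the $H^1$-level regularity via the embedding \eqref{eq:Gegrey embedding} slightly more explicit.
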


As mentioned above, to prove Theorems \ref{teo:global ZK} and \ref{teo:global mZK}, we derive almost conserved quantities in $G^{\sigma, 0}(\R^2)$ and in $G^{\sigma, 1}(\R^2)$ spaces, respectively. For the ZK equation we construct an almost conserved quantity using the conservation law \eqref{eq:mass conversation} and a new bilinear estimate in Gevrey- Bourgain's space, see \eqref{eq:estimate partial derivative ZK} and \eqref{eq:conserved quantity ZK} below. While, for the mZK equation, in the defocusing case,  we construct an almost conserved quantity using the energy conservation in its modified form \eqref{eq:new energy conservation}, see \eqref{eq:conserved quantity mZK}.   Once having the almost conserved quantities at hand, using the general procedure from \cite{S2}, we are able to prove the global results by decomposing any interval of time $[0, T]$ into short subintervals and iterating the local results in each subinterval. During this iteration process there appears restrictions on the growth of the involved norms that provides the lower bound for the evolution of the radius of analyticity $\sigma(T)$.

This paper is organized as follows. In Section \ref{sec:2}, we define the Bourgain's spaces and record some preliminary estimates. The proofs of the local well-posedness results stated in Theorems \ref{teo:local ZK} and \ref{teo:local mZK} are contained in Section \ref{sec:3}. In Section \ref{sec:4}, we derive the almost conserved quantities and find the associated decay estimates. Finally, in Section \ref{sec:5}, we extend the local solutions globally in time and obtain algebraic lower bounds for the radius of analyticity stated in Theorems \ref{teo:global ZK} and \ref{teo:global mZK}.\\

\noindent \textbf{Notations:} 
Throughout this text, we will adopt notations commonly used in the context of partial differential equations. The two dimensional spatial variable pair will be denoted by $(x,   y)$ and its Fourier transform variable by $\gamma=( \xi,   \eta)$. As usual, we denote the time variable by $t$ and its Fourier transform variable by $\tau$. We will adopt the conventions $|\gamma|=|\xi|+|\eta|$, $\|\gamma\|=(\xi^2+\eta^2)^{\frac{1}{2}}$ and $\langle\gamma\rangle =(1+\|\gamma\|^2)^{\frac{1}{2}}$. The symbol $C$ represents various constants that may vary from one line to the next. We use $A \lesssim B$ to indicate an estimate of the form $A \leq cB$ and $A\sim B$ if $A\leq c_1B$ and $B\leq c_2A$.

\section{Function Spaces and multilinear estimates}\label{sec:2}
In this section we discuss the function spaces that will be used throughout this work and derive some multilinear estimates that play crucial role in the proofs.

First, regarding the Gevrey space defined in Section \ref{s:intro}, we have the embedding
\begin{equation}\label{eq:Gegrey embedding}
    G^{\sigma, s}\subset G^{\sigma',  s'} \quad \text{for all } 0<\sigma '<\sigma \text{ and } s, \, s' \in \R,
\end{equation}
and the inclusion is continuous in the sense that there exists a constant $C>0$ depending on $\sigma,\, \sigma ',\, s,\, s'$ such that
\begin{equation}\label{eq:Gevrey continuous embedding}
    \|f\|_{G^{\sigma ', s'}}\leq C\|f\|_{G^{\sigma, s}}.
\end{equation}

In addition to the Gevrey space, we use a space that is a mix between the Gevrey space and the Bourgain's space introduced in \cite{B1} and \cite{B2}. Given $\sigma\geq 0$ and $s,\, b \in \R$, we define the Gevrey-Bourgain's space, denoted by $X^{\sigma,  s, b}(\R^3)$, with the norm
\begin{equation*}
\|u\|_{X^{\sigma,  s, b}}=\|e^{\sigma |\gamma|}\langle \gamma \rangle^{s} \langle \tau - \xi^{3}-\eta^{3} \rangle^{b} \widehat{u}(\xi,  \eta,   \tau)\|_{L^{2}_{\tau,\xi,\eta}},
\end{equation*} 
where $\widehat{u}$ denotes the space-time Fourier transform of $u$. For $\sigma=0$, we recover the classical Bourgain's space  $X^{s, b}(\R^3)$ with the norm given by
\begin{equation*}
\|u\|_{X^{s, b}}=\|\langle \gamma \rangle^{s} \langle \tau - \xi^{3}-\eta^{3} \rangle ^{b} \widehat{u}(\xi,   \eta,  \tau)\|_{L^{2}_{\tau,\xi,\eta}}.
\end{equation*}

For $T>0$ the restrictions of $X^{ s, b}(\R^3)$ and $X^{\sigma,   s, b}(\R^3)$ to a time slab $\R^2 \times (-T,  T)$, denoted by $X_{T}^{ s, b}(\R^3)$ and $X_{T}^{\sigma,   s, b}(\R^3)$, respectively, are Banach spaces when equipped with the norms
\begin{align}
\|u\|_{X_{T}^{s, b}}&=\text{inf}\{\|v\|_{X^{s, b}}: v=u \text{ on } \R^2 \times (-T,  T)\},\label{eq:B time restrictin }\\
\|u\|_{X_{T}^{\sigma,  s, b}}&=\text{inf}\{\|v\|_{X^{\sigma,   s, b}}: v=u \text{ on } \R^2 \times (-T,  T)\}. \label{eq:GB time restriction}
\end{align}

To simplify the exposition we introduce the operator $e^{\sigma |D_{x,y}|}$ given by
\begin{equation*}
\widehat{e^{\sigma |D_{x,y}|}u}(\gamma)=e^{\sigma |\gamma|}\widehat{u}(\gamma)
\end{equation*}
so that, one has
\begin{align}
\|e^{\sigma |D_{x,y}|}u\|_{H^{s}}&=\|u\|_{G^{\sigma, s}}, \label{eq:exponential 0}\\
\|e^{\sigma |D_{x,y}|}u\|_{X^{s,  b}}&=\|u\|_{X^{\sigma, s,  b}}. \label{eq:exponential}
\end{align}
Substituting $u$ by $e^{\sigma |D_{x,y}|}u$, the relation \eqref{eq:exponential} allows us to carry out the properties of $X^{s,   b}$ and $X_T^{s,   b}$ spaces over $X^{\sigma, s,  b}$ and $X_T^{\sigma, s,  b}$ spaces.

Now we record some useful results that will be used in this  work. In the case $\sigma=0$, for the proof of the first lemma below we refer to Section 2.6 of \cite{T} and the second lemma follows by the argument used to prove Lemma 7 in \cite{S}. The proofs for $\sigma>0$ follows analogously using the relation \eqref{eq:exponential}.

\begin{lemma}Let $\sigma \geq 0$, $s \in \R$ and $b>\frac{1}{2}$. Then, $X^{\sigma, s,  b}(\R^3)\subset C(\R,  G^{\sigma,  s}(\R^2))$ and
\begin{equation*}
\sup_{t \in \R} \|u(t)\|_{G^{\sigma,   s}} \leq C \|u\|_{X^{\sigma, s,  b}},
\end{equation*}
where the constant $C>0$ depends only on $b$.
\end{lemma}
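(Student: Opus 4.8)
The plan is to reduce everything to the classical embedding $X^{0,s,b}(\R^3)=X^{s,b}(\R^3)\subset C(\R, H^s(\R^2))$ for $b>\frac12$, and then lift it to the Gevrey scale by conjugating with the Fourier multiplier $e^{\sigma|D_{x,y}|}$ via the identities \eqref{eq:exponential 0} and \eqref{eq:exponential}.

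For the case $\sigma=0$ I would argue directly, as in Section~2.6 of \cite{T}. Denoting by $\widehat{u(t)}$ the spatial Fourier transform of $u(\cdot,t)$ and by $\widehat{u}(\gamma,\tau)$ the space-time one, one has $\widehat{u(t)}(\gamma)=c\int_{\R}e^{it\tau}\widehat u(\gamma,\tau)\,d\tau$; substituting $\lambda=\tau-\xi^3-\eta^3$ gives, for each fixed $\gamma$, that $\langle\gamma\rangle^{s}\widehat{u(t)}(\gamma)=c\,e^{it(\xi^3+\eta^3)}\int_{\R}e^{it\lambda}\langle\gamma\rangle^{s}\widehat u(\gamma,\lambda+\xi^3+\eta^3)\,d\lambda$. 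Applying the Cauchy–Schwarz inequality in $\lambda$ against the weight $\langle\lambda\rangle^{-b}$, which is square-integrable precisely because $b>\frac12$, then squaring, integrating in $\gamma$, and undoing the substitution $\tau=\lambda+\xi^3+\eta^3$ (Jacobian $1$ for fixed $\gamma$) yields $\|u(t)\|_{H^s}\le C_b\|u\|_{X^{s,b}}$ uniformly in $t$, with $C_b$ depending only on $b$. For the continuity of $t\mapsto u(t)$ into $H^s$, the same Cauchy–Schwarz bound applied to $u(t)-u(t')$ gives
\[
\|u(t)-u(t')\|_{H^s}^2\le C_b^2\int_{\R^2}\int_{\R}\big|e^{it\tau}-e^{it'\tau}\big|^2\langle\tau-\xi^3-\eta^3\rangle^{2b}\langle\gamma\rangle^{2s}|\widehat u(\gamma,\tau)|^2\,d\tau\,d\gamma,
\]
and the right-hand side tends to $0$ as $t'\to t$ by dominated convergence, the integrand being bounded by $4\langle\tau-\xi^3-\eta^3\rangle^{2b}\langle\gamma\rangle^{2s}|\widehat u(\gamma,\tau)|^2\in L^1$ since $u\in X^{s,b}$.

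For $\sigma>0$, set $v:=e^{\sigma|D_{x,y}|}u$. By \eqref{eq:exponential} we have $\|v\|_{X^{s,b}}=\|u\|_{X^{\sigma,s,b}}<\infty$, so the case $\sigma=0$ already established gives $v\in C(\R,H^s(\R^2))$ with $\sup_{t}\|v(t)\|_{H^s}\le C\|v\|_{X^{s,b}}$. The operator $e^{-\sigma|D_{x,y}|}$ is an isometry from $H^s(\R^2)$ onto $G^{\sigma,s}(\R^2)$: indeed $\|e^{-\sigma|D_{x,y}|}w\|_{G^{\sigma,s}}=\|e^{\sigma|D_{x,y}|}e^{-\sigma|D_{x,y}|}w\|_{H^s}=\|w\|_{H^s}$ by \eqref{eq:exponential 0}. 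Hence $t\mapsto u(t)=e^{-\sigma|D_{x,y}|}v(t)$ is continuous from $\R$ into $G^{\sigma,s}(\R^2)$, and $\sup_t\|u(t)\|_{G^{\sigma,s}}=\sup_t\|v(t)\|_{H^s}\le C\|v\|_{X^{s,b}}=C\|u\|_{X^{\sigma,s,b}}$, with $C$ depending only on $b$. Since the entire analytic content is the standard Cauchy–Schwarz estimate for $X^{s,b}$, there is no serious obstacle; the only points demanding a little care are the dominated-convergence argument for continuity (as opposed to mere boundedness) and keeping the Fourier-transform normalizations consistent.
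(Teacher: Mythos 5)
Your proposal is correct and follows essentially the same route as the paper, which simply cites Section~2.6 of \cite{T} for the case $\sigma=0$ (the standard Cauchy--Schwarz argument in the modulation variable that you write out) and then transfers the result to $\sigma>0$ via the substitution $u\mapsto e^{\sigma|D_{x,y}|}u$ and the identities \eqref{eq:exponential 0}--\eqref{eq:exponential}, exactly as you do. The only value you add is making the dominated-convergence continuity step and the isometry $e^{-\sigma|D_{x,y}|}\colon H^s\to G^{\sigma,s}$ explicit, which is consistent with the paper's intent.
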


\begin{lemma}\label{lemma:restriction} Let $\sigma \geq 0$, $s \in \R$, $-\frac{1}{2}<b<\frac{1}{2}$ and $T>0$. Then, for any time interval $I \subset [-T,   T]$, we have 
\begin{equation*}
\|\chi_{I}u\|_{X^{\sigma, s,  b}} \leq C \|u\|_{X_{T}^{\sigma, s,  b}},
\end{equation*}
where $\chi_{I}$ is the characteristic function of $I$ and $C>0$ depends only on $b$.
\end{lemma}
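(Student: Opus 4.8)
The plan is to strip away the two structural features of the statement — the Gevrey weight $e^{\sigma|\gamma|}$ and the infimum defining the restriction norm — and reduce everything to the classical one-dimensional fact that multiplication by the characteristic function of an interval is bounded on $H^b(\R)$ whenever $|b|<\tfrac12$, with a bound depending only on $b$. This is essentially the argument behind Lemma 7 in \cite{S}.

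First I would remove the Gevrey weight. Since $e^{\sigma|D_{x,y}|}$ is a Fourier multiplier acting only in the spatial frequencies while $\chi_I=\chi_I(t)$ multiplies only in time, the two commute, $\chi_I\bigl(e^{\sigma|D_{x,y}|}u\bigr)=e^{\sigma|D_{x,y}|}\bigl(\chi_I u\bigr)$; combining this with \eqref{eq:exponential} gives $\|\chi_I u\|_{X^{\sigma,s,b}}=\|\chi_I(e^{\sigma|D_{x,y}|}u)\|_{X^{s,b}}$ and $\|u\|_{X_T^{\sigma,s,b}}=\|e^{\sigma|D_{x,y}|}u\|_{X_T^{s,b}}$, so it suffices to treat $\sigma=0$. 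Then, writing $v:=e^{\sigma|D_{x,y}|}u$ and fixing $\delta>0$, I would choose an extension $w\in X^{s,b}(\R^3)$ with $w=v$ on $\R^2\times(-T,T)$ and $\|w\|_{X^{s,b}}\le\|v\|_{X_T^{s,b}}+\delta$; because $I\subset[-T,T]$ one has $\chi_I v=\chi_I w$ almost everywhere, so it is enough to prove the global bound $\|\chi_I w\|_{X^{s,b}}\le C(b)\,\|w\|_{X^{s,b}}$ for every $w\in X^{s,b}(\R^3)$ and then let $\delta\to 0$.

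For this global bound I would use the isometry $\|w\|_{X^{s,b}}=\|U(-\cdot)\langle D_{x,y}\rangle^{s}w\|_{H^b_tL^2_{x,y}}$, where $U(t)=e^{-t(\partial_x^3+\partial_y^3)}$ is the unitary group of the symmetrized linear equation, so that $\widehat{U(-\cdot)h}(\xi,\eta,\tau)=\widehat h(\xi,\eta,\tau+\xi^3+\eta^3)$. For each fixed $t$ the scalar $\chi_I(t)$ commutes with the spatial multipliers $U(-t)$ and $\langle D_{x,y}\rangle^{s}$, hence $U(-\cdot)\langle D_{x,y}\rangle^{s}(\chi_I w)=\chi_I\cdot\bigl(U(-\cdot)\langle D_{x,y}\rangle^{s}w\bigr)$. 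Setting $F:=U(-\cdot)\langle D_{x,y}\rangle^{s}w\in H^b_tL^2_{x,y}$ and using Plancherel in $(x,y)$, one has $\|F\|_{H^b_tL^2_{x,y}}^2=\iint\|\widetilde F(\xi,\eta,\cdot)\|_{H^b_t}^2\,d\xi\,d\eta$ with $\widetilde F$ the spatial Fourier transform, and $\chi_I$ still acts fibrewise in time after this transform; thus the claim reduces to the scalar estimate $\|\chi_I g\|_{H^b(\R)}\le C(b)\,\|g\|_{H^b(\R)}$, uniformly over the interval $I$.

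This scalar estimate is the crux, and the only step where $|b|<\tfrac12$ enters. For $b=0$ it is trivial. For $0<b<\tfrac12$ I would use the Gagliardo characterization $\|g\|_{H^b}^2\sim\|g\|_{L^2}^2+\iint_{\R^2}|g(x)-g(y)|^2|x-y|^{-1-2b}\,dx\,dy$: writing $\chi_I$ as the difference of two Heaviside functions and exploiting the translation invariance of $H^b$, it reduces to $\chi_{(-\infty,0]}$, and splitting the double integral according to the signs of $x$ and $y$ leaves only $\int_{x>0}|g(x)|^2\bigl(\int_{y<0}|x-y|^{-1-2b}\,dy\bigr)dx\sim\tfrac{1}{2b}\int_{x>0}|g(x)|^2x^{-2b}\,dx$, which is controlled by the seminorm via the fractional Hardy inequality precisely because $2b<1$, with constant depending only on $b$. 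For $-\tfrac12<b<0$ the estimate follows by duality from the range $-b\in(0,\tfrac12)$, since $\chi_I$ is real-valued. I expect this last lemma to be the main obstacle — not because it is deep, but because it is the only place demanding care and because the endpoints $|b|=\tfrac12$ are genuinely excluded (multiplication by a sharp cutoff is unbounded on $H^{1/2}$), which is exactly why the hypothesis reads $-\tfrac12<b<\tfrac12$. Assembling the three reductions yields the lemma with $C=C(b)$.
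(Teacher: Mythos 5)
Your argument is correct and is essentially the paper's own route: the paper simply invokes the argument of Lemma 7 in \cite{S} for $\sigma=0$ (i.e.\ the reduction, via a near-optimal extension and the transform $u\mapsto W(-t)\langle D_{x,y}\rangle^{s}u$, to the one-dimensional bound $\|\chi_I g\|_{H^b(\R)}\leq C(b)\|g\|_{H^b(\R)}$ for $|b|<\tfrac12$) and then passes to $\sigma>0$ through the relation \eqref{eq:exponential}, exactly as you do by commuting $e^{\sigma|D_{x,y}|}$ with the time cutoff. Your write-up just supplies the details (Gagliardo seminorm, fractional Hardy, duality) that the paper leaves to the cited reference.
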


Throughout this paper, $\psi \in C_0^{\infty}(\R)$ will denote a cut-off function such that $0\leq \psi(t) \leq 1$ and
\begin{equation}\label{eq:cut-off}
    \psi(t)=\begin{cases}
        1 &\text{ if } |t|\leq 1,\\
        0 &\text{ if } |t|\geq 2.\\
    \end{cases}
\end{equation}
Also, we define $\psi_T(t)=\psi\left(\frac{t}{T}\right)$ for $T>0$.

Consider the following IVP, for given $F(x,y,t)$ and $u_0(x,y)$,
\begin{eqnarray}\label{eq:Cauchy problem}
\begin{cases}
\partial_{t}u+(\partial_{x}^{3}+\partial_{y}^{3})u=F, \\
u(x,y,0)=u_0(x,y).
\end{cases}
\end{eqnarray}
Using the Duhamel's formula we may write the IVP \eqref{eq:Cauchy problem} in its equivalent integral equation form as
\begin{equation*}
u(t)=W(t)u_0-\int_0^t W(t-t') F(t')dt',
\end{equation*}
where $W(t)=e^{-t(\partial_{x}^{3}+\partial_{y}^{3})}=e^{it(D_x^{3}+D_y^{3})}$ is the semigroup associated to the linear problem. The semigroup $W(t)$ satisfies the following estimates in the $X^{\sigma, s,  b}$ spaces. For a detailed proof we refer to \cite{GTV} and \cite{QS}.

\begin{lemma}\label{lemma: semigroup estimates}Let $\sigma \geq 0$, $s \in \R$ and $\frac{1}{2}<b<b'<1$. Then, for all $0 <T\leq 1$, there is a constant $C=C(s,b)$ such that
\begin{equation}\label{eq:semigroup 1}
\|\psi(t)W(t)f(x, y)\|_{X^{\sigma, s,  b}} \leq C\|f\|_{G^{\sigma,   s}},
\end{equation}
and
\begin{equation}\label{eq:semigroup 2}
\left\| \psi_T(t) \int_{0}^{t}W(t-t')f(x, y,t')dt'\right\|_{X^{\sigma, s,  b}} \leq CT^{b'-b}\|f\|_{X^{\sigma, s,  b'-1}}.
\end{equation}
\end{lemma}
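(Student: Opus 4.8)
The plan is to reduce both estimates to the classical case $\sigma=0$ and then run the standard Bourgain-space linear estimates. Since $e^{\sigma|D_{x,y}|}$ is a spatial Fourier multiplier, it commutes with $W(t)$ and with the map $f\mapsto\psi_T(t)\int_0^tW(t-t')f(t')\,dt'$; combined with \eqref{eq:exponential 0} and \eqref{eq:exponential} (replace $f$ by $e^{\sigma|D_{x,y}|}f$), this shows it suffices to prove \eqref{eq:semigroup 1} and \eqref{eq:semigroup 2} for the ordinary space $X^{s,b}(\R^3)$ built on the phase $\phi(\gamma)=\xi^3+\eta^3$.

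For \eqref{eq:semigroup 1} I would simply compute. The space-time Fourier transform of $\psi(t)W(t)f$ is $\widehat\psi(\tau-\phi(\gamma))\,\widehat f(\gamma)$, so
\[
\|\psi(t)W(t)f\|_{X^{s,b}}^2=\int_{\R^2}\langle\gamma\rangle^{2s}|\widehat f(\gamma)|^2\Big(\int_\R\langle\tau-\phi(\gamma)\rangle^{2b}|\widehat\psi(\tau-\phi(\gamma))|^2\,d\tau\Big)\,d\gamma .
\]
The change of variables $\mu=\tau-\phi(\gamma)$ decouples the inner integral into $\int_\R\langle\mu\rangle^{2b}|\widehat\psi(\mu)|^2\,d\mu$, which is finite because $\psi\in C_0^\infty(\R)$ forces $\widehat\psi$ to be Schwartz. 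This gives \eqref{eq:semigroup 1} (and needs neither $T\le1$ nor $b>\tfrac12$).

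For \eqref{eq:semigroup 2} I would start from Duhamel's formula: writing $v(t)=\int_0^tW(t-t')f(t')\,dt'$ and $\phi(\gamma)=\xi^3+\eta^3$, one has the Fourier representation
\[
\widehat{v}(\gamma,t)=\int_\R\frac{e^{it\tau'}-e^{it\phi(\gamma)}}{i(\tau'-\phi(\gamma))}\,\widehat f(\gamma,\tau')\,d\tau' ,
\]
where on the left $\widehat v$ is the spatial, and on the right $\widehat f$ the space-time, Fourier transform. Then I would split the $\tau'$-integral at the scale $|\tau'-\phi(\gamma)|\sim T^{-1}$ into a near-resonant region $R_1$ and a non-resonant region $R_2$. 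On $R_1$, Taylor-expand $e^{it(\tau'-\phi)}-1=\sum_{n\ge1}(it(\tau'-\phi))^n/n!$, so that this part of $\psi_T v$ is $\sum_{n\ge1}\frac{i^{n-1}}{n!}\psi_T(t)\,t^n\,W(t)g_n$ with $\widehat{g_n}(\gamma)=\int_{R_1}(\tau'-\phi)^{n-1}\widehat f(\gamma,\tau')\,d\tau'$; using the scaling bound $\|\psi_T(\cdot)(\cdot)^n\|_{H^b_t}\lesssim T^{n+\frac12-b}\|\psi(\cdot)(\cdot)^n\|_{H^1}$ (valid for $T\le1$, $0\le b\le1$), the product estimate for an $X^{s,b}$ function times a smooth time factor, \eqref{eq:semigroup 1}, and Cauchy--Schwarz in $\tau'$ over $\{|\tau'-\phi|\le T^{-1}\}$ (which produces a factor $T^{b'-n-\frac12}$), each term is bounded by $\frac{C^n}{n!}T^{b'-b}\|f\|_{X^{s,b'-1}}$ and the series converges. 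On $R_2$, write $\frac{e^{it\tau'}-e^{it\phi}}{i(\tau'-\phi)}=\frac{e^{it\tau'}}{i(\tau'-\phi)}-\frac{e^{it\phi}}{i(\tau'-\phi)}$; the second term equals $W(t)h$ with $\widehat h(\gamma)=\int_{R_2}\widehat f(\gamma,\tau')/(i(\tau'-\phi))\,d\tau'$, and since $\int_{R_2}\langle\tau'-\phi\rangle^{-2b'}\,d\tau'\lesssim T^{2b'-1}$ (here $b'>\tfrac12$ is used), Cauchy--Schwarz gives $\|h\|_{H^s}\lesssim T^{b'-\frac12}\|f\|_{X^{s,b'-1}}$, whence by \eqref{eq:semigroup 1} and the $X^{s,b}$-scaling of $\psi_T W$ the factor $T^{b'-b}$ reappears; the first term, whose $t$-Fourier transform carries the kernel $T\widehat\psi(T(\tau-\tau'))$, is handled by a direct Cauchy--Schwarz estimate exploiting the rapid decay of $\widehat\psi$ together with the lower bound $|\tau'-\phi|>T^{-1}$ on $R_2$.

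The hard part will be the bookkeeping of powers of $T$ in \eqref{eq:semigroup 2}: one must split at the scale $T^{-1}$ (not at $O(1)$) precisely so that the smallness $T^{n+\frac12-b}$ coming from the time cutoff on $R_1$ cancels the growth $T^{b'-n-\frac12}$ coming from the $\tau'$-integral, and likewise so that $T^{\frac12-b}$ from the cutoff on $R_2$ pairs with $T^{b'-\frac12}$ from the non-resonant integral. The constraints $\tfrac12<b<b'<1$ are exactly what guarantee convergence of the $\tau$- and $\tau'$-integrals involved and a strictly positive surviving power $T^{b'-b}$. Everything else is routine, and for the detailed computations I would follow \cite{GTV} (see also \cite{QS}).
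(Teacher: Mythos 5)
Your proposal is correct and follows essentially the same route as the paper, which reduces matters to the case $\sigma=0$ via the substitution $u\mapsto e^{\sigma|D_{x,y}|}u$ (the relations \eqref{eq:exponential 0}--\eqref{eq:exponential}) and then invokes the standard linear estimates, referring to \cite{GTV} and \cite{QS} for the details you reconstruct (direct computation for \eqref{eq:semigroup 1}, and the near/non-resonant splitting at scale $T^{-1}$ with Taylor expansion for \eqref{eq:semigroup 2}). Your bookkeeping of the powers of $T$ and of the conditions $\tfrac12<b<b'<1$ matches the classical argument, so no changes are needed.
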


We also recall the following well-known classical inequality for the exponential function 
\begin{equation}\label{eq:exp function}
    e^x-1\leq x^{\alpha}e^x, \quad \forall \,   x\geq 0 \text{ and } \alpha \in [0,  1].
\end{equation}
The following result is a consequence of \eqref{eq:exp function}.
\begin{lemma}\label{lemma: inequality for e}
For $\sigma>0$, $\theta \in [0,1]$ and $x,   y \in \R^2$
\begin{equation*}
    e^{\sigma|x|}e^{\sigma|y|}-e^{\sigma|x+y|}\leq [2\sigma \min(|x|,|y|)]^{\theta}e^{\sigma|x|}e^{\sigma|y|}.
\end{equation*}    
\end{lemma}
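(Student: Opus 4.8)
The plan is to reduce the two-variable statement to the single-variable inequality \eqref{eq:exp function} by a direct algebraic manipulation. Write $a=|x|$, $b=|y|$ and recall that $|\cdot|$ here denotes the $\ell^1$-norm on $\R^2$, so by the triangle inequality $|x+y|\le |x|+|y|=a+b$, and consequently $|x|+|y|-|x+y|\ge 0$. The left-hand side factors as
\begin{equation*}
 e^{\sigma|x|}e^{\sigma|y|}-e^{\sigma|x+y|}=e^{\sigma|x+y|}\left(e^{\sigma(|x|+|y|-|x+y|)}-1\right),
\end{equation*}
so everything hinges on estimating the quantity $e^{\sigma r}-1$ with $r:=|x|+|y|-|x+y|\ge 0$.

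First I would apply \eqref{eq:exp function} with $x$ replaced by $\sigma r$ and $\alpha=\theta\in[0,1]$, obtaining
\begin{equation*}
 e^{\sigma r}-1\le (\sigma r)^{\theta}e^{\sigma r}.
\end{equation*}
Multiplying back by $e^{\sigma|x+y|}$ and using $e^{\sigma|x+y|}e^{\sigma r}=e^{\sigma(|x|+|y|)}=e^{\sigma|x|}e^{\sigma|y|}$ gives
\begin{equation*}
 e^{\sigma|x|}e^{\sigma|y|}-e^{\sigma|x+y|}\le (\sigma r)^{\theta}\,e^{\sigma|x|}e^{\sigma|y|}.
\end{equation*}
It then remains to bound $r=|x|+|y|-|x+y|$ by $2\min(|x|,|y|)$. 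Since the map $t\mapsto t^{\theta}$ is increasing on $[0,\infty)$ for $\theta\ge 0$, this will finish the proof.

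The one genuinely substantive point — and the only place I expect to need care — is the inequality $|x|+|y|-|x+y|\le 2\min(|x|,|y|)$. Assume without loss of generality $|x|\le |y|$ (i.e. $\min(|x|,|y|)=|x|$). By the (reverse) triangle inequality $|x+y|\ge |y|-|x|$, hence
\begin{equation*}
 |x|+|y|-|x+y|\le |x|+|y|-(|y|-|x|)=2|x|=2\min(|x|,|y|).
\end{equation*}
This holds for any norm, in particular for $|\cdot|=|\xi|+|\eta|$ on $\R^2$, so no special structure of the $\ell^1$-norm is needed. Combining the three displays yields
\begin{equation*}
 e^{\sigma|x|}e^{\sigma|y|}-e^{\sigma|x+y|}\le \big[2\sigma\min(|x|,|y|)\big]^{\theta}e^{\sigma|x|}e^{\sigma|y|},
\end{equation*}
which is the claimed bound. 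The argument is entirely elementary; the main obstacle is simply recognizing the factorization that exposes \eqref{eq:exp function} and then invoking the reverse triangle inequality in the right direction.
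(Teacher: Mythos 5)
Your proof is correct, and it takes a genuinely different and substantially shorter route than the paper's. The paper proves the lemma by an extensive case analysis on the signs of the components $x_1,x_2,y_1,y_2$ (Cases 1--3 with several sub-cases), essentially extending the one-dimensional argument of Selberg--Da Silva to two dimensions by hand; in each sub-case the inequality \eqref{eq:exp function} is applied to a specific exponent that is identified only after the sign pattern is fixed. You instead isolate the quantity $r=|x|+|y|-|x+y|\ge 0$ once and for all via the factorization
\begin{equation*}
e^{\sigma|x|}e^{\sigma|y|}-e^{\sigma|x+y|}=e^{\sigma|x+y|}\bigl(e^{\sigma r}-1\bigr),
\end{equation*}
apply \eqref{eq:exp function} to $\sigma r$, and then bound $r\le 2\min(|x|,|y|)$ by the reverse triangle inequality, which is valid for any norm (in particular for the $\ell^1$ expression $|\gamma|=|\xi|+|\eta|$ used in the paper). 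All three steps check out: the factorization is exact, $e^{\sigma|x+y|}e^{\sigma r}=e^{\sigma|x|}e^{\sigma|y|}$ restores the right-hand side, monotonicity of $t\mapsto t^{\theta}$ converts the bound on $r$ into the stated constant, and no sign information on the components is ever needed. What your approach buys is brevity, dimension-independence, and norm-independence (the same proof works verbatim in $\R^n$ and for the Euclidean norm); what the paper's case-by-case approach buys is only that it stays close to the one-dimensional precedent it cites. Your argument could replace the paper's proof with no loss of generality.
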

\begin{proof}
In one dimensional case, i.e., for $x,y \in \R$, the proof can be found in \cite{SS}. 

For $x,y \in \R^2$, we will use the notations $x=(x_1,x_2)$, $y=(y_1,y_2)$. With these notations, we have to show
    \begin{equation}\label{eq: proof e}
        e^{\sigma(|x_1|+|x_2|+|y_1|+|y_2|)}-e^{\sigma(|x_1+y_1|+|x_2+y_2|)}\leq [2\sigma \min(|x_1|+|x_2|,|y_1|+|y_2|)]^{\theta}e^{\sigma(|x_1|+|x_2|+|y_1|+|y_2|)}.
    \end{equation}
    We separate the analysis in several cases depending on the signs of $x_1,x_2,y_1$ and $y_2$. Note that, it suffices to prove \eqref{eq: proof e} under the conditions
    \begin{itemize}
        \item $x_1$ and $y_1$ have the same signs and $x_2$ and $y_2$ have the same signs,
        \item $x_1\geq 0$, $y_1\leq 0$ and $x_2\geq 0$, $y_2\geq 0$,
        \item $x_1\geq 0$, $y_1\leq 0$ and $x_2\geq 0$, $y_2\leq 0$,
    \end{itemize}
since the other cases follow using symmetry of the norms involved.

\noindent {\bf Case 1:} \fbox{$x_1$ and $y_1$, and $x_2$ and $y_2$ have the same signs.} 
In this case, the left hand side of \eqref{eq: proof e} is equal to $0$ since $|x_1+y_1|=|x_1|+|y_1|$ and $|x_2+y_2|=|x_2|+|y_2|$ and the inequality is obvious.

\noindent {\bf Case 2:} \fbox{$x_1\geq 0$, $y_1\leq 0$ and $x_2\geq 0$, $y_2\geq 0$.}
In this case, one has $|x_2+y_2|=|x_2|+|y_2|$ and it is enough to show 
        \begin{equation}\label{eq: proof e 1}
            e^{\sigma(|x_1|+|y_1|)}-e^{\sigma|x_1+y_1|}\leq [2\sigma \min(|x_1|+|x_2|,|y_1|+|y_2|)]^{\theta}e^{\sigma(|x_1|+|y_1|)}.
        \end{equation}
        For this purpose, we separate the analysis in two sub-cases depending on the size of $x_1$ and $y_1$.

\noindent {\bf Sub-case 2.1:} \fbox{ $|y_1|\leq |x_1|$.} 
In this sub-case, one has $x_1+y_1\geq 0$ and using the estimate \eqref{eq:exp function}, the left side of \eqref{eq: proof e 1} becomes
            \begin{equation*}
                \begin{split}
                     e^{\sigma(|x_1|+|y_1|)}-e^{\sigma|x_1+y_1|}&=e^{\sigma(x_1-y_1)}-e^{\sigma(x_1+y_1)}\\
                     &=e^{\sigma(x_1+y_1)}(e^{-2\sigma y_1}-1)\\
                    &\leq e^{\sigma(x_1+y_1)}(2\sigma |y_1|)^{\theta}e^{-2\sigma y_1}\\
                    &=(2\sigma |y_1|)^{\theta}e^{\sigma(|x_1|+|y_1|)}.
                \end{split}
            \end{equation*}
             Since $|y_1|\leq |x_1|\leq |x|$ and $|y_1|\leq |y|$, one has
            \begin{equation*}
                 e^{\sigma(|x_1|+|y_1|)}-e^{\sigma|x_1+y_1|}\leq [2\sigma \min(|x|,|y|)]^{\theta}e^{\sigma(|x_1|+|y_1|)}.
            \end{equation*}

\noindent {\bf Sub-case 2.2:} \fbox{  $|y_1|\geq |x_1|$.}
In this sub-case, $-x_1\leq 0$, $-y_1\geq 0$ and $|-x_1|\leq |-y_1|$. Consequently, by symmetry of $|x_1|, |y_1|$ and $|x_1+y_1|$, using the \textbf{Sub-case 2.1}, one has
        \begin{equation*}
        \begin{split}
        e^{\sigma(|x_1|+|y_1|)}-e^{\sigma|x_1+y_1|}&=e^{\sigma(|-x_1|+|-y_1|)}-e^{\sigma|(-x_1)+(-y_1)|}\\
        &\leq [2\sigma \min(|x|,|y|)]^{\theta}e^{\sigma(|-x_1|+|-y_1|)}\\
        &=[2\sigma \min(|x|,|y|)]^{\theta}e^{\sigma(|x_1|+|y_1|)}.
        \end{split}
        \end{equation*}

\noindent {\bf Case 3:}  \fbox{ $x_1 \geq 0$, $y_1 \leq 0$ and $x_2\geq 0$, $y_2\leq 0$.} We separate the analysis of this case into two sub-cases depending on the size of $x_1, y_1, x_2$ and $y_2$.

\noindent {\bf Sub-case 3.1:} \fbox{$|y_2|\leq |x_2|$.} With this consideration, one has $x_2+y_2 \geq 0$ and we separate the analysis in two further sub-cases.

\noindent {\bf Sub-case 3.1.1:} \fbox{$|y_1|\leq|x_1|$.} In this case, one has $x_1+y_1\geq 0$ and using the estimate \eqref{eq:exp function}, the left side of \eqref{eq: proof e 1} becomes
                \begin{equation*}
                \begin{split}
                     e^{\sigma(|x_1|+|x_2|+|y_1|+|y_2|)}-e^{\sigma(|x_1+y_1|+|x_2+y_2|)}&=e^{\sigma(x_1+x_2-y_1-y_2)}-e^{\sigma(x_1+y_1+x_2+y_2)}\\
                     &=e^{\sigma(x_1+y_1+x_2+y_2)}(e^{-2\sigma(y_1+y_2)}-1)\\
                    &\leq e^{\sigma(x_1+y_1+x_2+y_2)}(2\sigma |y_1+y_2|)^{\theta}e^{-2\sigma( y_1+y_2)}\\
                    &\leq [2\sigma \min(|x|,|y|)]^{\theta}e^{\sigma(|x_1|+|x_2|+|y_1|+|y_2|)}.
                    \end{split}
                \end{equation*}

\noindent {\bf Sub-case 3.1.2:} \fbox{ $|y_1|\geq|x_1|$.} Since $-x_1\leq 0$, $-y_1\geq 0$ and $|-x_1|\leq|-y_1|$, the result follows by \textbf{Sub-case 3.1.1} by the argument used in \textbf{Sub-case 2.2}.

\noindent {\bf Sub-case 3.2:} \fbox{$|y_2|\geq |x_2|$.} 
One has $-x_2\leq 0$, $-y_2\geq 0$ and $|-x_2|\leq|-y_2|$ and the result follows from \textbf{Sub-case 3.1}.

\end{proof}
\vspace{0.2cm}
We recall the following well known Strichartz type estimate from \cite{BFR}
\begin{equation}\label{eq:strichartz 1 mZK}
\|u\|_{L_{t,x,y}^5} \leq C\|u\|_{X^{0,b}}, \text{ for all } b>\frac{1}{2}. 
\end{equation}
Moreover, for $p \in (5,\infty)$, we have the estimate
\begin{equation}\label{eq:strichartz 2 mZK}
\|u\|_{L_{t,x,y}^p} \leq C \|D^{\alpha(p)}u\|_{X^{0,b}}, \text{ for all } b>\frac{1}{2},
\end{equation}
where $\alpha(p)=(1+)\left(\frac{p-5}{p}\right)$ and $1+=1+\epsilon$.

The following result is immediate using the generalized Hölder inequality followed by \eqref{eq:strichartz 1 mZK} and \eqref{eq:strichartz 2 mZK} with $p=10$.
\begin{lemma} \label{lemma: L2 norm of product mZK}
For $b >\frac{1}{2}$, we have
\begin{equation*}
\|u_1 u_2 u_3\|_{L_{t,x,y}^2} \leq C \|u_1\|_{X^{0,b}}\|u_2\|_{X^{0,b}}\|u_{3}\|_{X^{\frac{1}{2}+,b}}. 
\end{equation*}
\end{lemma}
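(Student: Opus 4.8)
The plan is to deduce Lemma \ref{lemma: L2 norm of product mZK} directly from the generalized Hölder inequality in the space-time variables followed by the two Strichartz-type estimates \eqref{eq:strichartz 1 mZK} and \eqref{eq:strichartz 2 mZK}. First I would split the exponents: since $\frac15+\frac15+\frac1{10}=\frac12$, the generalized Hölder inequality gives
\begin{equation*}
\|u_1u_2u_3\|_{L^2_{t,x,y}}\leq \|u_1\|_{L^5_{t,x,y}}\|u_2\|_{L^5_{t,x,y}}\|u_3\|_{L^{10}_{t,x,y}}.
\end{equation*}
Then I would apply \eqref{eq:strichartz 1 mZK} to the first two factors, obtaining $\|u_i\|_{L^5_{t,x,y}}\leq C\|u_i\|_{X^{0,b}}$ for $i=1,2$ and any $b>\frac12$.

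For the third factor I would invoke \eqref{eq:strichartz 2 mZK} with $p=10$. Here $\alpha(10)=(1+)\cdot\frac{10-5}{10}=\frac12+$, so
\begin{equation*}
\|u_3\|_{L^{10}_{t,x,y}}\leq C\|D^{\frac12+}u_3\|_{X^{0,b}}=C\|u_3\|_{X^{\frac12+,b}},
\end{equation*}
the last identity being just the definition of the $X^{s,b}$ norm (the weight $\langle\gamma\rangle^{\frac12+}$ applied to $\widehat{u_3}$ is the same as taking $D^{\frac12+}$ inside the $X^{0,b}$ norm, up to the usual harmless comparison of $\langle\gamma\rangle$ with $|\gamma|$). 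Multiplying the three bounds yields the claimed estimate.

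The argument is essentially routine; the only point requiring a little care is the bookkeeping of the $\epsilon$-loss, i.e., checking that $\alpha(10)$ is exactly $\frac12+\epsilon$ so that the derivative landing on $u_3$ matches the Sobolev index $\frac12+$ appearing in the statement, and that the same $b>\frac12$ can be used uniformly in all three Strichartz applications. There is no genuine obstacle here — the lemma is a direct corollary, as the text already indicates — so I would simply record the three-line computation above.
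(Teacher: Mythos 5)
Your argument is correct and coincides with the paper's own proof, which consists precisely of the generalized H\"older inequality with exponents $\tfrac15+\tfrac15+\tfrac1{10}=\tfrac12$ followed by \eqref{eq:strichartz 1 mZK} for the first two factors and \eqref{eq:strichartz 2 mZK} with $p=10$ (so $\alpha(10)=\tfrac12+$) for the third. The only point to note is that $\|D^{\frac12+}u_3\|_{X^{0,b}}$ is bounded by $\|u_3\|_{X^{\frac12+,b}}$ since $|\gamma|^{\frac12+}\leq\langle\gamma\rangle^{\frac12+}$, which is the harmless comparison you already flag.
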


Now, we move to derive the bilinear and trilinear estimates that are key for obtaining the local well-posedness results and also the almost conserved quantities that are crucial in proving the global  results. 

For this purpose, we will use the Littlewood-Paley theory and  introduce an equivalent definition of  Bourgain's spaces in terms of dyadic decomposition. Let $N,L\geq 1$ be dyadic numbers, i.e., there exist $n_1, n_2 \in \N_0$ such that $N=2^{n_1}$ and $L=2^{n_2}$, and let $\psi \in C_0^{\infty}((-2,2))$ be an even cut-off function defined in \eqref{eq:cut-off}. Letting $\psi_1(t):=\psi(t)$ and $\psi_{N}(t):=\psi(tN^{-1})-\psi(2tN^{-1})$ for $N\geq 2$, the equality $\sum_{N}\psi_N(t)=1$ holds. Here we used $\sum_{N}=\sum_{N\in 2^{\N_0}}$. We define the frequency and modulation projections $P_N$ and $Q_L$ via Fourier transform by
\begin{equation*}
    \begin{split}
        \widehat{P_Nu}(\xi,\eta)&:=\psi_N(\|(\xi,\eta)\|)\widehat{u}(\xi,\eta,\tau)\\
        \widehat{Q_Lu}(\xi,\eta)&:=\psi_L(\tau-\xi^3-\eta^3)\widehat{u}(\xi,\eta,\tau).
    \end{split}
\end{equation*}
For $s,b \in \R$, we define the equivalent $X^{s,b}(\R^3)$ spaces with norm given by
\begin{equation*}
\|f\|_{X^{s,b}}=\left(\sum_{N,L}N^{2s}L^{2b}\|Q_LP_Nf\|_{L_{t,x,y}^2}^2\right)^{\frac{1}{2}}.
\end{equation*}

In this setting, we recall the following Strichartz estimate from \cite{K21}.

\begin{lemma}\label{lemma: Strichartz Q_L} For $p\geq 4$ and $\frac{2}{p}+\frac{2}{q}=1$,
\begin{equation*}
    \|Q_Lu\|_{L_t^pL_{x,y}^q}\leq C L^{\frac{2}{3p}+\frac{1}{q}}\|Q_Lu\|_{L_{t,x,y}^2}.
\end{equation*}   
\end{lemma}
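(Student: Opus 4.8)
The plan is to reduce the estimate to the $L^4_{t,x,y}$ case via interpolation with the trivial $L^2_{t,x,y}$ bound, and to obtain the $L^4_{t,x,y}$ endpoint from the classical dispersive/Strichartz machinery for the symmetrized operator $W(t)=e^{it(D_x^3+D_y^3)}$. First I would record that on the modulation‑localized piece $Q_Lu$ one may freely insert $Q_L$, since it is a Fourier multiplier of norm $1$ on every $L^2$ space, and write $Q_Lu(t)=\int W(t-t')\,g(t')\,dt'$‑type decompositions after removing the group: more precisely, setting $v=W(-t)Q_Lu$ so that $\widehat{v}$ is supported in $\{|\tau|\lesssim L\}$ in the new time variable, the bound becomes a statement about functions with time‑frequency support of size $L$. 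Then $Q_Lu(t) = W(t)v(t)$ and one estimates $\|W(t)v(t)\|_{L^p_tL^q_{x,y}}$.

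The core input is the fixed‑time decay estimate for $W(t)$: because the symbol $\xi^3+\eta^3$ has nonvanishing Hessian determinant away from the degenerate lines, stationary phase gives $\|W(t)f\|_{L^\infty_{x,y}}\lesssim |t|^{-2/3}\|f\|_{L^1_{x,y}}$ for the relevant frequency‑localized pieces (this is exactly the decay underlying the $L^5$ Strichartz estimate \eqref{eq:strichartz 1 mZK} quoted from \cite{BFR}). Combined with $L^2$ conservation and the Hardy–Littlewood–Sobolev inequality in the usual $TT^*$ argument, this yields the scale of estimates $\|W(t)f\|_{L^p_tL^q_{x,y}}\lesssim \|f\|_{L^2}$ for the sharp admissible pairs $\tfrac{2}{p}+\tfrac{2}{q}=1$, $p\ge 4$ — but one loses at the endpoint because of the line degeneracy, which is precisely why a modulation loss $L^{\frac{2}{3p}+\frac1q}$ is needed. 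The mechanism producing the exponent $\frac{2}{3p}+\frac1q$ is: on the $L$‑thickened characteristic hypersurface one pays $L^{1/2}$ in $L^2_t$ relative to the sharp estimate, which after interpolating the $p=\infty$ endpoint ($\|W(t)v\|_{L^\infty_tL^2_{x,y}}\lesssim\|v\|$, no loss) against the $p=4$ endpoint with its loss, and tracking how the $L^1\to L^\infty$ decay rate $|t|^{-2/3}$ enters, gives exactly $L^{\frac{2}{3p}}\cdot L^{1/q}$; concretely, a Bernstein‑type bound in $\tau$ on $Q_Lu$ produces $L^{1/p-1/2}$, and combining with the sharp $\frac2p+\frac2q=1$ estimate rearranges to the stated exponent. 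I would lay this out by first proving it for $p=4$, $q=4$ (where $\frac{2}{3p}+\frac1q=\frac16+\frac14=\frac{5}{12}$), then noting the endpoint $p=\infty$, $q=2$ is the trivial $L^2$ bound with exponent $\frac1q=\frac12$, and interpolating.

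The main obstacle is the $L^4$ endpoint together with the sharp constant bookkeeping: establishing $\|Q_Lu\|_{L^4_{t,x,y}}\lesssim L^{5/12}\|Q_Lu\|_{L^2_{t,x,y}}$ honestly requires the $|t|^{-2/3}$ dispersive decay for $W(t)$, an honest $TT^*$/$L^2$‑restriction argument on the $L$‑neighborhood of the surface $\tau=\xi^3+\eta^3$, and care near the degenerate directions $\xi=0$ or $\eta=0$ where the Hessian drops rank (this is where the $L^{1/p}$‑type modulation loss genuinely arises and cannot be avoided). Since this is precisely Lemma 2.4 (or its analogue) in \cite{K21}, I would in fact \emph{cite} \cite{K21} for the $\sigma=0$ statement and simply remark that no modification is needed here — the lemma as stated involves no Gevrey weight. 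If a self‑contained argument were demanded, the plan above (dispersive decay $\Rightarrow$ bilinear/Strichartz on thickened surface $\Rightarrow$ interpolation) is the route, with the degenerate‑direction analysis being the delicate point.
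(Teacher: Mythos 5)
The paper offers no proof of this lemma at all: it is recalled verbatim from Kinoshita \cite{K21}, so your bottom-line plan — cite \cite{K21} and observe that no Gevrey weight appears, hence nothing needs adapting — coincides exactly with what the paper does.

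Since you also sketched a self-contained argument, one conceptual point in it would need fixing before it could be written out. The pairs with $\frac{2}{p}+\frac{2}{q}=1$, $4\le p<\infty$, are \emph{not} Strichartz-admissible for the $|t|^{-2/3}$ decay of $W(t)=e^{it(D_x^3+D_y^3)}$: the $TT^*$ argument with decay $|t|^{-2/3}$ produces the line $\frac{3}{p}+\frac{2}{q}=1$ (whose diagonal point is precisely the $L^5_{t,x,y}$ estimate \eqref{eq:strichartz 1 mZK}), and at $(p,q)=(4,4)$ the lossless bound $\|W(t)f\|_{L^4_{t,x,y}}\lesssim\|f\|_{L^2}$ is simply false. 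So the factor $L^{\frac{2}{3p}+\frac1q}$ is not an ``endpoint/degeneracy'' loss; indeed the $|t|^{-2/3}$ bound is global and needs no care near the axes, being the tensor product of two one-dimensional Airy $|t|^{-1/3}$ bounds (van der Corput with the third derivative), so the Hessian rank-drop you flag as the delicate point plays no role. The clean route is: transfer principle applied to a sharp-admissible pair $\frac{3}{p_0}+\frac{2}{q_0}=1$ (e.g.\ $p_0=q_0=5$), giving $\|Q_Lu\|_{L^{p_0}_tL^{q_0}_{x,y}}\lesssim L^{1/2}\|Q_Lu\|_{L^2_{t,x,y}}$, interpolated with the trivial identity at $(p,q)=(2,2)$; this lands exactly on the line $\frac{2}{p}+\frac{2}{q}=1$ with factor $L^{\frac12-\frac{1}{3p}}=L^{\frac{2}{3p}+\frac1q}$, and with $p_0=5$ it yields your $(4,4)$ case with $L^{5/12}$. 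Your final interpolation of $(4,4)$ against $(\infty,2)$ is then fine, except that the Bernstein-in-$\tau$ factor is $L^{\frac12-\frac1p}$, not $L^{\frac1p-\frac12}$ as written.
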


In what follows, we record the bilinear estimate obtained by Kinoshita in \cite{K21} which plays crucial role to establish the local well posedness for the IVP \eqref{eq:gZK} with $k=1$ and initial data in $H^s(\R^2)$ for $s>-\frac{1}{4}$.

\begin{lemma}\label{lemma: Kinoshita} For any $s>-\frac{1}{4}$, there exist $b\in \left(\frac{1}{2},1\right)$, $\epsilon>0$ and $C>0$ such that
 \begin{equation}\label{eq:estimate partial derivative kinoshita}
\|(\partial_x+\partial_y)(uv)\|_{X^{s,b-1+\epsilon}} \leq C\|u\|_{X^{s,b}}\|v\|_{X^{s,b}}.
\end{equation}
\end{lemma}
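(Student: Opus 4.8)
The inequality \eqref{eq:estimate partial derivative kinoshita} is the sharp bilinear estimate for the symmetrized Zakharov--Kuznetsov flow established by Kinoshita in \cite{K21}, and I only outline the structure of the argument. Writing $\widehat{(\partial_x+\partial_y)f}(\gamma)=i(\xi+\eta)\widehat f(\gamma)$ and using Plancherel and duality, it suffices to bound the trilinear convolution form
\[
\left|\int_{\gamma=\gamma_1+\gamma_2,\ \tau=\tau_1+\tau_2}(\xi+\eta)\,\widehat u(\gamma_1,\tau_1)\,\widehat v(\gamma_2,\tau_2)\,\overline{\widehat w(\gamma,\tau)}\right|\lesssim \|u\|_{X^{s,b}}\,\|v\|_{X^{s,b}}\,\|w\|_{X^{-s,\,1-b-\epsilon}},
\]
where $w$ ranges over $X^{-s,\,1-b-\epsilon}$. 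The plan is to decompose the three factors into Littlewood--Paley pieces $P_{N_j}Q_{L_j}$, $j=0,1,2$ (the index $0$ labelling the output), so that $N_j\sim\|\gamma_j\|$ and $L_j\sim\langle\tau_j-\xi_j^3-\eta_j^3\rangle$, and, after a further dyadic splitting $|\xi+\eta|\sim M\le N_0$ of the derivative factor, to estimate each block. The algebraic input is the resonance identity: on $\gamma=\gamma_1+\gamma_2$, $\tau=\tau_1+\tau_2$,
\[
(\tau-\xi^3-\eta^3)-(\tau_1-\xi_1^3-\eta_1^3)-(\tau_2-\xi_2^3-\eta_2^3)=-3(\xi\xi_1\xi_2+\eta\eta_1\eta_2)=:-h(\gamma_1,\gamma_2),
\]
so that $L_{\max}:=\max(L_0,L_1,L_2)\gtrsim|h|$ in every block; moreover, since $\xi^3+\eta^3=(\xi+\eta)(\xi^2-\xi\eta+\eta^2)$ and $\xi^2-\xi\eta+\eta^2\sim\|\gamma\|^2$, one has $|\xi+\eta|\sim|\xi^3+\eta^3|\,\|\gamma\|^{-2}$, tying the derivative factor to the output modulation variable.

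I would then run a case analysis organized by the ordering of $N_0,N_1,N_2$ (by the symmetry in $u$ and $v$ one may take $N_1\ge N_2$) and of $L_0,L_1,L_2$. In the non-resonant regimes, where $L_{\max}$ is comparatively large, one applies Cauchy--Schwarz in the modulation variable of the highest-modulation factor together with the Strichartz-type bound of Lemma \ref{lemma: Strichartz Q_L}, and absorbs the loss produced by the derivative, $|\xi+\eta|\le\|\gamma\|\lesssim N_0$, and by the small output weight $N_0^{s}$ (with $s$ negative in the critical range $-\tfrac14<s<0$) into the gain $L_{\max}^{b-1+\epsilon}$ coming from $L_{\max}\gtrsim|h|$ and an elementary lower bound for $|h|$; a suitable choice of $b\in(\tfrac12,1)$ and of small $\epsilon>0$ then makes the resulting dyadic sum over $N_j,L_j,M$ converge. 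The delicate regime is formed by the high--high to low interactions $N_0\ll N_1\sim N_2$: there a naive Strichartz bound is too lossy, and on the part of frequency space where $h$ is small the modulation gain is too weak, so one must instead use a sharp bilinear $L^2$ estimate for the cubic characteristic surface $\{\tau=\xi^3+\eta^3\}$, that is, an inequality of nonlinear Loomis--Whitney / transversality type
\[
\big\|P_{N_1}Q_{L_1}u\cdot P_{N_2}Q_{L_2}v\big\|_{L^2_{t,x,y}}\lesssim \mathcal C(N_1,N_2,L_1,L_2)\,\big\|P_{N_1}Q_{L_1}u\big\|_{L^2}\big\|P_{N_2}Q_{L_2}v\big\|_{L^2},
\]
with the constant $\mathcal C$ encoding the curvature of the surface and the transversality of the two frequency caps. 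Inserting this into the block estimate, together with $L_{\max}\gtrsim|h|$ and the splitting of $|\xi+\eta|$, the exponents balance exactly at the threshold $s>-\tfrac14$, which accounts for the optimality of the hypothesis.

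The main obstacle is precisely this sharp bilinear estimate: obtaining $\mathcal C$ with the correct dependence on all three pairs $(N_j,L_j)$ and on the angle between the frequency caps, and then carrying it through the summation — a purely $L^4$-based argument reaches only a strictly larger $s$. The remainder is careful bookkeeping: partitioning frequency--modulation space, distributing the derivative factor $|\xi+\eta|$ (using both $|\xi+\eta|\le\|\gamma\|$ and its relation to $\xi^3+\eta^3$), and summing the dyadic series by means of the $\epsilon$ slack in the exponent $b-1+\epsilon$. Since the complete argument is long and technical, we refer to \cite{K21} for the details.
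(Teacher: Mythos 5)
Your treatment of this statement matches the paper's: Lemma \ref{lemma: Kinoshita} is Kinoshita's bilinear estimate, which the paper simply records from \cite{K21} without proof, exactly as you do after outlining the strategy (duality, dyadic decomposition, the resonance identity $-3(\xi\xi_1\xi_2+\eta\eta_1\eta_2)$, Strichartz bounds in non-resonant regimes, and sharp bilinear $L^2$/transversality estimates for the cubic surface), so no gap arises. One small inaccuracy in your sketch, immaterial since you defer to \cite{K21}: there the nonlinear Loomis--Whitney type input handles the \emph{transversal} (non-parallel) low-modulation interactions, while the genuinely delicate cases are the nearly parallel frequency interactions (arising not only when $N_0\ll N_1\sim N_2$ but also when $N_{\min}$ is an input frequency), which require separate angular almost-orthogonal decompositions and are where the threshold $s>-\tfrac14$ is actually saturated.
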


\begin{remark}\label{rem-bilinear}
Note that from Lemma  \ref{lemma: Kinoshita} one infers that,  given  $s>-\frac{1}{4}$, there exist $b=b(s)\in \left(\frac{1}{2},1\right)$, $\epsilon=\epsilon(s)>0$ and $C=C(s)>0$ depending on $s$ such that the estimate \eqref{eq:estimate partial derivative kinoshita} holds. The estimate \eqref{eq:estimate partial derivative kinoshita} in this form is sufficient to prove the local well-posedness for the IVP associated to the ZK equation for given data in  $H^s(\R^2)$  and also, with usual adaptation in the Gevrey-Bourgain's space, for data in $G^{\sigma,  s}(\R^2)$ for $s > -\frac{1}{4}$. However, to control the growth of the almost conserved quantity in $G^{\sigma,  0}(\R^2)$ that we will introduce in Section \ref{sec:4} we need a more refined form of the estimate \eqref{eq:estimate partial derivative kinoshita}. More precisely, we need to guarantee that for the same $b=b(s)$ and $\epsilon=\epsilon(s)$ of Lemma \ref{lemma: Kinoshita} one has  $b(s)\leq b(0)$. The necessity of this refined version is explained in  Remark \ref{remark:faixa} below.
\end{remark}

In sequel, we state and prove a refined version of the bilinear estimate that is crucial to obtain an almost conserved quantity in $G^{\sigma,  0}(\R^2)$ space.
\begin{lemma} \label{lemma:estimates partial ZK} Let $s>-\frac{1}{4}$. Then, for $\epsilon (s)=\min(\frac{1}{24}, \frac{s}{6}+\frac{1}{24}) $, we have
\begin{equation}\label{eq:estimate partial derivative ZK}
\|(\partial_x+\partial_y)(u_1u_2)\|_{X^{s,-\frac{1}{2}+2\epsilon(s)}} \leq C\|u_1\|_{X^{s,\frac{1}{2}+\epsilon(s)}}\|u_2\|_{X^{s,\frac{1}{2}+\epsilon(s)}},
\end{equation}
where $C>0$ depends on $s$. In particular, for any $s>-\frac{1}{4}$, one has $\epsilon(s)\leq\epsilon(0)$.
\end{lemma}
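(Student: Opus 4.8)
The plan is to reduce the desired estimate \eqref{eq:estimate partial derivative ZK} to the Kinoshita bilinear estimate \eqref{eq:estimate partial derivative kinoshita} by carefully tracking the role of the parameter $b$ and the gain $\epsilon$, and then to insert the explicit value $\epsilon(s)=\min\left(\tfrac{1}{24},\tfrac{s}{6}+\tfrac{1}{24}\right)$ coming from the proof of Lemma~\ref{lemma: Kinoshita} in \cite{K21}. First I would revisit the dyadic decomposition underlying \eqref{eq:estimate partial derivative kinoshita}: writing $u_j=\sum_{N_j,L_j}Q_{L_j}P_{N_j}u_j$, one estimates $\|(\partial_x+\partial_y)(u_1u_2)\|_{X^{s,b-1+\epsilon}}$ by summing over the frequency and modulation scales, splitting into the usual cases according to which of $N_{\max}$, $L_{\max}$ dominates and whether the output frequency is comparable to or much smaller than the input ones. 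The key point, flagged already in Remark~\ref{rem-bilinear}, is that the proof in \cite{K21} works for a \emph{range} of admissible $b\in(\tfrac12,1)$ and a correlated gain $\epsilon$: for each $s>-\tfrac14$ one may take $b=\tfrac12+\epsilon$ with $\epsilon$ up to the threshold $\epsilon(s)$ determined by the worst resonant interaction, so that $b-1+\epsilon=-\tfrac12+2\epsilon(s)$ matches the right-hand modulation exponent in \eqref{eq:estimate partial derivative ZK}.

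Next I would carry out the explicit bookkeeping that produces the formula $\epsilon(s)=\min\left(\tfrac{1}{24},\tfrac{s}{6}+\tfrac{1}{24}\right)$. The strategy is to run through the case analysis of the bilinear estimate and, in each case, use Lemma~\ref{lemma: Strichartz Q_L} (Strichartz with modulation gain, $\|Q_Lu\|_{L_t^pL_{x,y}^q}\lesssim L^{\frac{2}{3p}+\frac1q}\|Q_Lu\|_{L^2_{t,x,y}}$ for $p\ge 4$, $\tfrac2p+\tfrac2q=1$) together with the algebraic resonance identity controlling $\tau-\xi^3-\eta^3$ on a product. Applying Hölder's inequality with a Strichartz pair (e.g. $p=q=4$, giving the exponent $\tfrac1{6}+\tfrac14=\tfrac5{12}$ on each modulation factor), one sees that the derivative loss $N_{\max}$ from $(\partial_x+\partial_y)$ is absorbed at the cost of a power of the largest modulation, and matching powers forces $\epsilon\le \tfrac1{24}$ in the high-high-to-low regime, while in the regime where the output frequency is comparable to the inputs the $\langle\gamma\rangle^s$ weights reshuffle the budget and one picks up the alternative constraint $\epsilon\le\tfrac{s}{6}+\tfrac1{24}$ (which is the binding one precisely when $-\tfrac14<s<0$). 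Taking the minimum of the two thresholds gives the stated $\epsilon(s)$, and since $s\mapsto\tfrac s6+\tfrac1{24}$ is increasing, the inequality $\epsilon(s)\le\epsilon(0)=\tfrac1{24}$ is immediate.

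The main obstacle, and the part requiring genuine care rather than routine estimation, is verifying that a \emph{single} choice of $b$ (namely $b=\tfrac12+\epsilon(s)$) simultaneously works in all the cases of the bilinear case analysis — i.e. that none of the subcases demands $b$ strictly larger than $\tfrac12+\epsilon(s)$ or forces a loss exceeding $2\epsilon(s)$ on the output side. This is where one must be faithful to the sharp counting in \cite{K21}: the resonance function for the symmetrized symbol $\xi^3+\eta^3$ behaves like a product of the frequency differences, and the delicate nearly-resonant region (where two of the frequencies nearly cancel) is what pins down the threshold. I would handle this by treating that region separately using the refined bilinear Strichartz/transversality estimate from \cite{K21}, checking that the gain there is also at least $2\epsilon(s)$. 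Once the uniform-in-case choice of $b$ is secured, the passage from the $X^{s,b}$ statement to the $X^{s,\frac12+\epsilon(s)}$ statement in \eqref{eq:estimate partial derivative ZK} is purely notational, and the final claim $\epsilon(s)\le\epsilon(0)$ follows by monotonicity as noted.
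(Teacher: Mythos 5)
Your proposal takes essentially the same route as the paper's proof: rerun Kinoshita's dyadic case analysis from \cite{K21} with explicit bookkeeping of the modulation exponents, obtaining the constraint $\epsilon\le\frac{1}{24}$ from the $(L_0L_1L_2)^{\frac{5}{12}}$-type Strichartz bounds and the constraint $\epsilon\le\frac{s}{6}+\frac{1}{24}$ (binding for $-\frac14<s<0$) from the nearly-parallel interactions where frequency decay must be traded for modulation slack via $L_{\max}\ll N_{\max}^3$, then take the minimum and conclude $\epsilon(s)\le\epsilon(0)$ by monotonicity. The only minor imprecision is that in the paper the second constraint arises in the parallel-interaction ($I_2$) subcases — including the configuration where the output frequency is the smallest, not only where it is comparable to the inputs — but this does not alter the argument.
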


\begin{proof} The proof of this lemma follows using the idea of proof of  \eqref{eq:estimate partial derivative kinoshita} presented in \cite{K21} (see Theorem~$2.1$ there). For the sake of completeness, we  provide a proof for the estimate \eqref{eq:estimate partial derivative ZK} introducing several details needed in the proof presented in \cite{K21}. 

Using duality followed by dyadic decomposition, to obtain \eqref{eq:estimate partial derivative ZK} it suffices to show
\begin{equation}\label{eq:3.2}
\begin{split}
    &\sum_{N_j,L_j(j=0,1,2)} \left|\int ((\partial_x+\partial_y)(Q_{L_0}P_{N_0}u_0))(Q_{L_1}P_{N_1}u_1)(Q_{L_2}P_{N_2}u_2)dtdxdy\right| \leq \\
    & \hspace{6.5cm} \leq C\|u_0\|_{X^{-s,\frac{1}{2}-2\epsilon(s)}}\|u_1\|_{X^{s,\frac{1}{2}+\epsilon(s)}}\|u_2\|_{X^{s,\frac{1}{2}+\epsilon(s)}}.
    \end{split}
\end{equation}
For simplicity, we use the notations
\begin{equation*}
    \begin{split}
        &L_{\max}=\max(L_0,L_1,L_2), \quad N_{\max}=\max(N_0,N_1,N_2), \quad N_{\min}=\min(N_0,N_1,N_2),\\
        &u_{N_i,L_i}=Q_{L_i}P_{N_i}u_i.
    \end{split}
\end{equation*}

Using Plancherel's Theorem, one can see that \eqref{eq:3.2} is verified by showing
\begin{equation}\label{eq:3.2``}
\begin{split}
    &\left|\int_{*} |\xi+\eta|\widehat{u}_{N_0,L_0}(\xi,\eta,\tau)\widehat{u}_{N_1,L_1}(\xi_1,\eta_1,\tau_1)\widehat{u}_{N_2,L_2}(\xi_2,\eta_2,\tau_2)d\sigma_1 d\sigma_2 \right| \leq\\
    & \qquad \qquad   \leq C\frac{N_1^sN_2^s}{N_0^s}L_0^{\frac{1}{2}-2\epsilon(s)}(L_1L_2)^{\frac{1}{2}+\epsilon(s)}\|u_{N_0,L_0}\|_{L_{t,x,y}^{2}}\|u_{N_1,L_1}\|_{L_{t,x,y}^{2}}\|u_{N_2,L_2}\|_{L_{t,x,y}^{2}},
    \end{split}
\end{equation}
where $d\sigma_j=d\tau_jd\xi_jd\eta_j$ and $\int_*$ denotes the integral over the set $(\xi,\eta,\tau)=(\xi_1+\xi_2,\eta_1+\eta_2,\tau_1+\tau_2)$. 

In a similar way, another alternative to prove \eqref{eq:3.2} consists of verifying that
\begin{equation}\label{eq:3.3``}
\begin{split}
    &\left|\int_{**} |\xi+\eta|\widehat{u}_{N_0,L_0}(\xi,\eta,\tau)\widehat{u}_{N_1,L_1}(\xi_1,\eta_1,\tau_1)\widehat{u}_{N_2,L_2}(\xi_2,\eta_2,\tau_2)d\sigma_1 d\sigma \right| \leq\\
    & \qquad \qquad   \leq C\frac{N_1^sN_2^s}{N_0^s}L_0^{\frac{1}{2}-2\epsilon(s)}(L_1L_2)^{\frac{1}{2}+\epsilon(s)}\|u_{N_0,L_0}\|_{L_{t,x,y}^{2}}\|u_{N_1,L_1}\|_{L_{t,x,y}^{2}}\|u_{N_2,L_2}\|_{L_{t,x,y}^{2}},
    \end{split}
\end{equation}
where $d\sigma=d\tau d\xi d\eta$, $d\sigma_1=d\tau_1d\xi_1d\eta_1$ and $\int_{**}$ denotes the integral over the set \break $(\xi_2,\eta_2,\tau_2)=(\xi_1+\xi,\eta_1+\eta, \tau_1+\tau)$. 

Moreover, by Plancherel's Theorem, it follows that \eqref{eq:3.2``} and \eqref{eq:3.3``} are verified by showing
\begin{equation}\label{eq:3.3}
\begin{split}
    &N_0 \left|\int u_{N_0,L_0}u_{N_1,L_1}u_{N_2,L_2}dtdxdy\right|\leq\\
    & \qquad \qquad   \leq C\frac{N_1^sN_2^s}{N_0^s}L_0^{\frac{1}{2}-2\epsilon(s)}(L_1L_2)^{\frac{1}{2}+\epsilon(s)}\|u_{N_0,L_0}\|_{L_{t,x,y}^{2}}\|u_{N_1,L_1}\|_{L_{t,x,y}^{2}}\|u_{N_2,L_2}\|_{L_{t,x,y}^{2}}.
    \end{split}
\end{equation}

So, in what follows, we will show \eqref{eq:3.2``}, \eqref{eq:3.3``} or \eqref{eq:3.3}, depending on the case undertaken to get \eqref{eq:3.2}.

If $N_0 \sim N_1 \sim N_2\sim 1$, the result follows easily. In fact, using the Strichartz estimates from Lemma \ref{lemma: Strichartz Q_L} with $p=q=4$, one has
 \begin{equation}\label{eq:trivial case}
 \begin{split}
     \!\!\!\!\!   \left|\int u_{N_0,L_0}u_{N_1,L_1}u_{N_2,L_2}dtdxdy\right|& \leq \|u_{N_0,L_0}\|_{L_{t,x,y}^2}\|u_{N_1,L_1}\|_{L_{t,x,y}^{4}}\|u_{N_2,L_2}\|_{L_{t,x,y}^{4}}\\
        &\leq C(L_1L_2)^{\frac{5}{12}}\|u_{N_0,L_0}\|_{L_{t,x,y}^{2}}\|u_{N_1,L_1}\|_{L_{t,x,y}^{2}}\|u_{N_2,L_2}\|_{L_{t,x,y}^{2}}\\
        &\leq C(L_0L_1L_2)^{\frac{5}{12}}\|u_{N_0,L_0}\|_{L_{t,x,y}^{2}}\|u_{N_1,L_1}\|_{L_{t,x,y}^{2}}\|u_{N_2,L_2}\|_{L_{t,x,y}^{2}}.
        \end{split}
    \end{equation}
Consequently, for every $0<\epsilon\leq \frac{1}{24}$, one has
\begin{equation*}
\begin{split}
    \left|\int u_{N_0,L_0}u_{N_1,L_1}u_{N_2,L_2}dtdxdy\right|&\leq CL_0^{\frac{1}{2}-2\epsilon}(L_1L_2)^{\frac{1}{2}+\epsilon}\|u_{N_0,L_0}\|_{L_{t,x,y}^{2}}\|u_{N_1,L_1}\|_{L_{t,x,y}^{2}}\|u_{N_2,L_2}\|_{L_{t,x,y}^{2}}
    \end{split}
\end{equation*}    
and \eqref{eq:3.3} is proved.

Henceforth, we assume $1 \ll N_{\max}$. Under this condition, the proof of \eqref{eq:3.2} is divided in the follwing cases:
\begin{itemize}
    \item \textbf{High modulation:} $L_{\max}\geq C(N_{\max})^3$.
    
    \item \textbf{Low modulation:} $L_{\max}\ll(N_{\max})^3$.
    \begin{itemize}
        \item \textit{Non-parallel interactions:}
            \begin{itemize}
        \item[(i)] $N_{\max} \leq 2^{22}N_{\min}$,
        \item[(ii)] $|\sin\angle{((\xi_1,\eta_1),(\xi_2,\eta_2))}|\geq 2^{-22}$,
    \end{itemize}
        \item \textbf{Parallel interactions:}\\
     $\left\{ \begin{array}{lll}
\text{If } N_{\min}=N_0,  \, |\sin\angle{((\xi_1,\eta_1),(\xi_2,\eta_2))}|\geq 2^{-20}, \\
\text{If } N_{\min}=N_1,  \, |\sin\angle{((\xi,\eta),(\xi_2,\eta_2))}|\geq 2^{-20},\\
\text{If } N_{\min}=N_2,  \, |\sin\angle{((\xi,\eta),(\xi_1,\eta_1))}|\geq 2^{-20}.
\end{array}\right.$
\end{itemize}
where $\angle{((\xi_i,\eta_i),(\xi_j,\eta_j))}\in [0,\pi]$ is the angle between $(\xi_i,\eta_i)$ and $(\xi_j,\eta_j)$.
\end{itemize}

\vspace{0.2cm}
In sequel, we provide proof of \eqref{eq:3.2} considering the cases described above.
\vspace{0.2cm}

\noindent {\bf Case 1 (High modulation):} \fbox{$L_{\max}\geq C(N_{\max})^3$.}
 In this case, we will show \eqref{eq:3.3}. First note that, under the condition of this case, from Proposition 3.2 of \cite{K21}, we have 
    \begin{equation}\label{eq:2.10}
        \left|\int u_{N_0,L_0}u_{N_1,L_1}u_{N_2,L_2}dtdxdy\right| \leq C(N_{\max})^{-\frac{5}{4}}(L_0L_1L_2)^{\frac{5}{12}}\|u_{N_0,L_0}\|_{L_{t,x,y}^{2}}\|u_{N_1,L_1}\|_{L_{t,x,y}^{2}}\|u_{N_2,L_2}\|_{L_{t,x,y}^{2}}.
    \end{equation}
    From \eqref{eq:2.10}, one has that for every $0<\epsilon\leq \frac{1}{24}$
        \begin{equation}\label{eq:2.10'}
        \left|\int u_{N_0,L_0}u_{N_1,L_1}u_{N_2,L_2}dtdxdy\right| \leq C(N_{\max})^{-\frac{5}{4}}L_0^{\frac{1}{2}-2\epsilon}(L_1L_2)^{\frac{1}{2}+\epsilon}\|u_{N_0,L_0}\|_{L_{t,x,y}^{2}}\|u_{N_1,L_1}\|_{L_{t,x,y}^{2}}\|u_{N_2,L_2}\|_{L_{t,x,y}^{2}}.
    \end{equation}
    Consequently, \eqref{eq:3.3} follows from \eqref{eq:2.10'} if we guarantee that for any $s>-\frac{1}{4}$,
    \begin{equation}\label{eq:2.11}
        \frac{N_0}{(N_{\max})^{\frac{5}{4}}} \leq CN_0^{-s}N_1^s N_2^s. 
    \end{equation}
We divide the proof of \eqref{eq:2.11} in two different sub-cases, $s\geq 0$ and $-\frac{1}{4}<s<0$.\\

\noindent{\bf Sub-case 1.1:}  \fbox{$s \geq 0$.}  We further divide this sub-case to two different sub-cases.\\

\noindent{\bf Sub-case 1.1.1:}  \fbox{ $N_{max}=N_0$.} In this sub-case, without loss of generality, one can suppose that $N_0\sim N_1$ and consequently
\begin{equation*}
    \frac{N_0}{(N_{\max})^{\frac{5}{4}}}\leq 1 \sim \frac{N_1^s}{N_0^{s}} \leq CN_1^s N_2^sN_0^{-s}.
\end{equation*}

\noindent{\bf Sub-case 1.1.2:}  \fbox{ $N_{\max}=N_1$.} In this sub-case, one simply has

            \begin{equation*}
    \frac{N_0}{(N_{\max})^{\frac{5}{4}}} \leq 1\leq \frac{N_1^s}{N_0^{s}} \leq N_1^sN_2^sN_0^{-s}.
            \end{equation*}

\noindent{\bf Sub-case 1.2:}  \fbox{ $-\frac{1}{4}<s<0$.} We analyse this sub-case considering two different situations.\\

\noindent{\bf Sub-case 1.2.1:}  \fbox{ $N_{\max}=N_0$.} In this case, without loss of generality, one can assume that $N_1 \leq N_2$. Since $-\frac{1}{4}<s<0$, one has $\frac{1}{4}+s>0$, and consequently
\begin{equation*}
    \frac{N_0}{N_1^sN_2^sN_0^{-s}(N_{\max})^{\frac{5}{4}}} =  \frac{1}{N_1^sN_2^sN_0^{\frac{1}{4}-s}}\leq  \frac{1}{N_0^{\frac{1}{4}+s}} \leq 1.
\end{equation*}
So, we conclude that
\begin{equation*}
     \frac{N_0}{(N_{\max})^{\frac{5}{4}}} \leq CN_0^{-s}N_1^s N_2^s.
\end{equation*}

\noindent{\bf Sub-case 1.2.2:}  \fbox{ $N_{\max}=N_1$.}  In this case, one has to show
    \begin{equation}\label{eq:N2>N0}
        \frac{N_0}{N_1^{\frac{5}{4}}} \leq CN_1^sN_2^sN_0^{-s}.
    \end{equation}
    The estimate \eqref{eq:N2>N0} is equivalent to
    \begin{equation*}
        1\leq CN_1^{s+\frac{5}{4}}N_2^sN_0^{-s-1}.
    \end{equation*}
    The last inequality is true since $s+\frac{1}{4}>0$ and consequently
    \begin{equation*}
    \begin{split}
    N_1^{s+\frac{5}{4}}N_2^sN_0^{-s-1}&=N_1^{\frac{1}{4}}N_1^{s+1}N_2^sN_0^{-s-1}
    \geq N_2^{\frac{1}{4}}N_0^{s+1}N_2^sN_0^{-s-1}
    =N_2^{s+\frac{1}{4}}
    \geq 1.
    \end{split}
    \end{equation*}

\noindent{\bf Case 2 (Low modulation):} \fbox{ $L_{\max}\ll N_{\max}^3$.}
We divide the analysis in two different sub-cases.\\

\noindent{\bf Sub-case 2.1 (Non-parallel interactions):} In this case, we suppose  
    \begin{itemize}
        \item[(i)] $L_{\max}\leq 2^{-100}(N_{\max})^3$,
        \item[(ii)] $N_{\max} \leq 2^{22}N_{\min}$,
        \item[(iii)] $|\sin\angle{((\xi_1,\eta_1),(\xi_2,\eta_2))}|\geq 2^{-22}$,
    \end{itemize}
where $\angle{((\xi_1,\eta_1),(\xi_2,\eta_2))}\in [0,\pi]$ is the angle between $(\xi_1,\eta_1)$ and $(\xi_2,\eta_2)$. Under the conditions (i), (ii) and (iii), it is shown in \cite{K21} (see equation (3.5) in page 455 there), that
    \begin{equation}\label{eq:2.17}
    \begin{split}
        &\left|\int_{*} \widehat{u}_{N_0,L_0}(\xi,\eta,\tau)\widehat{u}_{N_1,L_1}(\xi_1,\eta_1,\tau_1)\widehat{u}_{N_2,L_2}(\xi_2,\eta_2,\tau_2)d\sigma_1d\sigma_2\right| \\
        & \qquad \qquad \qquad \qquad \leq C(N_{\max})^{-\frac{5}{4}}(L_0L_1L_2)^{\frac{5}{12}}\|\widehat{u}_{N_0,L_0}\|_{L_{\tau,\xi,\eta}^{2}}\|\widehat{u}_{N_1,L_1}\|_{L_{\tau,\xi,\eta}^{2}}\|\widehat{u}_{N_2,L_2}\|_{L_{\tau,\xi,\eta}^{2}},
        \end{split}
    \end{equation}
    where $d\sigma_j=d\tau_jd\xi_jd\eta_j$ and $\int_*$ denotes the integral over the set $(\xi,\eta,\tau)=(\xi_1+\xi_2,\eta_1+\eta_2,\tau_1+\tau_2)$. The desired result follows exactly as in the {\bf Case 1} since, using Plancherel's Theorem, \eqref{eq:2.17} reduces to \eqref{eq:2.10}.

  \vspace{0.3cm}  

\noindent{\bf Sub-case 2.2 (Parallel interactions):} In this case, we assume
\begin{itemize}
    \item[(i)] $L_{\max}\leq 2^{-100}(N_{\max})^3$,
    \item[(ii)]$\left\{ \begin{array}{lll}
\text{If } N_{\min}=N_0,  \, |\sin\angle{((\xi_1,\eta_1),(\xi_2,\eta_2))}|\geq 2^{-20}, \\
\text{If } N_{\min}=N_1,  \, |\sin\angle{((\xi,\eta),(\xi_2,\eta_2))}|\geq 2^{-20},\\
\text{If } N_{\min}=N_2,  \, |\sin\angle{((\xi,\eta),(\xi_1,\eta_1))}|\geq 2^{-20}.
\end{array}\right.$
\end{itemize}

Taking these assumptions in consideration, we divide the proof of \eqref{eq:3.2} in two different sub-cases $N_{\min}=N_2$ and $N_{\min}=N_0$ since, by symmetry, the argument used for $N_{\min}=N_2$ can be applied to the case $N_{\min}=N_1$. 
\vspace{.5cm}

\noindent{\bf Sub-case 2.2.1:} \fbox{$N_{\min}=N_2$.}  With this consideration, we must have $N_0\sim N_1\sim N_{\max}$ and we will show \eqref{eq:3.3``}.

Let $A, B, C \subset \R^2$ be the following sets
\begin{equation*}
\begin{split}
    A&=\left\lbrace(|(\xi,\eta)|\cos\theta,|(\xi,\eta)|\sin\theta) \in \R^2: \min(|\theta|,|\theta-\pi|)\leq 2^{-10}\pi\right\rbrace,\\
     B&=\left\lbrace(|(\xi,\eta)|\cos\theta,|(\xi,\eta)|\sin\theta) \in \R^2: \min\left(\left|\theta-\frac{\pi}{2}\right|,\left|\theta+\frac{\pi}{2}\right|\right)\leq 2^{-10}\pi\right\rbrace,\\
      A&=\left\lbrace(|(\xi,\eta)|\cos\theta,|(\xi,\eta)|\sin\theta) \in \R^2: \min\left(\left|\theta-\frac{3\pi}{4}\right|,\left|\theta+\frac{\pi}{4}\right|\right)\leq 2^{-10}\pi\right\rbrace,\\
    \end{split}
\end{equation*}
and define $I_1, I_2, I_3 \subset \R^2 \times \R^2$ as
\begin{equation*}
    \begin{split}
        I_1&= (A\times A) \cup (B\times B)\\
        I_2&=C\times C,\\
        I_3&=(\R^2 \times \R^2)\setminus (I_1 \cup I_2).
    \end{split}
\end{equation*}

In this case, the estimate \eqref{eq:3.3``} is obtained dividing the integral in three parts involving the sets $I_1, I_2$ and $I_3$.\\

\noindent{\bf Sub-case 2.2.1.1:} \fbox{$(\xi_1,\eta_1)\times (\xi,\eta) \in I_3$.}  In this case, Kinoshita \cite{K21} (see Proposition $3.1$ there), shows that
    \begin{equation}\label{eq: estimate case 3.1.1}
    \begin{split}
        &\left|\int_{**} \widehat{u}_{N_2,L_2}(\xi_2,\eta_2,\tau_2)\chi_{I_3}((\xi_1,\eta_1),(\xi,\eta))\widehat{u}_{N_1,L_1}(\xi_1,\eta_1,\tau_1)\widehat{u}_{N_0,L_0}(\xi,\eta,\tau)d\sigma_1d\sigma\right| \\
        & \qquad \qquad \qquad \qquad \leq CN_1^{-\frac{5}{4}}L_0^{\frac{1}{4}}(L_1L_2)^{\frac{1}{2}}\|\widehat{u}_{N_0,L_0}\|_{L_{\tau,\xi,\eta}^{2}}\|\widehat{u}_{N_1,L_1}\|_{L_{\tau,\xi,\eta}^{2}}\|\widehat{u}_{N_2,L_2}\|_{L_{\tau,\xi,\eta}^{2}},
        \end{split}
    \end{equation}
where $d\sigma=d\tau d\xi d\eta$, $d\sigma_1=d\tau_1d\xi_1d\eta_1$ and $\int_{**}$ denotes the integral over the set\break $(\xi_2,\eta_2,\tau_2)=(\xi_1+\xi,\eta_1+\eta, \tau_1+\tau)$. 

For every $0<\epsilon\leq \frac{1}{8}$, it follows from \eqref{eq: estimate case 3.1.1} that
    \begin{equation}\label{eq:2.19}
    \begin{split}
        &\left|\int_{**} \widehat{u}_{N_2,L_2}(\xi_2,\eta_2,\tau_2)\chi_{I_3}((\xi_1,\eta_1),(\xi,\eta))\widehat{u}_{N_1,L_1}(\xi_1,\eta_1,\tau_1)\widehat{u}_{N_0,L_0}(\xi,\eta,\tau)d\sigma_1d\sigma\right| \\
        & \qquad \qquad \qquad \qquad \leq CN_1^{-\frac{5}{4}}L_0^{\frac{1}{2}-2\epsilon}(L_1L_2)^{\frac{1}{2}+\epsilon}\|\widehat{u}_{N_0,L_0}\|_{L_{\tau,\xi,\eta}^{2}}\|\widehat{u}_{N_1,L_1}\|_{L_{\tau,\xi,\eta}^{2}}\|\widehat{u}_{N_2,L_2}\|_{L_{\tau,\xi,\eta}^{2}}.
        \end{split}
    \end{equation}

    In view of \eqref{eq:2.19}, to get \eqref{eq:3.3``}, we just need to guarantee that
    \begin{equation}\label{eq:case 3.1}
        \frac{N_0}{N_1^{\frac{5}{4}}} \leq CN_0^{-s}N_1^s N_2^s. 
    \end{equation}
    Since $N_1\sim N_{max}$, the inequality \eqref{eq:case 3.1} follows from  the estimate \eqref{eq:2.11} obtained in {\bf Case 1}. So,  one concludes from \eqref{eq:2.19} and \eqref{eq:case 3.1} that, for every $0<\epsilon\leq \frac{1}{8}$,
        \begin{equation*}
    \begin{split}
        &\left|\int_{**} |\xi+\eta|\widehat{u}_{N_2,L_2}(\xi_2,\eta_2,\tau_2)\chi_{I_3}((\xi_1,\eta_1),(\xi,\eta))\widehat{u}_{N_1,L_1}(\xi_1,\eta_1,\tau_1)\widehat{u}_{N_0,L_0}(\xi,\eta,\tau)d\sigma_1d\sigma\right| \\
        &\;\; \leq C N_0\left|\int_{**} \widehat{u}_{N_2,L_2}(\xi_2,\eta_2,\tau_2)\chi_{I_3}((\xi_1,\eta_1),(\xi,\eta))\widehat{u}_{N_1,L_1}(\xi_1,\eta_1,\tau_1)\widehat{u}_{N_0,L_0}(\xi,\eta,\tau)d\sigma_1d\sigma\right| \\
        & \;\;  \leq CN_0^{-s}N_1^s N_2^sL_0^{\frac{1}{2}-2\epsilon}(L_1L_2)^{\frac{1}{2}+\epsilon}\|\widehat{u}_{N_0,L_0}\|_{L_{\tau,\xi,\eta}^{2}}\|\widehat{u}_{N_1,L_1}\|_{L_{\tau,\xi,\eta}^{2}}\|\widehat{u}_{N_2,L_2}\|_{L_{\tau,\xi,\eta}^{2}},
        \end{split}
    \end{equation*}
    thereby getting \eqref{eq:3.3``}.\\
    
\noindent{\bf Sub-case 2.2.1.2:} \fbox{ $(\xi_1,\eta_1)\times (\xi,\eta) \in I_2$.}
Under this condition, Proporition 3.15 in \cite{K21} states that
    \begin{equation}\label{eq: estimate case 3.1.2}
    \begin{split}
        &\left|\int_{**} |\xi+\eta|\widehat{u}_{N_2,L_2}(\xi_2,\eta_2,\tau_2)\chi_{I_2}((\xi_1,\eta_1),(\xi,\eta))\widehat{u}_{N_1,L_1}(\xi_1,\eta_1,\tau_1)\widehat{u}_{N_0,L_0}(\xi,\eta,\tau)d\sigma_1d\sigma\right| \\
        & \qquad \qquad \qquad \qquad \leq CN_1^{-\frac{1}{4}}(L_0L_1L_2)^{\frac{1}{2}}\|\widehat{u}_{N_0,L_0}\|_{L_{\tau,\xi,\eta}^{2}}\|\widehat{u}_{N_1,L_1}\|_{L_{\tau,\xi,\eta}^{2}}\|\widehat{u}_{N_2,L_2}\|_{L_{\tau,\xi,\eta}^{2}},
        \end{split}
    \end{equation}
where $d\sigma=d\tau d\xi d\eta$, $d\sigma_1=d\tau_1d\xi_1d\eta_1$ and $\int_{**}$ denotes the integral over the set $(\xi_2,\eta_2,\tau_2)=(\xi_1+\xi,\eta_1+\eta, \tau_1+\tau)$. In this case, we will show  the alternative expression \eqref{eq:3.3``}.

First we will show that,
\begin{equation}\label{eq: after 2.23`}
    N_1^{-\frac{1}{4}}L_0^{\frac{1}{2}} \leq CN_0^{-s}N_1^{s}N_2^{s}L_0^{\frac{1}{2}-2\epsilon(s)}.
\end{equation}

We divide the proof of \eqref{eq: after 2.23`} in two different cases, viz. $s \geq 0$ and $-\frac{1}{4}<s<0$.

For $s \geq 0$, one has
        \begin{equation*}
            L_0^{\frac{1}{12}} \leq (L_{\max})^{\frac{1}{12}} \leq C(N_{\max})^{\frac{1}{4}}.
        \end{equation*}
        Since $N_0\sim N_1 \sim N_{\max}$, we get
        \begin{equation}\label{eq-mm0}
            N_1^{-\frac{1}{4}}L_0^{\frac{1}{2}}\leq CN_1^{-\frac{1}{4}}(N_{\max})^{\frac{1}{4}}L_0^{\frac{5}{12}}\sim N_1^s N_0^{-s}L_0^{\frac{5}{12}}\leq N_1^s N_2^s N_0^{-s}L_0^{\frac{5}{12}}.
        \end{equation}
Considering $0<\epsilon\leq \frac{1}{24}$,  the estimate \eqref{eq-mm0} yields \eqref{eq: after 2.23`} as required.

Now, for $-\frac{1}{4}<s<0$ fix $0<r<\frac{1}{4}$ such that $s=-\frac{1}{4}+r$. Since $N_2 \leq N_0$, $N_1\sim N_{\max}$ and $L_0^{\frac{r}{3}}\leq C(N_{\max})^r$, we get
        \begin{equation}\label{eq-mm1}
            N_1^{-\frac{1}{4}}L_0^{\frac{1}{2}}\leq N_1^{-\frac{1}{4}}(N_{\max})^rL_0^{\frac{1}{2}-\frac{r}{3}} \sim N_1^{s}L_0^{\frac{1}{2}-\frac{r}{3}}\leq N_1^s \frac{N_0^{-s}}{N_2^{-s}}L_0^{\frac{1}{2}-\frac{r}{3}}=N_1^sN_2^{s} N_0^{-s}L_0^{\frac{1}{2}-\frac{r}{3}}.
        \end{equation}
Thus, we get the estimate \eqref{eq: after 2.23`} from \eqref{eq-mm1}  for every $0<\epsilon(s)\leq \frac{r}{6}=\frac{s}{6}+\frac{1}{24}$.
        
        Finally, for any $0<\epsilon(s)\leq \min\{\frac{1}{24},\frac{s}{6}+\frac{1}{24}\}$, from \eqref{eq: estimate case 3.1.2} and \eqref{eq: after 2.23`} one has
 \begin{equation*}
    \begin{split}
        &\left|\int_{**} |\xi+\eta|\widehat{u}_{N_2,L_2}(\xi_2,\eta_2,\tau_2)\chi_{I_2}((\xi_1,\eta_1),(\xi,\eta))\widehat{u}_{N_1,L_1}(\xi_1,\eta_1,\tau_1)\widehat{u}_{N_0,L_0}(\xi,\eta,\tau)d\sigma_1d\sigma\right| \\
        & \qquad \qquad   \leq CN_1^sN_2^{s} N_0^{-s}L_0^{\frac{1}{2}-\frac{r}{3}}(L_1L_2)^{\frac{1}{2}}\|\widehat{u}_{N_0,L_0}\|_{L_{\tau,\xi,\eta}^{2}}\|\widehat{u}_{N_1,L_1}\|_{L_{\tau,\xi,\eta}^{2}}\|\widehat{u}_{N_2,L_2}\|_{L_{\tau,\xi,\eta}^{2}}\\
        & \qquad \qquad   \leq CN_1^{s}N_2^{s}N_0^{-s}L_0^{\frac{1}{2}-2\epsilon(s)}(L_1L_2)^{\frac{1}{2}+\epsilon(s)}\|\widehat{u}_{N_0,L_0}\|_{L_{\tau,\xi,\eta}^{2}}\|\widehat{u}_{N_1,L_1}\|_{L_{\tau,\xi,\eta}^{2}}\|\widehat{u}_{N_2,L_2}\|_{L_{\tau,\xi,\eta}^{2}}
        \end{split}
    \end{equation*}
which is \eqref{eq:3.3``}.\\

\noindent{\bf Sub-case 2.2.1.3:} \fbox{  $(\xi_1,\eta_1)\times (\xi,\eta) \in I_1$.} 
 In this case, Proposition 3.18 in \cite{K21} implies that
     \begin{equation}\label{eq: estimate case 3.1.3}
    \begin{split}
        &\left|\int_{**} \widehat{u}_{N_2,L_2}(\xi_2,\eta_2,\tau_2)\chi_{I_1}((\xi_1,\eta_1),(\xi,\eta))\widehat{u}_{N_1,L_1}(\xi_1,\eta_1,\tau_1)\widehat{u}_{N_0,L_0}(\xi,\eta,\tau)d\sigma_1d\sigma\right| \\
        & \qquad \qquad \qquad \qquad \leq CN_1^{-1}N_2^{-\frac{1}{4}}L_0^{\frac{1}{4}}(L_1L_2)^{\frac{1}{2}}\|\widehat{u}_{N_0,L_0}\|_{L_{\tau,\xi,\eta}^{2}}\|\widehat{u}_{N_1,L_1}\|_{L_{\tau,\xi,\eta}^{2}}\|\widehat{u}_{N_2,L_2}\|_{L_{\tau,\xi,\eta}^{2}},
        \end{split}
    \end{equation}
where $d\sigma=d\tau d\xi d\eta$, $d\sigma_1=d\tau_1d\xi_1d\eta_1$ and $\int_{**}$ denotes the integral over the set\break $(\xi_2,\eta_2,\tau_2)=(\xi_1+\xi,\eta_1+\eta, \tau_1+\tau)$.
    Since $N_0 \sim N_1$, 
    \begin{equation}\label{eq:estimate case 3.1.3.1}
        N_0N_1^{-1}N_2^{-\frac{1}{4}} \sim N_2^{-\frac{1}{4}} \leq N_2^s \sim N_1^sN_2^{s} N_0^{-s}.
    \end{equation}
Considering any $0<\epsilon\leq \frac{1}{8}$, one can obtain \eqref{eq:3.3``} from \eqref{eq: estimate case 3.1.3} and \eqref{eq:estimate case 3.1.3.1} since
     \begin{equation*}
    \begin{split}
    &\left|\int_{**} |\xi+\eta|\widehat{u}_{N_2,L_2}(\xi_2,\eta_2,\tau_2)\chi_{I_1}((\xi_1,\eta_1),(\xi,\eta))\widehat{u}_{N_1,L_1}(\xi_1,\eta_1,\tau_1)\widehat{u}_{N_0,L_0}(\xi,\eta,\tau)d\sigma_1d\sigma\right| \\
        &\qquad \leq N_0\left|\int_{**} \widehat{u}_{N_2,L_2}(\xi_2,\eta_2,\tau_2)\chi_{I_1}((\xi_1,\eta_1),(\xi,\eta))\widehat{u}_{N_1,L_1}(\xi_1,\eta_1,\tau_1)\widehat{u}_{N_0,L_0}(\xi,\eta,\tau)d\sigma_1d\sigma\right| \\
        & \qquad \leq CN_0^{-s}N_1^s N_2^sL_0^{\frac{1}{2}-2\epsilon}(L_1L_2)^{\frac{1}{2}+\epsilon}\|\widehat{u}_{N_0,L_0}\|_{L_{\tau,\xi,\eta}^{2}}\|\widehat{u}_{N_1,L_1}\|_{L_{\tau,\xi,\eta}^{2}}\|\widehat{u}_{N_2,L_2}\|_{L_{\tau,\xi,\eta}^{2}}.
        \end{split}
    \end{equation*}

\vspace{0.3cm}
\noindent{\bf Sub-case 2.2.2:} \fbox{  $N_{\min}=N_0$.}
In this case, we have $N_1\sim N_2\sim N_{max}$. As in the \break {\bf Sub-case 2.2.1}, we  will prove the estimate \eqref{eq:3.2``} considering three different sub-cases.\\

\noindent{\bf Sub-case 2.2.2.1:}  \fbox{$(\xi_1,\eta_1)\times (\xi_2,\eta_2) \in I_3$.}
In this case, similarly to \eqref{eq: estimate case 3.1.1}, one has
    \begin{equation}\label{eq: estimate case 3.2.1}
    \begin{split}
        &\left|\int_{*} \widehat{u}_{N_0,L_0}(\xi,\eta,\tau)\chi_{I_3}((\xi_1,\eta_1),(\xi_2,\eta_2))\widehat{u}_{N_1,L_1}(\xi_1,\eta_1,\tau_1)\widehat{u}_{N_2,L_2}(\xi_2,\eta_2,\tau_2)d\sigma_1d\sigma_2\right| \\
        & \qquad \qquad \qquad \qquad \leq CN_1^{-\frac{5}{4}}L_2^{\frac{1}{4}}(L_1L_0)^{\frac{1}{2}}\|\widehat{u}_{N_0,L_0}\|_{L_{\tau,\xi,\eta}^{2}}\|\widehat{u}_{N_1,L_1}\|_{L_{\tau,\xi,\eta}^{2}}\|\widehat{u}_{N_2,L_2}\|_{L_{\tau,\xi,\eta}^{2}},
        \end{split}
    \end{equation}
where $d\sigma_j=d\tau_j d\xi_j d\eta_j$ and $\int_*$ denotes the integral over the set $(\xi,\eta,\tau)=(\xi_1+\xi_2,\eta_1+\eta_2,\tau_1+\tau_2)$.

First, we will show that
\begin{equation}\label{eq: after 2.23}
    N_0N_1^{-\frac{5}{4}}L_0^{\frac{1}{2}} \leq CN_0^{-s}N_1^{s}N_2^{s}L_0^{\frac{1}{2}-2\epsilon(s)}.
\end{equation}

As in the proof of  \eqref{eq: after 2.23`} we divide the proof of \eqref{eq: after 2.23} in two different  cases.

For $s \geq 0$, let $s=-\frac{1}{4}+r$ for a fixed  $r \geq \frac{1}{4}$. Then
    \begin{equation}\label{eq:s>0}
    \begin{split}
        N_0N_1^{-\frac{5}{4}}L_0^{\frac{1}{2}}        &\leq N_0^{\frac{1}{4}}N_1^{-\frac{1}{2}}L_0^{\frac{1}{2}}\\
        &\leq CN_0^{\frac{1}{4}}N_1^{-\frac{1}{2}}N_{\max}^{\frac{1}{4}}L_0^{\frac{5}{12}}\\
        &\leq CN_0^{\frac{1}{4}}N_1^{-\frac{1}{2}}N_{2}^{r}L_0^{\frac{5}{12}}\\
        &\leq CN_0^{\frac{1}{4}-r}N_1^{r}N_1^{-\frac{1}{4}}N_2^{-\frac{1}{4}}N_{2}^{r}L_0^{\frac{5}{12}}\\
        &=CN_0^{-s}N_1^{s}N_2^{s}L_0^{\frac{5}{12}}.
        \end{split}
    \end{equation}
    Thus,  for every $0<\epsilon(s)\leq \frac{1}{24}$ we get the estimate \eqref{eq: after 2.23} from \eqref{eq:s>0}.
    
Now, for $-\frac{1}{4}<s<0$, let $s=-\frac{1}{4}+r$ for a fixed $0<r< \frac{1}{4}$. Then
    \begin{equation}\label{eq:s<0}
        \begin{split}
        N_0N_1^{-\frac{5}{4}}L_0^{\frac{1}{2}}        &\leq N_0^{\frac{1}{4}}N_1^{-\frac{1}{2}}L_0^{\frac{1}{2}}\\
        &\leq N_0^{\frac{1}{4}}N_1^{-\frac{1}{2}}N_{\max}^{r}L_0^{\frac{1}{2}-\frac{r}{3}}\\
        &\sim N_0^{\frac{1}{4}}N_1^{-\frac{1}{4}}N_2^{-\frac{1}{4}}N_{2}^{r}L_0^{\frac{1}{2}-\frac{r}{3}}\\
        &\leq N_0^{\frac{1}{4}-r}N_1^{r}N_1^{-\frac{1}{4}}N_2^{-\frac{1}{4}}N_{2}^{r}L_0^{\frac{1}{2}-\frac{r}{3}}\\
        &=N_0^{-s}N_1^{s}N_2^{s}L_0^{\frac{1}{2}-\frac{r}{3}}.
        \end{split}
    \end{equation}
     Considering $0<\epsilon(s)\leq\frac{r}{6}=\frac{s}{6}+\frac{1}{24}$, we obtain \eqref{eq: after 2.23} from \eqref{eq:s<0}.

Finally, combining \eqref{eq: estimate case 3.2.1} and \eqref{eq: after 2.23}, we get
    \begin{equation*}
    \begin{split}
    &\left|\int_{*} |\xi+\eta|\widehat{u}_{N_0,L_0}(\xi,\eta,\tau)\chi_{I_3}((\xi_1,\eta_1),(\xi_2,\eta_2))\widehat{u}_{N_1,L_1}(\xi_1,\eta_1,\tau_1)\widehat{u}_{N_2,L_2}(\xi_2,\eta_2,\tau_2)d\sigma_1d\sigma_2\right| \\
        &\qquad\leq CN_0\left|\int_{*} \widehat{u}_{N_0,L_0}(\xi,\eta,\tau)\chi_{I_3}((\xi_1,\eta_1),(\xi_2,\eta_2))\widehat{u}_{N_1,L_1}(\xi_1,\eta_1,\tau_1)\widehat{u}_{N_2,L_2}(\xi_2,\eta_2,\tau_2)d\sigma_1d\sigma_2\right| \\
        & \qquad \leq CN_0N_1^{-\frac{5}{4}}L_2^{\frac{1}{4}}(L_1L_0)^{\frac{1}{2}}\|\widehat{u}_{N_0,L_0}\|_{L_{\tau,\xi,\eta}^{2}}\|\widehat{u}_{N_1,L_1}\|_{L_{\tau,\xi,\eta}^{2}}\|\widehat{u}_{N_2,L_2}\|_{L_{\tau,\xi,\eta}^{2}}\\
        & \qquad \leq CN_0^{-s}N_1^{s}N_2^{s}L_0^{\frac{1}{2}-2\epsilon(s)}(L_1L_2)^{\frac{1}{2}}\|\widehat{u}_{N_0,L_0}\|_{L_{\tau,\xi,\eta}^{2}}\|\widehat{u}_{N_1,L_1}\|_{L_{\tau,\xi,\eta}^{2}}\|\widehat{u}_{N_2,L_2}\|_{L_{\tau,\xi,\eta}^{2}}\\
        \end{split}
    \end{equation*}
for every $0<\epsilon(s)\leq \min\{\frac{1}{24},\frac{s}{6}+\frac{1}{24}\}$, thereby obtaining \eqref{eq:3.2``}.\\

\noindent{\bf Sub-case 2.2.2.2:}  \fbox{ $(\xi_1,\eta_1)\times (\xi_2,\eta_2) \in I_2$.} 
In this case, Kinoshita in \cite{K21} (see Proposition $3.25$ there), shows   that
    \begin{equation}\label{eq: estimate case 3.2.2}
    \begin{split}
        &\left|\int_{*} |\xi+\eta|\widehat{u}_{N_0,L_0}(\xi,\eta,\tau)\chi_{I_2}((\xi_1,\eta_1),(\xi_2,\eta_2))\widehat{u}_{N_1,L_1}(\xi_1,\eta_1,\tau_1)\widehat{u}_{N_2,L_2}(\xi_2,\eta_2,\tau_2)d\sigma_1d\sigma_2\right| \\
        & \qquad \qquad \qquad \qquad \leq CN_0^{\frac{1}{4}}N_1^{-\frac{1}{2}}(L_0L_1L_2)^{\frac{1}{2}}\|\widehat{u}_{N_0,L_0}\|_{L_{\tau,\xi,\eta}^{2}}\|\widehat{u}_{N_1,L_1}\|_{L_{\tau,\xi,\eta}^{2}}\|\widehat{u}_{N_2,L_2}\|_{L_{\tau,\xi,\eta}^{2}},
        \end{split}
    \end{equation}
where $d\sigma_j=d\tau_j d\xi_j d\eta_j$ and $\int_*$ denotes the integral over the set $(\xi,\eta,\tau)=(\xi_1+\xi_2,\eta_1+\eta_2,\tau_1+\tau_2)$. Now, it follows from the second line of equations \eqref{eq:s>0} and \eqref{eq:s<0} that
\begin{equation*}
    N_0^{\frac{1}{4}}N_1^{-\frac{1}{2}}L_0^{\frac{1}{2}}\leq CN_0^{-s}N_1^{s}N_2^{s}L_0^{\frac{1}{2}-2\epsilon(s)}
\end{equation*}
for every $0<\epsilon(s) \leq \frac{1}{24}$, when $s\geq 0$, and for every $0<\epsilon(s) \leq \frac{s}{6}+\frac{1}{24}$, when $-\frac{1}{4}<s< 0$. Consequently, we obtain \eqref{eq:3.2``} by the same argument used in the previous case.\\

\noindent{\bf Sub-case 2.2.2.3:}  \fbox{ $(\xi_1,\eta_1)\times (\xi_2,\eta_2) \in I_1$.}
Similarly to \eqref{eq: estimate case 3.1.3}, one has
     \begin{equation}\label{eq: estimate case 3.2.3}
    \begin{split}
        &\left|\int_{*} \widehat{u}_{N_0,L_0}(\xi,\eta,\tau)\chi_{I_1}((\xi_1,\eta_1),(\xi_2,\eta_2))\widehat{u}_{N_1,L_1}(\xi_1,\eta_1,\tau_1)\widehat{u}_{N_2,L_2}(\xi_2,\eta_2,\tau_2)d\sigma_1d\sigma_2\right| \\
        & \qquad \qquad \qquad \qquad \leq CN_1^{-1}N_0^{-\frac{1}{4}}L_2^{\frac{1}{4}}(L_1L_0)^{\frac{1}{2}}\|\widehat{u}_{N_0,L_0}\|_{L_{\tau,\xi,\eta}^{2}}\|\widehat{u}_{N_1,L_1}\|_{L_{\tau,\xi,\eta}^{2}}\|\widehat{u}_{N_2,L_2}\|_{L_{\tau,\xi,\eta}^{2}},
        \end{split}
    \end{equation}
where $d\sigma_j=d\tau_j d\xi_j d\eta_j$ and $\int_*$ denotes the integral over the set $(\xi,\eta,\tau)=(\xi_1+\xi_2,\eta_1+\eta_2,\tau_1+\tau_2)$.
    Note that
\begin{equation*}
    N_0N_1^{-1}N_0^{-\frac{1}{4}}L_0^{\frac{1}{2}}=N_0^{\frac{3}{4}}N_1^{-1}L_0^{\frac{1}{2}}\leq N_0^{\frac{1}{4}}N_1^{\frac{1}{2}}N_1^{-1}L_0^{\frac{1}{2}}=N_0^{\frac{1}{4}}N_1^{-\frac{1}{2}}L_0^{\frac{1}{2}}
\end{equation*}
and the estimate \eqref{eq:3.2``} follows from the same argument used in the preceding cases.\\

Finally, we analyse the values that the choice of $\epsilon(s)$ can assume in order to satisfy \eqref{eq:estimate partial derivative ZK}. Taking in consideration the various cases outlined previously, it becomes clear that for $s\geq 0$, one can take any $\epsilon=\epsilon(s)$ within the interval $\left(0,\frac{1}{24}\right]$. On the other hand, for $-\frac{1}{4}<s<0$, it has been demonstrated that in some cases, $\epsilon(s)$ can range across the interval $\left(0,\frac{1}{24}\right]$, while in others, it must adhere to $\epsilon(s) \in \left(0,\frac{s}{6}+\frac{1}{24}\right]$. In conclusion, given $s>-\frac{1}{4}$, the estimate \eqref{eq:estimate partial derivative ZK} is valid for any $0<\epsilon(s)\leq \min\left\lbrace\frac{1}{24},\frac{s}{6}+\frac{1}{24}\right\rbrace$. From this consideration, it is obvious that $\epsilon(s)\leq \epsilon(0)$ as advertised.
\end{proof}

The following result is the analytic version of Lemma \ref{lemma:estimates partial ZK} in Gevrey- Bourgain's space.
\begin{proposition}\label{prop: estimates ZK} Let  $\sigma \geq 0$ and $s>-\frac{1}{4}$. Then, for $\epsilon (s)=min(\frac{1}{24}, \frac{s}{6}+\frac{1}{24}) $, we have
\begin{equation}\label{eq:estimate partial derivative sigma}
\|(\partial_x+\partial_y)(u_1u_2)\|_{X^{\sigma,s,-\frac{1}{2}+2\epsilon(s)}} \leq C\|u_1\|_{X^{\sigma,s,\frac{1}{2}+\epsilon(s)}}\|u_2\|_{X^{\sigma,s,\frac{1}{2}+\epsilon(s)}},
\end{equation}
where $C>0$ depends on $s$.
\end{proposition}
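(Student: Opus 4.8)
The plan is to reduce Proposition \ref{prop: estimates ZK} to the non-analytic estimate of Lemma \ref{lemma:estimates partial ZK} via the standard substitution trick, exploiting the identity \eqref{eq:exponential}. Set $v_i=e^{\sigma|D_{x,y}|}u_i$, so that \eqref{eq:exponential} gives $\|u_i\|_{X^{\sigma,s,\frac12+\epsilon(s)}}=\|v_i\|_{X^{s,\frac12+\epsilon(s)}}$, and likewise $\|(\partial_x+\partial_y)(u_1u_2)\|_{X^{\sigma,s,-\frac12+2\epsilon(s)}}=\|e^{\sigma|D_{x,y}|}(\partial_x+\partial_y)(u_1u_2)\|_{X^{s,-\frac12+2\epsilon(s)}}$. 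It therefore suffices to bound the last quantity by $C\|v_1\|_{X^{s,\frac12+\epsilon(s)}}\|v_2\|_{X^{s,\frac12+\epsilon(s)}}$.

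The key step is a pointwise estimate on the space-time Fourier side. Writing $\gamma=\gamma_1+\gamma_2$ with $\gamma_j=(\xi_j,\eta_j)$ and $\tau=\tau_1+\tau_2$, subadditivity of $|\cdot|=|\xi|+|\eta|$ gives $|\gamma|\le|\gamma_1|+|\gamma_2|$, hence $e^{\sigma|\gamma|}\le e^{\sigma|\gamma_1|}e^{\sigma|\gamma_2|}$ for $\sigma\ge 0$. Consequently the space-time Fourier transform of $e^{\sigma|D_{x,y}|}(\partial_x+\partial_y)(u_1u_2)$ is dominated in absolute value (up to the fixed Fourier normalization constant) by $|\xi+\eta|\bigl(|\widehat{v_1}|*|\widehat{v_2}|\bigr)$. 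Introducing $V_i$ with $\widehat{V_i}=|\widehat{v_i}|\ge 0$, this quantity equals $\bigl|\widehat{(\partial_x+\partial_y)(V_1V_2)}\bigr|$, since the convolution of the two nonnegative functions $\widehat{V_1},\widehat{V_2}$ is itself nonnegative. Because the $X^{s,b}$ norm, in either the integral form of its definition or the dyadic form recalled above, depends only on $|\widehat{\cdot}|$ pointwise, it is monotone under such a domination; this holds for every $s,b\in\R$, in particular for the negative modulation index $-\frac12+2\epsilon(s)$.

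Putting the pieces together: $\|e^{\sigma|D_{x,y}|}(\partial_x+\partial_y)(u_1u_2)\|_{X^{s,-\frac12+2\epsilon(s)}}\le C\|(\partial_x+\partial_y)(V_1V_2)\|_{X^{s,-\frac12+2\epsilon(s)}}$; applying Lemma \ref{lemma:estimates partial ZK} with the same $\epsilon(s)=\min\{\tfrac1{24},\tfrac{s}{6}+\tfrac1{24}\}$ bounds this by $C\|V_1\|_{X^{s,\frac12+\epsilon(s)}}\|V_2\|_{X^{s,\frac12+\epsilon(s)}}$; and finally $\|V_i\|_{X^{s,b}}=\|v_i\|_{X^{s,b}}=\|u_i\|_{X^{\sigma,s,b}}$, again because the norm sees only $|\widehat{\cdot}|$, together with \eqref{eq:exponential}. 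This yields \eqref{eq:estimate partial derivative sigma}; the case $\sigma=0$ is exactly Lemma \ref{lemma:estimates partial ZK}.

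There is essentially no analytic obstacle here: all the genuinely new work — the refined choice of $\epsilon(s)$ and the extensive case analysis — is already contained in Lemma \ref{lemma:estimates partial ZK}. The only points requiring minor care are (i) verifying that the weight $e^{\sigma|\gamma|}$ with $|\gamma|=|\xi|+|\eta|$ is subadditive under the frequency splitting $\gamma=\gamma_1+\gamma_2$ (it is, being generated by a norm), and (ii) arranging the reduction to $V_i$ with nonnegative Fourier transform so that the convolution bound becomes a genuine pointwise domination of Fourier transforms, to which the monotonicity of the $X^{s,b}$ norm applies, including for the negative value of $b$ on the left-hand side.
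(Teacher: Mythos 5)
Your proof is correct and follows essentially the same route as the paper's (which only sketches it, citing \cite{BFH}): the subadditivity $e^{\sigma|\gamma|}\le e^{\sigma|\gamma_1|}e^{\sigma|\gamma_2|}$ on the Fourier side, the substitution $u_i\mapsto e^{\sigma|D_{x,y}|}u_i$, and an application of Lemma \ref{lemma:estimates partial ZK} with the same $\epsilon(s)$. Your explicit handling of the majorants $V_i$ with nonnegative Fourier transforms and the monotonicity of the $X^{s,b}$ norm under pointwise Fourier domination is exactly the detail the paper leaves to the reference, so nothing is missing.
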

\begin{proof}
The proof of this proposition follows by applying the inequality $e^{\sigma|\gamma|}\leq e^{\sigma|\gamma-\gamma_{1}-\gamma_2|}e^{\sigma|\gamma_1|}e^{\sigma|\gamma_2|}$ and the estimate \eqref{eq:estimate partial derivative ZK} for $e^{\sigma|D_{x,y}|}u_i$ in place of $u_i$ for $i=1,2$. For a more detailed proof we refer to \cite{BFH}.
\end{proof}

Concerning the IVP \eqref{eq:gZK} with $k=2$, Kinoshita in \cite{K22} proved the following trilinear estimate in order to obtain the local well-posedness in $H^s(\R^2)$ for $s\geq \frac{1}{4}$.
\begin{lemma} (\cite{K22})  \label{lemma:estimates partial mZK} Let $s\geq\frac{1}{4}$. Then, for all $0 <\epsilon \ll 1$, we have
\begin{equation}\label{eq:estimate partial derivative mZK}
\|(\partial_x+\partial_y)(u_1 u_2 u_3)\|_{X^{s,-\frac{1}{2}+2\epsilon}} \leq C\prod_{i=1}^{3}\|u_{i}\|_{X^{s,\frac{1}{2}+\epsilon}},
\end{equation}
where $C>0$ depends on $s$ and $\epsilon$.
\end{lemma}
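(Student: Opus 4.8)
The plan is to prove \eqref{eq:estimate partial derivative mZK} by the Fourier restriction norm method, following the scheme of the bilinear argument used for \eqref{eq:estimate partial derivative ZK} above (this is essentially the route taken in \cite{K22}). First I would pass to the dual, quadrilinear-form formulation: by duality and Littlewood--Paley decomposition in both the spatial frequency and the modulation variables, \eqref{eq:estimate partial derivative mZK} reduces to proving, for all dyadic $N_j,L_j$ with $j=0,1,2,3$, an estimate of the form
\begin{equation*}
N_0\left|\int u_{N_0,L_0}u_{N_1,L_1}u_{N_2,L_2}u_{N_3,L_3}\,dt\,dx\,dy\right| \le C\,\frac{N_1^s N_2^s N_3^s}{N_0^s}\,L_0^{\frac12-2\epsilon}(L_1L_2L_3)^{\frac12+\epsilon}\prod_{j=0}^3\|u_{N_j,L_j}\|_{L^2_{t,x,y}},
\end{equation*}
where $u_{N_j,L_j}=Q_{L_j}P_{N_j}u_j$, and then summing over the dyadic parameters; the factor $N_0$ on the left comes from $|\xi_0+\eta_0|\lesssim N_0$ on the support of the integrand, and absorbing this loss of one derivative is the heart of the matter.

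The case distinctions are organized by the size of the largest modulation $L_{\max}=\max_j L_j$ relative to $N_{\max}^3$, steered by the resonance identity valid on the support of the integrand, where $\xi_0=\xi_1+\xi_2+\xi_3$, $\eta_0=\eta_1+\eta_2+\eta_3$, $\tau_0=\tau_1+\tau_2+\tau_3$, namely
\begin{equation*}
(\tau_0-\xi_0^3-\eta_0^3)-\sum_{j=1}^3(\tau_j-\xi_j^3-\eta_j^3) = -3\big[(\xi_1+\xi_2)(\xi_2+\xi_3)(\xi_3+\xi_1)+(\eta_1+\eta_2)(\eta_2+\eta_3)(\eta_3+\eta_1)\big].
\end{equation*}
When all four frequencies are $O(1)$ the estimate is immediate from the $L^4$ Strichartz bound of Lemma \ref{lemma: Strichartz Q_L} with $p=q=4$, exactly as in \eqref{eq:trivial case}. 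In the high-modulation regime $L_{\max}\gtrsim N_{\max}^3$ one extracts a suitable power of $L_{\max}$ from the resonance relation, estimates the remaining quadrilinear integral by H\"older combined with the $L^5$/$L^{10}$ Strichartz estimates and their modulation-localized refinements (Lemmas \ref{lemma: L2 norm of product mZK} and \ref{lemma: Strichartz Q_L}), and checks that the resulting $N_{\max}$-gain dominates the factor $N_0$ for every $s\ge\frac14$; this last verification is the analogue of the estimate \eqref{eq:2.11} and is elementary.

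The delicate regime is the low-modulation one, $L_{\max}\ll N_{\max}^3$, in which the resonance above is small and the modulations provide no gain. Here I would decompose the frequency domain into angular sectors and distinguish \emph{transversal} (non-parallel) interactions, where a bilinear Strichartz-type estimate applied to the pair of inputs with the largest transversality recovers the smoothing needed to absorb $N_0$, from \emph{resonant} (nearly collinear) interactions. In the resonant case the curvature of the characteristic surface $\{\tau=\xi^3+\eta^3\}$ still produces a gain, but extracting it requires an almost-orthogonal decomposition of the relevant frequency regions into tubes and boxes adapted to the geometry --- precisely the content of the propositions of \cite{K21,K22} that were invoked in the proof of Lemma \ref{lemma:estimates partial ZK}. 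I expect this resonant low-modulation case, together with the bookkeeping showing that the geometric gain always beats the single derivative $N_0$ down to the endpoint $s=\frac14$, to be the main obstacle; the remaining cases reduce to routine applications of H\"older's inequality and the Strichartz estimates already recorded. The Gevrey--Bourgain version, needed in later sections, then follows as in Proposition \ref{prop: estimates ZK} from $e^{\sigma|\gamma_0|}\le e^{\sigma|\gamma_1|}e^{\sigma|\gamma_2|}e^{\sigma|\gamma_3|}$, which holds since $\gamma_0=\gamma_1+\gamma_2+\gamma_3$ and $|\cdot|$ is subadditive.
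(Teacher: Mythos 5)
The paper offers no proof of this lemma at all: it is quoted directly from Kinoshita \cite{K22}, so there is no internal argument to compare yours against line by line. Judged on its own terms, your outline correctly reproduces the framework of \cite{K22} (and of the bilinear proof of Lemma \ref{lemma:estimates partial ZK} given in the paper): duality and dyadic localization in frequency and modulation, the resonance identity, which you state correctly since the $\tau$'s cancel and $(\xi_1+\xi_2+\xi_3)^3-\xi_1^3-\xi_2^3-\xi_3^3=3(\xi_1+\xi_2)(\xi_2+\xi_3)(\xi_3+\xi_1)$, the split into high modulation $L_{\max}\gtrsim N_{\max}^3$ versus low modulation, the further split into transversal and nearly collinear interactions, and the closing Gevrey remark via subadditivity of $|\gamma|$, which is exactly the observation behind Proposition \ref{prop: estimates ZK}.

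As a proof, however, the proposal has a genuine gap, and you essentially name it yourself: everything that makes the trilinear estimate true down to the endpoint $s=\frac14$ --- the bilinear transversal Strichartz bounds, the almost-orthogonal decomposition into tubes and boxes in the nearly collinear low-modulation regime, and the verification that the resulting gains absorb the full derivative $N_0$ with the asymmetric modulation weights $L_0^{\frac12-2\epsilon}(L_1L_2L_3)^{\frac12+\epsilon}$ while keeping the dyadic sums convergent --- is deferred to ``the propositions of \cite{K21,K22}'' without being carried out, or even quoted precisely enough to check that they apply in the quadrilinear setting. Those propositions \emph{are} the content of the result; in particular the endpoint $s=\frac14$, which is the reason the statement reads $s\ge\frac14$ rather than $s>\frac14$, receives no argument, and the claim that the high-modulation bookkeeping is ``elementary'' is asserted rather than shown (the analogue of \eqref{eq:2.11} in the trilinear setting involves three output weights $N_1^sN_2^sN_3^sN_0^{-s}$ and is where the threshold $s\ge\frac14$ enters). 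So what you have is a plausible roadmap of Kinoshita's proof rather than a proof; that is in fact how the paper itself treats the lemma (by citation), but your write-up should present it as such.
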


The following is the analytic version of the previous result.

\begin{proposition}\label{prop:estimates} Let  $\sigma \geq 0$ and $s\geq\frac{1}{4}$ be given.Then, for all $0 <\epsilon \ll 1$, we have
\begin{equation}\label{eq:partial estimates mZK}
\|(\partial_x+\partial_y)(u_1 u_2 u_3)\|_{X^{\sigma,   s,  -\frac{1}{2}+2\epsilon}} \leq C\prod_{i=1}^{3}\|u_{i}\|_{X^{\sigma,  s,\frac{1}{2}+\epsilon}},
\end{equation}
where $C>0$ depends on $s$ and $\epsilon$.
\end{proposition}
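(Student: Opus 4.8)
The plan is to derive the analytic estimate \eqref{eq:partial estimates mZK} from the Sobolev-level trilinear estimate \eqref{eq:estimate partial derivative mZK} of Lemma \ref{lemma:estimates partial mZK} by transplanting the exponential weight, exactly as Proposition \ref{prop: estimates ZK} was obtained from Lemma \ref{lemma:estimates partial ZK}. First I would reduce to the case $\sigma > 0$, since $\sigma = 0$ is precisely \eqref{eq:estimate partial derivative mZK}. Then the key pointwise inequality is the triangle-type bound for the exponential weight on the Fourier side: writing $\gamma,\gamma_1,\gamma_2,\gamma_3$ for the output and input frequencies subject to the convolution constraint $\gamma=\gamma_1+\gamma_2+\gamma_3$, one has
\begin{equation*}
e^{\sigma|\gamma|}\le e^{\sigma|\gamma_1|}e^{\sigma|\gamma_2|}e^{\sigma|\gamma_3|},
\end{equation*}
because $|\gamma| = |\xi|+|\eta| \le \sum_{i}(|\xi_i|+|\eta_i|) = \sum_i |\gamma_i|$ by the triangle inequality applied coordinatewise. (Here I am using $|\gamma|=|\xi|+|\eta|$ as fixed in the paper's notation, which makes the weight genuinely subadditive, so unlike in Lemma \ref{lemma: inequality for e} no error term is even needed — plain subadditivity suffices.)

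Next I would set $v_i = e^{\sigma|D_{x,y}|}u_i$ for $i=1,2,3$, so that $\widehat{v_i}(\gamma_i,\tau_i) = e^{\sigma|\gamma_i|}\widehat{u_i}(\gamma_i,\tau_i)$ and, by \eqref{eq:exponential}, $\|v_i\|_{X^{s,\frac12+\epsilon}} = \|u_i\|_{X^{\sigma,s,\frac12+\epsilon}}$. The left-hand side of \eqref{eq:partial estimates mZK} is $\|(\partial_x+\partial_y)(u_1u_2u_3)\|_{X^{\sigma,s,-\frac12+2\epsilon}}$, whose Fourier representation carries the weight $e^{\sigma|\gamma|}\langle\gamma\rangle^s\langle\tau-\xi^3-\eta^3\rangle^{-\frac12+2\epsilon}|\xi+\eta|\widehat{u_1u_2u_3}$. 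Using $\widehat{u_1u_2u_3} = \widehat{u_1}\ast\widehat{u_2}\ast\widehat{u_3}$ and the pointwise bound above inside the convolution integral, I can dominate this (after taking absolute values inside the integral, i.e. comparing with the same expression where each $\widehat{u_i}$ is replaced by $|\widehat{u_i}| = |\widehat{v_i}|$) by the corresponding quantity for $v_1,v_2,v_3$ without the exponential weight, namely $\|(\partial_x+\partial_y)(\tilde v_1\tilde v_2\tilde v_3)\|_{X^{s,-\frac12+2\epsilon}}$ where $\tilde v_i$ has Fourier transform $|\widehat{v_i}|$. Since replacing $\widehat{v_i}$ by its modulus changes neither $X^{s,b}$-norms nor the validity of \eqref{eq:estimate partial derivative mZK} (the estimate is proved by putting absolute values everywhere anyway), applying Lemma \ref{lemma:estimates partial mZK} to $\tilde v_1,\tilde v_2,\tilde v_3$ gives the bound $C\prod_i\|\tilde v_i\|_{X^{s,\frac12+\epsilon}} = C\prod_i\|v_i\|_{X^{s,\frac12+\epsilon}} = C\prod_i\|u_i\|_{X^{\sigma,s,\frac12+\epsilon}}$, which is exactly \eqref{eq:partial estimates mZK}.

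There is essentially no genuine obstacle here; the only point requiring a little care is the bookkeeping that lets one pass absolute values through the trilinear convolution and invoke the Sobolev estimate in the form where the inputs are the moduli of the Fourier transforms — this is standard and is precisely the mechanism referenced in \cite{BFH}. I would therefore keep the write-up short, stating the subadditivity inequality for $e^{\sigma|\cdot|}$ explicitly, noting the substitution $u_i\mapsto e^{\sigma|D_{x,y}|}u_i$ together with \eqref{eq:exponential}, remarking that \eqref{eq:estimate partial derivative mZK} is insensitive to replacing Fourier transforms by their absolute values, and then citing \cite{BFH} for the routine details, exactly in parallel with the proof of Proposition \ref{prop: estimates ZK}.
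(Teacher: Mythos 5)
Your proposal is correct and follows essentially the same route as the paper: the paper's proof of Proposition \ref{prop:estimates} (via Proposition \ref{prop: estimates ZK}) likewise consists of the subadditivity bound $e^{\sigma|\gamma|}\leq \prod_i e^{\sigma|\gamma_i|}$ on the convolution constraint, the substitution $u_i\mapsto e^{\sigma|D_{x,y}|}u_i$ together with \eqref{eq:exponential}, an application of the non-analytic estimate \eqref{eq:estimate partial derivative mZK}, and a reference to \cite{BFH} for the routine bookkeeping with absolute values. Nothing essential is missing.
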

\begin{proof}
The proof follows the same ideia of the proof of Propostion \ref{prop: estimates ZK}.
\end{proof}

\section{Local well-posedness - Proof of Theorems \ref{teo:local ZK} and \ref{teo:local mZK}}\label{sec:3}
In this section we use the estimates from the previous section and provide  proofs for the local well-posedness results to the IVP \eqref{eq:gZK} with $k=1$ and $k=2$ and for given real analytic initial data. For the sake of completeness we provide a proof for the case $k=1$. The proof for the case $k=2$ follows similarly.

\begin{proof}[Proof of Theorem \ref{teo:local ZK}]
Let $\sigma>0$, $k=1$ and $u_0 \in G^{\sigma,   s}(\R^2)$ with $s>-\frac{1}{4}$. For $0<T\leq 1$, let $\psi_T$ be the cut-off function defined in \eqref{eq:cut-off} and consider a solution map given by
\begin{equation*}
\Phi_T(u)=\psi(t)W(t)u_0-\psi_T(t)\int_0^t W(t-t')\mu a (\partial_x+\partial_y)u^{2}dt'.
\end{equation*}
Our main goal is to show that there are $b>\frac{1}{2}$, $r>0$ and $T_0=T_0(\|u_0\|_{G^{\sigma, s}})>0$ such that $\Phi_{T_0}:B(r) \rightarrow B(r)$ is a contraction map, where
\begin{equation*}
B(r)=\left\lbrace u \in X_{T_0}^{\sigma,   s,   b}: \|u\|_{X^{\sigma,  s,   b}}\leq r\right\rbrace.
\end{equation*}

For $u \in B(r)$, applying the linear inequalities \eqref{eq:semigroup 1} and \eqref{eq:semigroup 2} and the bilinear estimate \eqref{eq:estimate partial derivative sigma} with $b=\frac{1}{2}+\epsilon(s)$ and $b'=\frac{1}{2}+2\epsilon(s)$ as in the Proposition \ref{prop: estimates ZK}, we obtain
\begin{equation}\label{eq: well defined}
\begin{split}
\|\Phi_{T_0}(u)\|_{X^{\sigma,  s,   b}} & \leq \|\psi(t)W(t)u_0\|_{X^{\sigma,  s,   b}}+\left\|\psi_{T_0}(t)\int_0^t W(t-t')\mu a(\partial_x+\partial_y)u^{2}dt'\right\|_{X^{\sigma,  s,   b}}\\
&\leq C\|u_0\|_{G^{\sigma,   s}}+CT_0^{\frac{1}{d}}\|(\partial_x+\partial_y)u^{2}\|_{X^{\sigma,  s,   b'-1}} \\
&\leq C\|u_0\|_{G^{\sigma,   s}}+CT_0^{\frac{1}{d}}\|u\|_{X^{\sigma,  s,   b}}^{2} \\
&\leq \frac{r}{2}+CT_0^{\frac{1}{d}}r^{2},
\end{split}
\end{equation}
where $\frac{1}{d}=b'-b$ and $r=2C\|u_0\|_{G^{\sigma,   s}}$.

By choosing
\begin{equation}\label{eq: condition T0 1}
T_0 \leq \frac{1}{(2Cr)^d},
\end{equation}
one gets from \eqref{eq: well defined} that $\|\Phi_{T_0}(u)\|_{X^{\sigma,  s,   b}} \leq r$ for all $u \in B(r)$ showing that $\Phi_{T_0}(B(r))\subset B(r)$.

Now, for $u$, $v \in B(r)$, using \eqref{eq:semigroup 2} once again, we have
\begin{equation*}
\|\Phi_{T_0}(u)-\Phi_{T_0}(v)\|_{X^{\sigma,  s,   b}} \leq CT_{0}^{\frac{1}{d}}\|(\partial_x+\partial_y)u^2-(\partial_x+\partial_y)v^{2}\|_{X^{\sigma,  s,   b'-1}}.
\end{equation*}
Since
\begin{equation*}
u^{2}-v^{2}=(u-v)(u+v),
\end{equation*}
from  \eqref{eq:estimate partial derivative sigma}, we conclude that
\begin{equation*}
\begin{split}
\|\Phi_{T_0}(u)-\Phi_{T_0}(v)\|_{X^{\sigma,  s,   b}} &\leq CT_{0}^{\frac{1}{d}}\left\|(\partial_x+\partial_y)\left((u-v)(u+v)\right)\right\|_{X^{\sigma,  s,   b'-1}}\\
&\leq CT_{0}^{\frac{1}{d}}\|u-v\|_{X^{\sigma,  s,   b}}\left(\|u\|_{X^{\sigma,  s,   b}}+\|v\|_{X^{\sigma,  s,   b}}\right)\\
&\leq CT_{0}^{\frac{1}{d}}r\|u-v\|_{X^{\sigma,  s,   b}}
\end{split}
\end{equation*}
By choosing 
\begin{equation}\label{eq: condition T0 2}
T_0 \leq \frac{1}{(2Cr)^d},
\end{equation}
it follows that
\begin{equation*}
\|\Phi_{T_0}(u)-\Phi_{T_0}(v)\|_{X^{\sigma,  s,   b}} \leq \frac{1}{2}\|u-v\|_{X^{\sigma,  s,   b}}
\end{equation*}
and $\Phi_{T_0}$ is a contraction map.

Finally, it is sufficient to choose $0<T_0\leq 1$ satisfying \eqref{eq: condition T0 1} and \eqref{eq: condition T0 2}. More precisely, considering
\begin{equation}\label{eq:T0}
T_0=\frac{c_0}{(1+\|u_0\|_{G^{\sigma,   s}}^2)^{\frac{d}{2}}}
\end{equation}  for an appropriate constant $c_0>0$ depending on $s$ and $b$, we conclude that $\Phi_{T_0}$ admits a unique fixed point which is a local solution of the IVP \eqref{eq:new gZK}. Moreover, the solution satisfies 
\begin{equation}\label{eq:bound for solution ZK}
\|u\|_{X_{T_0}^{\sigma,  s,   \frac{1}{2}+\epsilon(s)}}\leq r=C\|u_0\|_{G^{\sigma,   s}},
\end{equation}
where $\epsilon (s)=\min(\frac{1}{24}, \frac{s}{6}+\frac{1}{24})$.

The continuous dependence on the initial data follows by a similar argument and the proof is complete.
\end{proof}

\begin{proof}[Proof of Theorem \ref{teo:local mZK}]
    The proof is similar to that of Theorem \ref{teo:local ZK}. The only difference it that in this case we use the trilinear estimate \eqref{eq:partial estimates mZK} from Proposition \ref{prop:estimates}. So, we omit the details.

\end{proof}

\section{Almost conserved quantity}\label{sec:4}
This section is dedicated to define almost conserved quantities associated to the ZK equation and the mZK equation and find their growth estimates  in order to apply the local results repeatedly in adequate small time subintervals to cover arbitrary time interval $[-T,   T]$, for any $T>0$. Taking in consideration the conserved quantities \eqref{eq:mass conversation} and \eqref{eq:new energy conservation}, for the ZK equation, we define
\begin{equation}\label{eq:almost conserved ZK}
M_\sigma (t)= \|u(t)\|^{2}_{G^{\sigma,0}}.
\end{equation}
and for the mZK equation, we define
\begin{equation}\label{eq:almost conserved mZK}
E_\sigma (t)= \|u(t)\|^{2}_{G^{\sigma,1}}-\int \partial_x (e^{\sigma |D_{x,y}|}u)\partial_y (e^{\sigma |D_{x,y}|}u)dxdy-  \frac{\mu a}{2}\|e^{\sigma |D_{x,y}|}u\|_{L_{x,y}^4}^4.
\end{equation}

Note that, for $\sigma=0$, \eqref{eq:almost conserved ZK} and \eqref{eq:almost conserved mZK} turn out to be the conserved quantities \eqref{eq:mass conversation} and \eqref{eq:new energy conservation}, respectively. However, for $\sigma>0$ these quantities fail to be conserved in time and we will prove that  they are almost conserved by establishing growth estimates. 

\subsection{Almost conserved quantity at $L^2$-level of Sobolev regularity} 

In this subsection, we find a growth estimate for $M_\sigma (t)$ defined in \eqref{eq:almost conserved ZK} and consequently prove that it is an almost conserved quantity at the $L^2$-level of Sobolev regularity. 

Denoting $U=e^{\sigma|D_{x,y}|}u$ and differentiating $M_\sigma (t)$ given by \eqref{eq:almost conserved ZK} with respect to $t$, we obtain 
\begin{equation}\label{eq:partial M}
\begin{split}
        \frac{d}{dt}(M_\sigma(t))&=2\int U \partial_tUdxdy.
\end{split}
    \end{equation}
    Applying the operator $e^{\sigma|D_{x,y}|}$ to the ZK equation \eqref{eq:new gZK} with $k=1$, we get
    \begin{equation}\label{eq:op in ZK}
        \partial_t U +(\partial_x^3+\partial_y^3)U+\mu a(\partial_x+\partial_y) U^2=F(U),
        \end{equation}
        where $F(U)$ is given by
\begin{equation}\label{eq:F}
F(U)=\mu a (\partial_x+\partial_y)\left[U^{2}-e^{\sigma |D_{x,y}|}\big((e^{-\sigma |D_{x,y}|}U)^{2}\big)\right].
\end{equation}       
 Using \eqref{eq:op in ZK} in \eqref{eq:partial M}, one has
\begin{equation*}
    \begin{split}
            \frac{d}{dt} M_{\sigma}(t)=&-2\int U\partial_x^3Udxdy-2\int U\partial_y^3Udxdy-2\mu a \int U \partial_xU^{2}dxdy-\\
            &-2\mu a\int U \partial_yU^{2}dxdy+2\int UF(U)dxdy.
    \end{split}
\end{equation*}
We can assume that $U$ and all its partial derivatives tend to zero as $|(x,y)| \rightarrow \infty$ (see \cite{SS} for a detailed argument). With this consideration, it follows from integration by parts that
\begin{equation} \label{eq:partial M new}
       \frac{d}{dt}  M_{\sigma}(t)=2\int UF(U)dxdy.
\end{equation}

Integrating \eqref{eq:partial M new} in time over $[0, t']$ for $0<t'\leq T$, we obtain
\begin{equation}\label{eq:estimate M}
    M_\sigma(t') =M_\sigma(0)+R_\sigma(t'),
\end{equation}
where
\begin{equation}\label{eq:R ZK}
\begin{split}
    R_\sigma(t')=&  \, 2\int \int \chi_{[0, t']}UF(U)dxdydt.
    \end{split} 
\end{equation}

Our objective is to find appropriate estimates for $R_{\sigma}(t')$ and use it to get control on the growth of $M_{\sigma}(t')$. For this purpose, we first find estimates for $F(U)$ in the Bourgain's space norm. From Lemma \ref{lemma:estimates partial ZK}, for $s>-\frac{1}{4}$ and $\epsilon(s)=min(\frac{1}{24}, \frac{s}{6}+\frac{1}{24}) $, one has
\begin{equation}\label{eq:remark}
    \|(\partial_x+\partial_y)(uv)\|_{X^{s,-\frac{1}{2}+\epsilon(s)}} \leq C\|u\|_{X^{s,\frac{1}{2}+\epsilon(s)}}\|v\|_{X^{s,\frac{1}{2}+\epsilon(s)}}.
\end{equation}

If we consider $\widehat{f}(\gamma,\tau)=\frac{|\widehat{u}|(\gamma,\tau)}{\langle \gamma\rangle^{s}}$ and $\widehat{g}(\gamma,\tau)=\frac{|\widehat{v}|(\gamma,\tau)}{\langle \gamma\rangle^{s}}$, equation \eqref{eq:remark} implies
\begin{equation}\label{eq:equiv est}
\begin{split}
    \Bigg|\Bigg| \frac{(\xi+\eta)\langle\gamma\rangle^s}{\langle \tau-\xi^3-\eta^3\rangle^{\frac{1}{2}-\epsilon(s)}}  \int_{\R^3} \widehat{f}(\gamma-\gamma_1,\tau-\tau_1)\widehat{g}(\gamma_1,\tau_1)d\gamma_1d\tau_1 \Bigg|\Bigg|_{L^2_{\tau,\xi,\eta}} &\leq C\|f\|_{X^{s,\frac{1}{2}+\epsilon(s)}}\|g\|_{X^{s,\frac{1}{2}+\epsilon(s)}}\\
    & \leq C\|u\|_{X^{0,\frac{1}{2}+\epsilon(s)}}\|v\|_{X^{0,\frac{1}{2}+\epsilon(s)}}.
\end{split}
\end{equation}

Using this estimate, one can prove the following result.

\begin{lemma}\label{lemma: estimate B}
    For $\theta \in [0, \frac{1}{4})$, consider the bilinear operator defined via Fourier transform  by
\begin{equation*}
    \widehat{B_\theta(u,v)}(\gamma,\tau)=\int_{\R^3} [\min(|\gamma-\gamma_1|,|\gamma_1|)]^{\theta}\widehat{u}(\gamma-\gamma_1,\tau-\tau_1)\widehat{v}(\gamma_1,\tau_1)d\gamma_1d\tau_1.
\end{equation*}
Then, for $\epsilon(-\theta)=\min(\frac{1}{24}, \frac{-\theta}{6}+\frac{1}{24})=\frac{-\theta}{6}+\frac{1}{24}$, one has
\begin{equation*}
    \|(\partial_x+\partial_y)B_\theta(u,v)\|_{X^{0,-\frac{1}{2}+\epsilon(-\theta)}} \leq C\|u\|_{X^{0,\frac{1}{2}+\epsilon(-\theta)}}\|v\|_{X^{0,\frac{1}{2}+\epsilon(-\theta)}}
\end{equation*}
\end{lemma}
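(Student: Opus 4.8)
The plan is to reduce the bilinear estimate for $B_\theta$ to the already-available estimate \eqref{eq:equiv est} by a Littlewood--Paley argument, absorbing the multiplier $[\min(|\gamma-\gamma_1|,|\gamma_1|)]^\theta$ into a small power of the larger frequency at the cost of lowering the Sobolev index by $\theta$. First I would dyadically decompose $u=\sum_{N_1}P_{N_1}u$ and $v=\sum_{N_2}P_{N_2}v$, so that on each piece the frequencies $|\gamma-\gamma_1|\sim N_1$ and $|\gamma_1|\sim N_2$ are essentially constant. On such a block, $[\min(|\gamma-\gamma_1|,|\gamma_1|)]^\theta\lesssim (\min(N_1,N_2))^\theta \le N_1^{\theta/2}N_2^{\theta/2}$, so the symbol of $B_\theta$ is pointwise dominated (up to a constant) by $\langle\gamma-\gamma_1\rangle^{\theta/2}\langle\gamma_1\rangle^{\theta/2}$ times the symbol of ordinary multiplication. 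Hence $B_\theta(u,v)$ is controlled, block by block, by the ordinary product of $\langle D\rangle^{\theta/2}u$ and $\langle D\rangle^{\theta/2}v$, whose relevant frequencies still live at scales $N_1,N_2$; summing the dyadic pieces (which is harmless because the output frequency $|\gamma|\lesssim\max(N_1,N_2)$ forces almost-orthogonality in the standard way) reduces matters to a single multilinear estimate.

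Next I would invoke \eqref{eq:equiv est} with the choice $s=-\theta$ (note $-\theta\in(-\tfrac14,0]$ since $\theta\in[0,\tfrac14)$, so Lemma \ref{lemma:estimates partial ZK} applies with $\epsilon(-\theta)=\min(\tfrac1{24},\tfrac{-\theta}{6}+\tfrac1{24})=\tfrac{-\theta}{6}+\tfrac1{24}$). Concretely, set $\widehat f = |\widehat{u}|\langle\gamma\rangle^{-(-\theta)}=|\widehat u|\langle\gamma\rangle^{\theta}$ and $\widehat g=|\widehat v|\langle\gamma\rangle^{\theta}$, wait --- more carefully, I want the hypothesis side of \eqref{eq:equiv est} to read $\|u\|_{X^{0,1/2+\epsilon(-\theta)}}\|v\|_{X^{0,1/2+\epsilon(-\theta)}}$ and the output side to carry a gain $\langle\gamma\rangle^{-\theta}$, which is exactly $\langle\gamma\rangle^s$ with $s=-\theta$; that gain of $\langle\gamma\rangle^{-\theta}$ on the output pairs with the loss of $\langle\gamma-\gamma_1\rangle^{\theta/2}\langle\gamma_1\rangle^{\theta/2}$ coming from the $B_\theta$ symbol, and on each dyadic block $\langle\gamma-\gamma_1\rangle^{\theta/2}\langle\gamma_1\rangle^{\theta/2}\langle\gamma\rangle^{-\theta}\lesssim \max(N_1,N_2)^{\theta}\cdot\min(\dots)\cdot\dots\lesssim 1$ once one checks that the smaller of the two frequencies is what appears (if $|\gamma|$ is comparable to $\max(N_1,N_2)$ this is immediate; the low-output case $|\gamma|\ll\max(N_1,N_2)$ forces $N_1\sim N_2$ and again the product of the three weights is $O(1)$). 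Therefore \eqref{eq:equiv est} yields, for each block, the bound $\|(\partial_x+\partial_y)B_\theta(P_{N_1}u,P_{N_2}v)\|_{X^{0,-1/2+\epsilon(-\theta)}}\lesssim \|P_{N_1}u\|_{X^{0,1/2+\epsilon(-\theta)}}\|P_{N_2}v\|_{X^{0,1/2+\epsilon(-\theta)}}$, and Cauchy--Schwarz in $(N_1,N_2)$ together with the square-summability built into the $X^{0,b}$ norms gives the claimed estimate.

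The main obstacle, and the one place where the argument requires care rather than bookkeeping, is the summation of the dyadic pieces: the operator $B_\theta$ is \emph{not} a Fourier multiplier applied to a product, so one cannot blindly sum the estimates for $P_{N_1}u$ and $P_{N_2}v$. What saves the day is that the output frequency of $B_\theta(P_{N_1}u,P_{N_2}v)$ is localized to $|\gamma|\lesssim \max(N_1,N_2)$, so for fixed output dyadic scale $N_0$ only $O(\log)$ --- or, with a more careful pairing, $O(1)$ --- pairs $(N_1,N_2)$ with $\max(N_1,N_2)\gtrsim N_0$ and $N_1\sim N_2$ (in the low-output regime) or $\max(N_1,N_2)\sim N_0$ (in the high-output regime) contribute, which restores almost-orthogonality. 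I would make this rigorous exactly as in the standard reduction used to pass between the dyadic and non-dyadic forms of $X^{s,b}$ multilinear estimates (cf.\ the reductions in \cite{K21} and \cite{T}), so no genuinely new input is needed beyond \eqref{eq:equiv est} and the elementary frequency-localization inequality $\min(|\gamma-\gamma_1|,|\gamma_1|)^\theta\le |\gamma-\gamma_1|^{\theta/2}|\gamma_1|^{\theta/2}$.
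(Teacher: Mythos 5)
Your overall idea --- trade the factor $[\min(|\gamma-\gamma_1|,|\gamma_1|)]^{\theta}$ for frequency weights and then invoke \eqref{eq:equiv est} with $s=-\theta$ --- is the right one, but your execution has two genuine problems. First, the splitting $\min(N_1,N_2)^{\theta}\le N_1^{\theta/2}N_2^{\theta/2}$ is too lossy and does not match the weights that \eqref{eq:equiv est} with $s=-\theta$ actually provides: that estimate gives you the gain $\langle\gamma\rangle^{-\theta}$ on the output only at the price of a full $\langle\gamma-\gamma_1\rangle^{\theta}$ and $\langle\gamma_1\rangle^{\theta}$ on \emph{each} input, not $\theta/2$ each. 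If you insist on paying only $\theta/2$ per input, the blockwise bound you need reduces to $\langle\gamma-\gamma_1\rangle^{\theta/2}\langle\gamma_1\rangle^{\theta/2}\langle\gamma\rangle^{-\theta}\lesssim 1$, and your claim that this holds in the low-output regime is false: when $N_1\sim N_2$ and $|\gamma|\ll N_1$ the product is of size $(N_1/\langle\gamma\rangle)^{\theta}$, which is unbounded. Equivalently, with correct accounting your $\theta/2$-split loses a factor $\bigl(\max(N_1,N_2)/\min(N_1,N_2)\bigr)^{\theta/2}$ in unbalanced interactions, precisely because it discards the information that the multiplier is the \emph{minimum} of the two frequencies. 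Second, even granting a uniform blockwise estimate, your summation step does not close: the bounds $\|(\partial_x+\partial_y)B_\theta(P_{N_1}u,P_{N_2}v)\|_{X^{0,-\frac12+\epsilon}}\lesssim\|P_{N_1}u\|_{X^{0,\frac12+\epsilon}}\|P_{N_2}v\|_{X^{0,\frac12+\epsilon}}$ carry no off-diagonal decay in $N_1/N_2$, so summing them by the triangle inequality and Cauchy--Schwarz produces $\ell^1$-type sums of the dyadic pieces, which are not controlled by the $\ell^2$-based $X^{0,b}$ norms; the ``almost-orthogonality'' you invoke concerns overlap of output frequencies and does not repair this. Appealing to the standard dyadic-to-continuous reductions of \cite{K21} or \cite{T} does not help either, since those rely on having a small power of $N_{\min}/N_{\max}$ or of $L$'s to spend, which your reduction has thrown away.

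The missing ingredient is the elementary pointwise inequality
\begin{equation*}
\min(|\gamma-\gamma_1|,|\gamma_1|)\le C\,\frac{\langle\gamma-\gamma_1\rangle\,\langle\gamma_1\rangle}{\langle\gamma\rangle},
\end{equation*}
which is how the paper proceeds: raising it to the power $\theta$ shows that $|\widehat{(\partial_x+\partial_y)B_\theta(u,v)}(\gamma,\tau)|$ is pointwise dominated by the integrand appearing in \eqref{eq:equiv est} with $s=-\theta$ (applied with $|\widehat u|$, $|\widehat v|$), i.e.\ with weight $\langle\cdot\rangle^{\theta}$ on each input and the gain $\langle\gamma\rangle^{-\theta}$ on the output. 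Since $-\theta>-\frac14$, Lemma \ref{lemma:estimates partial ZK} applies with $\epsilon(-\theta)=\frac{-\theta}{6}+\frac{1}{24}$, and the lemma follows in two lines with no Littlewood--Paley decomposition and hence no summation issue at all. If you replace your $\theta/2$-split by this inequality, your blockwise estimate becomes correct in every regime, but at that point the decomposition is superfluous.
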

\begin{proof}

From triangular inequality, it can be shown that
\begin{equation}\label{eq:min ineq}
    \min(|\gamma-\gamma_1|,|\gamma_1|)\leq C\frac{\langle \gamma-\gamma_1 \rangle\langle \gamma_1 \rangle}{\langle \gamma\rangle}
\end{equation}
So, using \eqref{eq:min ineq} and \eqref{eq:equiv est} with $s=-\theta$, for $\theta \in [0, \frac{1}{4})$ we have
\begin{equation*}
\begin{split}
    &\|(\partial_x+\partial_y)B_\theta(u,v)\|_{X^{s,-\frac{1}{2}+\epsilon(-\theta)}}\leq \\
    & \qquad\qquad\qquad \leq C^{\theta}\Bigg|\Bigg|\frac{(\xi+\eta)\langle \gamma\rangle^{-\theta}}{\langle\tau - \xi^3-\eta^3\rangle^{\frac{1}{2}-\epsilon(-\theta)}}\int_{\R^3}\frac{|\widehat{u}|(\gamma-\gamma_1,\tau-\tau_1)}{\langle\gamma-\gamma_1\rangle^{-\theta}}\frac{|\widehat{v}|(\gamma_1,\tau_1)}{\langle\gamma_1\rangle^{-\theta}}d\gamma_1d\tau_1\Bigg|\Bigg|_{L^2_{\tau,\xi,\eta}}\\
    &\qquad \qquad \qquad\leq C\|u\|_{X^{0,\frac{1}{2}+\epsilon(-\theta)}}\|v\|_{X^{0,\frac{1}{2}+\epsilon(-\theta)}}.
\end{split}
\end{equation*}
\end{proof}

Now we move to find estimate for $F$ defined in \eqref{eq:F}  in the Gevrey- Bourgain's space norm.
\begin{lemma}\label{lemma: estimates F}
Consider $F$ be as defined in \eqref{eq:F}, and let $\sigma >0$ and $\theta \in \left[0,\frac{1}{4}\right)$. Then, for \break$\epsilon(-\theta)=\min(\frac{1}{24}, \frac{-\theta}{6}+\frac{1}{24})=\frac{-\theta}{6}+\frac{1}{24}$, one has
\begin{equation}\label{eq: estimate F in L2}
\|F(U)\|_{X^{0,-\frac{1}{2}+\epsilon(-\theta)}} \leq C \sigma^{\theta} \|U\|_{X^{0,   \frac{1}{2}+\epsilon(-\theta)}}^{2}.
\end{equation}
\end{lemma}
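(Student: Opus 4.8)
The plan is to exploit the structure of $F(U)$ given in \eqref{eq:F}, namely
\begin{equation*}
F(U)=\mu a(\partial_x+\partial_y)\left[U^2-e^{\sigma|D_{x,y}|}\big((e^{-\sigma|D_{x,y}|}U)^2\big)\right],
\end{equation*}
and to show that its symbol is dominated by the kernel of the operator $B_\theta$ introduced in Lemma \ref{lemma: estimate B}, multiplied by $\sigma^\theta$. Writing $V=e^{-\sigma|D_{x,y}|}U$ so that $\widehat U(\gamma)=e^{\sigma|\gamma|}\widehat V(\gamma)$, the Fourier transform of the bracket at frequency $\gamma$ is
\begin{equation*}
\int_{\R^3}\left(e^{\sigma|\gamma-\gamma_1|}e^{\sigma|\gamma_1|}-e^{\sigma|\gamma|}\right)\widehat V(\gamma-\gamma_1,\tau-\tau_1)\widehat V(\gamma_1,\tau_1)\,d\gamma_1 d\tau_1.
\end{equation*}
By Lemma \ref{lemma: inequality for e} with $\theta\in[0,1]$ (and here $\theta\in[0,\tfrac14)$), the multiplier satisfies
\begin{equation*}
0\le e^{\sigma|\gamma-\gamma_1|}e^{\sigma|\gamma_1|}-e^{\sigma|\gamma|}\le \big[2\sigma\min(|\gamma-\gamma_1|,|\gamma_1|)\big]^{\theta}e^{\sigma|\gamma-\gamma_1|}e^{\sigma|\gamma_1|}.
\end{equation*}
Hence, pointwise in Fourier space, the modulus of $\widehat{F(U)}(\gamma,\tau)$ is bounded by $(2\sigma)^\theta|\mu a||\xi+\eta|$ times the transform of $B_\theta(|U|,|U|)$ evaluated with $\widehat{|U|}=|\widehat U|$ (using $\widehat{U}(\gamma)=e^{\sigma|\gamma|}\widehat V(\gamma)$ to put the exponential weights back onto $U$).

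The first step I would carry out is this reduction: replacing the integrand by its absolute value (which only increases the $L^2_{\tau,\xi,\eta}$ norm of the output, since the $X^{0,b}$ norm for $b=-\tfrac12+\epsilon(-\theta)<0$ is a weighted $L^2$ of the Fourier transform and we are taking absolute values inside the convolution) and invoking the pointwise multiplier bound above. This yields
\begin{equation*}
\|F(U)\|_{X^{0,-\frac12+\epsilon(-\theta)}}\le C\sigma^\theta\,\big\|(\partial_x+\partial_y)B_\theta(\widetilde U,\widetilde U)\big\|_{X^{0,-\frac12+\epsilon(-\theta)}},
\end{equation*}
where $\widetilde U$ is the function with $\widehat{\widetilde U}=|\widehat U|$, so $\|\widetilde U\|_{X^{0,b}}=\|U\|_{X^{0,b}}$ for every $b$. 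The second step is then simply to apply Lemma \ref{lemma: estimate B} with $u=v=\widetilde U$ and the same $\theta$, noting that the choice $\epsilon(-\theta)=\min(\tfrac1{24},\tfrac{-\theta}{6}+\tfrac1{24})=\tfrac{-\theta}{6}+\tfrac1{24}$ (valid since $\theta\ge0$) is exactly the index for which that lemma holds. This gives
\begin{equation*}
\big\|(\partial_x+\partial_y)B_\theta(\widetilde U,\widetilde U)\big\|_{X^{0,-\frac12+\epsilon(-\theta)}}\le C\|\widetilde U\|_{X^{0,\frac12+\epsilon(-\theta)}}^2=C\|U\|_{X^{0,\frac12+\epsilon(-\theta)}}^2,
\end{equation*}
and combining the two displays gives \eqref{eq: estimate F in L2}.

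The main obstacle, and the point that needs care rather than cleverness, is justifying that passing to absolute values inside the convolution and attaching the $[\,2\sigma\min(\cdots)\,]^\theta$ factor is legitimate at the level of the $X^{0,-\frac12+\epsilon(-\theta)}$ norm: one must check that the weight $\langle\tau-\xi^3-\eta^3\rangle^{-\frac12+\epsilon(-\theta)}$ placed on the output of $F(U)$ is untouched by this manipulation (it is, since it depends only on the output frequency $(\gamma,\tau)$, which is the same on both sides), and that the convolution structure in $(\gamma_1,\tau_1)$ is preserved when we bound $|e^{\sigma|\gamma-\gamma_1|}e^{\sigma|\gamma_1|}-e^{\sigma|\gamma|}|$ by a product supported on the same variables. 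A secondary, minor point is extracting the clean power $\sigma^\theta$: the bound from Lemma \ref{lemma: inequality for e} produces $(2\sigma)^\theta\le 2\sigma^\theta$ for $\theta\in[0,1]$, which is absorbed into $C$. Everything else is a direct citation of Lemmas \ref{lemma: inequality for e} and \ref{lemma: estimate B}, so the proof should be short.
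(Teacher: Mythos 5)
Your proposal is correct and follows essentially the same route as the paper's proof: bound the symbol $e^{\sigma|\gamma-\gamma_1|}e^{\sigma|\gamma_1|}-e^{\sigma|\gamma|}$ via Lemma \ref{lemma: inequality for e}, majorize $|\widehat{F(U)}|$ pointwise by $C\sigma^{\theta}$ times the symbol of $(\partial_x+\partial_y)B_\theta(W,W)$ with $\widehat{W}=|\widehat{U}|$, and conclude by Lemma \ref{lemma: estimate B} with $\epsilon(-\theta)=\frac{-\theta}{6}+\frac{1}{24}$. The justification you flag (the weight $\langle\tau-\xi^3-\eta^3\rangle^{b}$ depends only on the output variables, so passing to absolute values inside the convolution is harmless) is exactly the implicit step in the paper's argument, so there is no gap.
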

\begin{proof}
First, observe that
\begin{equation}\label{eq: transf F}
\begin{split}
|\widehat{F(U)}(\gamma, \tau)| \leq C|\xi + \eta|\int_{\R^3}\big(1-e^{\sigma(|\gamma|-|\gamma-\gamma_1|-|\gamma_1|)}\big)|\widehat{U}(\gamma-\gamma_1,\tau-\tau_1)| \, |\widehat{U}(\gamma_1,\tau_1)|d\gamma_1 d\tau_1.
\end{split}
\end{equation}

Now, from Lemma \ref{lemma: inequality for e}, we get
\begin{equation}\label{eq: 1-e ZK}
    1-e^{\sigma(|\gamma|-|\gamma-\gamma_1|-|\gamma_1|)} \leq 2^{\theta}\sigma^{\theta}[min(|\gamma-\gamma_1|,|\gamma_1|)]^{\theta}.
\end{equation}
Consequently, using \eqref{eq: 1-e ZK} in \eqref{eq: transf F}, we have
\begin{equation}\label{eq: tranf F nova}
\begin{split}
|\widehat{F(U)}(\gamma, \tau)| &\leq C\sigma^{\theta}|\xi + \eta|\int_{\R^3}[min(|\gamma-\gamma_1|,|\gamma_1|)]^{\theta}|\widehat{U}(\gamma-\gamma_1,\tau-\tau_1)|  \,|\widehat{U}(\gamma_1,\tau_1)|d\gamma_1 d\tau_1\\
&=C\sigma^{\theta} |(\xi+\eta)\widehat{B_{\theta}(W,W)}|,
\end{split}
\end{equation}
where $\widehat{W}=|\widehat{U}|$.

Using Lemma \ref{lemma: estimate B}, it follows from \eqref{eq: tranf F nova} that
\begin{equation*}
\begin{split}
    \|F(U)\|_{X^{0,-\frac{1}{2}+\epsilon(-\theta)}}&\leq C\sigma^{\theta}\|(\partial_x+\partial_y)B_{\theta}(W,W)\|_{X^{0,-\frac{1}{2}+\epsilon(-\theta)}}\\
    &\leq  C\sigma^{\theta}\|W\|_{X^{0,\frac{1}{2}+\epsilon(-\theta)}}^2\\
    &\leq  C\sigma^{\theta}\|U\|_{X^{0,\frac{1}{2}+\epsilon(-\theta)}}^2,
    \end{split}
\end{equation*}
as required.
\end{proof}

In what follows,  we use the estimate obtained in Lemma \ref{lemma: estimates F} to prove that the quantity $M_{\sigma}(t)$ defined in \eqref{eq:almost conserved ZK} is almost conserved. We start with the following result.

\begin{proposition}\label{prop:conserved quantity ZK}
    Let $\sigma>0$ and $\theta \in [0,   \frac{1}{4})$. Then, for $\epsilon(-\theta)=\min(\frac{1}{24}, \frac{-\theta}{6}+\frac{1}{24})=\frac{-\theta}{6}+\frac{1}{24}$, there exists $C>0$ such that for any solution $u \in X_T^{\sigma,  0,   \frac{1}{2}+\epsilon(-\theta)}$ to the IVP \eqref{eq:new gZK} with $k=1$ in the interval $[0, T]$, we have
\begin{equation}\label{eq:conserved quantity ZK}
    \sup_{t\in [0, T]} M_\sigma(t) \leq M_\sigma (0) + C\sigma^{\theta}\|u\|_{X_T^{\sigma,  0,   \frac{1}{2}+\epsilon(-\theta)}}^{3}.
\end{equation}
\end{proposition}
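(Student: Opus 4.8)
The plan is to combine the identity \eqref{eq:estimate M}, the estimate for $F$ from Lemma \ref{lemma: estimates F}, and the restriction Lemma \ref{lemma:restriction}. Recall from \eqref{eq:estimate M} that, with $U=e^{\sigma|D_{x,y}|}u$,
\[
M_\sigma(t')=M_\sigma(0)+R_\sigma(t'),\qquad R_\sigma(t')=2\int\!\!\int\chi_{[0,t']}\,U\,F(U)\,dt\,dx\,dy,
\]
so it suffices to show $|R_\sigma(t')|\le C\sigma^{\theta}\|u\|_{X_T^{\sigma,0,\frac12+\epsilon(-\theta)}}^{3}$ uniformly for $t'\in[0,T]$ and then take the supremum over $t'$.

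Set $b_1=\frac12-\epsilon(-\theta)$. Since $\epsilon(-\theta)=-\frac{\theta}{6}+\frac1{24}\in(0,\tfrac1{24}]$ for $\theta\in[0,\tfrac14)$, one has $b_1\in(-\tfrac12,\tfrac12)$, so Lemma \ref{lemma:restriction} is available with this exponent. First I would view $R_\sigma(t')$ as an $L^2_{t,x,y}$ pairing and, using that the symbol $\xi^3+\eta^3$ is odd (so that $X^{0,b_1}$ and $X^{0,-b_1}$ are dual under that pairing), bound
\[
|R_\sigma(t')|\le 2\,\|\chi_{[0,t']}U\|_{X^{0,b_1}}\,\|F(U)\|_{X^{0,-b_1}}.
\]
For the first factor, since $e^{\sigma|D_{x,y}|}$ commutes with multiplication by the time cut-off, \eqref{eq:exponential} gives $\|\chi_{[0,t']}U\|_{X^{0,b_1}}=\|\chi_{[0,t']}u\|_{X^{\sigma,0,b_1}}$; applying Lemma \ref{lemma:restriction} with $I=[0,t']\subset[-T,T]$ and using monotonicity of the restriction norm in the last index (here $b_1\le\frac12+\epsilon(-\theta)$ since $\langle\tau-\xi^3-\eta^3\rangle\ge1$) yields $\|\chi_{[0,t']}U\|_{X^{0,b_1}}\le C\|u\|_{X_T^{\sigma,0,\frac12+\epsilon(-\theta)}}$.

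The second factor is where a little care is needed, because Lemma \ref{lemma: estimates F} is stated on all of $\R^3$ while $u$ is only defined on the time slab. I would pick an extension $\tilde u$ of $u$ off $\R^2\times(-T,T)$ with $\|\tilde u\|_{X^{\sigma,0,\frac12+\epsilon(-\theta)}}\le 2\|u\|_{X_T^{\sigma,0,\frac12+\epsilon(-\theta)}}$, set $\tilde U=e^{\sigma|D_{x,y}|}\tilde u$, and note that $F$ acts only in the spatial variables, so $F(\tilde U)(\cdot,\cdot,t)=F(U)(\cdot,\cdot,t)$ for every $t\in[0,t']$ and hence $\chi_{[0,t']}F(U)=\chi_{[0,t']}F(\tilde U)$ inside the pairing. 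Then Lemma \ref{lemma: estimates F} applied to $\tilde U$ gives $\|F(\tilde U)\|_{X^{0,-\frac12+\epsilon(-\theta)}}\le C\sigma^{\theta}\|\tilde U\|_{X^{0,\frac12+\epsilon(-\theta)}}^{2}\le C\sigma^{\theta}\|u\|_{X_T^{\sigma,0,\frac12+\epsilon(-\theta)}}^{2}$. Multiplying the two bounds yields $|R_\sigma(t')|\le C\sigma^{\theta}\|u\|_{X_T^{\sigma,0,\frac12+\epsilon(-\theta)}}^{3}$, and taking $\sup_{t'\in[0,T]}$ proves \eqref{eq:conserved quantity ZK}.

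I do not expect a serious obstacle here: all the genuinely hard analysis — the refined bilinear estimate \eqref{eq:estimate partial derivative ZK}, Lemma \ref{lemma: estimate B}, and the resulting gain of $\sigma^{\theta}$ in Lemma \ref{lemma: estimates F} — is already in place. What remains is the duality pairing (valid because the dispersion relation is odd), the elementary check $b_1\in(-\tfrac12,\tfrac12)$ so that Lemma \ref{lemma:restriction} applies, and the routine extension argument that lets the time-localized nonlinearity be controlled by the global estimate for $F$.
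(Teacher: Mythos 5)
Your proposal is correct and follows essentially the same route as the paper's proof: the identity \eqref{eq:estimate M}, a Cauchy--Schwarz/duality pairing between $X^{0,\frac12-\epsilon(-\theta)}$ and $X^{0,-\frac12+\epsilon(-\theta)}$, Lemma \ref{lemma:restriction}, and Lemma \ref{lemma: estimates F} applied on the time slab. The only difference is cosmetic: you spell out the extension $\tilde u$ that justifies using Lemma \ref{lemma: estimates F} for the time-restricted solution, a step the paper compresses into the phrase ``the estimate \eqref{eq: estimate F in L2} restricted to the time slab.''
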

\begin{proof}
 Taking in consideration the identities  \eqref{eq:estimate M} and \eqref{eq:R ZK}, to prove \eqref{eq:conserved quantity ZK} we have to estimate $|R_\sigma(t')|$ for all $0<t'\leq T$. For this purpose, we use the Cauchy-Schwarz inequality followed by Lemma \ref{lemma:restriction} and the estimate \eqref{eq: estimate F in L2} restricted to the time slab and obtain that for \break $\epsilon(-\theta)=\min(\frac{1}{24}, \frac{-\theta}{6}+\frac{1}{24})=\frac{-\theta}{6}+\frac{1}{24}$, there exists $C>0$ such that
\begin{equation}\label{eq:estimate first term F}
    \begin{split}
    \left|\int \int \chi_{[0, t']}UF(U)dxdydt\right| &\leq \|\chi_{[0, t']}U\|_{X^{  0,   \frac{1}{2}-\epsilon(-\theta)}}\|\chi_{[0, t']}F(U)\|_{X^{  0,  -\frac{1}{2}+\epsilon(-\theta)}}\\
    &\leq \|U\|_{X_{T}^{  0,   \frac{1}{2}+\epsilon(-\theta)}}\|F(U)\|_{X_{T}^{  0,   -\frac{1}{2}+\epsilon(-\theta)}}\\
    &\leq C\sigma^{\theta}\|u\|^3_{X_{T}^{ \sigma,  0,   \frac{1}{2}+\epsilon(-\theta)}}
    \end{split}
\end{equation}
for all $0<t'\leq T$. Using the estimate \eqref{eq:estimate first term F} and taking the supremum over $0<t'\leq T$ in \eqref{eq:estimate M}, it follows that
\begin{equation*}
    \begin{split}
    \sup_{t'\in [0, T]} M_\sigma (t') &\leq M_\sigma (0) + \sup_{t'\in [0, T]}|R_\sigma(t')|\\
    & \leq M_\sigma (0) + C\sigma^{\theta}\|u\|^3_{X_{T}^{ \sigma,  0,   \frac{1}{2}+\epsilon(-\theta)}},
    \end{split}
\end{equation*}
as required.
\end{proof}

As a corollary, now we prove that the quantity  $M_\sigma (t)$ is an almost conserved quantity.

\begin{corollary}\label{cor: almost conserved quantity ZK}
Let $\sigma>0$ and $\theta \in [0,\frac{1}{4})$. Then, there exists $C>0$ such that the solution $u \in X_T^{ \sigma,   0,   \frac{1}{2}+\frac{1}{24}}$ to the IVP \eqref{eq:new gZK} with $k=1$ given by Theorem \ref{teo:local ZK} satisfies
\begin{equation}\label{eq:estimate mass}
    \sup_{t\in [0, T]} M_\sigma (t) \leq M_\sigma (0) + C\sigma^{\theta} M_\sigma (0)^{\frac{3}{2}}.
\end{equation}
\end{corollary}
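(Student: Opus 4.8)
The plan is to deduce Corollary~\ref{cor: almost conserved quantity ZK} from Proposition~\ref{prop:conserved quantity ZK} together with the local bound \eqref{eq:bound for solution ZK} coming from Theorem~\ref{teo:local ZK}. First I would recall that, by Theorem~\ref{teo:local ZK} applied at the $L^2$-level (i.e.\ with $s=0$, so $\epsilon(0)=\frac{1}{24}$), the solution $u$ on a short time interval satisfies
\begin{equation*}
\|u\|_{X_T^{\sigma,0,\frac{1}{2}+\frac{1}{24}}}\leq C\|u_0\|_{G^{\sigma,0}}=C\,M_\sigma(0)^{1/2},
\end{equation*}
where the last equality is just the definition \eqref{eq:almost conserved ZK} of $M_\sigma$ at $t=0$. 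Since $\theta\in[0,\frac14)$ forces $\epsilon(-\theta)=\frac{-\theta}{6}+\frac{1}{24}\le\frac{1}{24}$, one can run the local theory at regularity level $\frac12+\epsilon(-\theta)$ as well; in any case the monotonicity of the $X^{\sigma,0,b}$ norms in $b$ lets us pass from the $\frac12+\frac1{24}$ bound to the $\frac12+\epsilon(-\theta)$ bound, so that the hypothesis $u\in X_T^{\sigma,0,\frac12+\epsilon(-\theta)}$ of Proposition~\ref{prop:conserved quantity ZK} is met with $\|u\|_{X_T^{\sigma,0,\frac12+\epsilon(-\theta)}}\le C M_\sigma(0)^{1/2}$.

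Next I would simply insert this into \eqref{eq:conserved quantity ZK}: Proposition~\ref{prop:conserved quantity ZK} gives
\begin{equation*}
\sup_{t\in[0,T]}M_\sigma(t)\leq M_\sigma(0)+C\sigma^\theta\|u\|_{X_T^{\sigma,0,\frac12+\epsilon(-\theta)}}^3\leq M_\sigma(0)+C\sigma^\theta\big(CM_\sigma(0)^{1/2}\big)^3= M_\sigma(0)+C\sigma^\theta M_\sigma(0)^{3/2},
\end{equation*}
after absorbing the numerical constant $C^3$ into $C$. This is exactly \eqref{eq:estimate mass}. I would also remark that the same argument applies verbatim on $[-T,0]$ by time-reversal symmetry of the equation, so one gets the bound on the full interval $[-T,T]$ if desired.

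The only genuine subtlety — and the step I would be most careful about — is the bookkeeping on the regularity index $b$: Theorem~\ref{teo:local ZK} produces a solution in $X_{T_0}^{\sigma,s,\frac12+\epsilon(s)}$ for the \emph{given} $s>-\frac14$, and to feed Proposition~\ref{prop:conserved quantity ZK} one needs the solution at the $L^2$-level with the specific exponent $\frac12+\epsilon(-\theta)$. Here the refinement built into Lemma~\ref{lemma:estimates partial ZK} (the assertion $\epsilon(s)\le\epsilon(0)$, highlighted in Remark~\ref{rem-bilinear}) is what makes this consistent: the local solution constructed with the $s$-level bilinear estimate also lies in the $L^2$-based Bourgain space with exponent $\frac12+\frac1{24}\ge\frac12+\epsilon(-\theta)$, because the relevant contraction can be closed simultaneously at both regularity levels using the same $b$. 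Concretely I would note that $u\in X_{T}^{\sigma,s,\frac12+\epsilon(s)}$ with $u_0\in G^{\sigma,0}$ (which holds since $s\ge0$ would be the harmless case, and for $-\frac14<s<0$ one restricts attention to data that is also in $L^2$, which is the setting of the global theorem) yields, by re-running the fixed point argument at level $0$, membership in $X_T^{\sigma,0,\frac12+\frac1{24}}$ with norm controlled by $M_\sigma(0)^{1/2}$; everything else is the direct substitution above. Once this index compatibility is granted, the corollary is immediate.
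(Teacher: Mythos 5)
Your argument is correct and is essentially the paper's own proof: apply Theorem \ref{teo:local ZK} with $s=0$ to get $\|u\|_{X_T^{\sigma,0,\frac12+\frac1{24}}}\leq C\|u_0\|_{G^{\sigma,0}}=C\,M_\sigma(0)^{1/2}$, use $\epsilon(-\theta)\leq\epsilon(0)=\frac1{24}$ (the refinement from Lemma \ref{lemma:estimates partial ZK}) together with the monotonicity of the $X^{\sigma,0,b}$ norm in $b$ to bound $\|u\|_{X_T^{\sigma,0,\frac12+\epsilon(-\theta)}}$, and substitute into \eqref{eq:conserved quantity ZK}. The worry in your final paragraph about re-running the fixed point at level $0$ for $-\frac14<s<0$ is unnecessary here, since the corollary as stated already concerns the solution produced by Theorem \ref{teo:local ZK} at $s=0$ (the reduction of general $s$ to $s=0$ is handled later, in the proof of Theorem \ref{teo:global ZK}, via the embedding \eqref{eq:Gegrey embedding}).
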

\begin{proof}
From Theorem \ref{teo:local ZK} with $s=0$, the local solution $u$ to the IVP \eqref{eq:new gZK} with $k=1$ in the interval $[0, T]$ belongs to $X_T^{\sigma,  0,   \frac{1}{2}+\epsilon(0)}$ where $\epsilon(0)=\frac{1}{24}$ and, from \eqref{eq:bound for solution ZK teo}, it satisfies 
\begin{equation*}
    \|u\|_{X_T^{\sigma,  0,   \frac{1}{2}+\frac{1}{24}}}\leq C \|u_0\|_{G^{\sigma,  0}}.
\end{equation*}
Moreover, for $\theta \in [0,\frac{1}{4})$, one has $\epsilon(-\theta)=\min(\frac{1}{24}, \frac{-\theta}{6}+\frac{1}{24})\leq \frac{1}{24}$. Consequently, it follows that $u \in X_T^{\sigma,  0,   \frac{1}{2}+\frac{1}{24}} \subset X_T^{\sigma,  0,   \frac{1}{2}+\epsilon(-\theta)}$ and
\begin{equation}\label{eq:estimate u mass}
    \|u\|_{X_T^{\sigma,  0,   \frac{1}{2}+\epsilon(-\theta)}}\leq \|u\|_{X_T^{\sigma,  0,   \frac{1}{2}+\frac{1}{24}}}\leq C \|u_0\|_{G^{\sigma,  0}}. 
\end{equation}
Finally, from \eqref{eq:conserved quantity ZK} and the estimate \eqref{eq:estimate u mass}, we conclude that
\begin{equation*}
\begin{split}
    \sup_{t\in [0, T]} M_\sigma (t) &\leq M_\sigma (0) + C\sigma^{\theta}\|u\|_{X_T^{\sigma,  0,   \frac{1}{2}+\epsilon(-\theta)}}^{3}\\
    &\leq M_\sigma (0) + C\sigma^{\theta} M_\sigma (0)^{\frac{3}{2}}.
    \end{split}
\end{equation*}
\end{proof}

\begin{remark}\label{remark:faixa}
    The refined version of the bilinear estimate proved in Lemma \ref{lemma:estimates partial ZK} played a crucial role in the proof of Corollary \ref{cor: almost conserved quantity ZK}.  The increasing nature of $\epsilon = \epsilon(s)$ as a function of $s$ played a vital role to obtain the estimate \eqref{eq:estimate u mass}. If there was no any information that for any $\theta \in [0,\frac{1}{4})$, $\epsilon(-\theta)\leq \epsilon(0)$, then it would not have been possible to guarantee that
    \begin{equation*}
        \|u\|_{X_T^{\sigma,  0,   \frac{1}{2}+\epsilon(-\theta)}}\leq \|u\|_{X_T^{\sigma,  0,   \frac{1}{2}+\epsilon(0)}}.
    \end{equation*}
\end{remark}

\subsection{Almost conserved quantity at $H^1$-level of Sobolev regularity} 

In this subsection, we will find a growth estimate for $E_\sigma (t)$ defined in \eqref{eq:almost conserved mZK} and consequently prove that it is an almost conserved quantity at the $H^1$-level of Sobolev regularity. This will allow us to extend the local solution to the mZK equation globally in time and obtain a  lower bound for the evolution of the radius of analyticity $\sigma(t)$ as $t\to \infty$. 

Recall that $E_\sigma(t)$ is given by
\begin{equation}\label{en-m}
E_\sigma (t)= \|u(t)\|^{2}_{G^{\sigma,1}}-\int \partial_x (e^{\sigma |D_{x,y}|}u)\partial_y (e^{\sigma |D_{x,y}|}u)dxdy-  \frac{\mu a}{2}\|e^{\sigma |D_{x,y}|}u\|_{L_{x,y}^4}^4.
\end{equation}

Differentiating \eqref{en-m} with respect to $t$, we obtain
\begin{equation}\label{eq:partial E}
\begin{split}
        \frac{d}{dt}(E_\sigma(t))=& 2\int U \partial_tUdxdy+2\int \partial_x U \partial_x\partial_t Udxdy+2\int \partial_y U \partial_y\partial_t U dxdy-\\
        & -\int \partial_x \partial_t U\partial_y Udxdy-\int \partial_x U\partial_y\partial_t Udxdy- 2\mu a\int U^{3}\partial_t U dxdy.
\end{split}
    \end{equation}
    Applying the operator $e^{\sigma|D_{x,y}|}$ to the gZK equation \eqref{eq:new gZK} with $k=2$, we get
    \begin{equation}\label{eq:op in mZK}
        \partial_t U +(\partial_x^3+\partial_y^3)U+\mu a(\partial_x+\partial_y) U^3=G(U),
        \end{equation}
        where $G(U)$ is given by
\begin{equation}\label{eq:G}
G(U)=\mu a (\partial_x+\partial_y)\left[U^{3}-e^{\sigma |D_{x,y}|}\big((e^{-\sigma |D_{x,y}|}U)^{3}\big)\right].
\end{equation}       

Now, using \eqref{eq:op in mZK} in each term of \eqref{eq:partial E}, one has
\begin{equation*}
    \begin{split}
            \int U\partial_tUdxdy=&-\int U\partial_x^3Udxdy-\int U\partial_y^3Udxdy-\mu a \int U \partial_xU^{3}dxdy-\\
            &-\mu a\int U \partial_yU^{3}dxdy+\int UG(U)dxdy,\\
           \int \partial_xU \partial_x\partial_tUdxdy=&-\int\partial_xU\partial_x^4Udxdy-\int \partial_x U\partial_x\partial_y^3Udxdy- \mu a\int \partial_xU\partial_x^2U^3dxdy-\\
           &-\mu a\int \partial_xU\partial_x\partial_yU^3dxdy+\int \partial_xU\partial_xG(U)dxdy,\\
           \int \partial_yU \partial_y\partial_tUdxdy=&-\int\partial_yU\partial_y\partial_x^3Udxdy-\int \partial_y U\partial_y^4Udxdy- \mu a\int \partial_yU\partial_y\partial_xU^3dxdy-\\
           &-\mu a\int \partial_yU\partial_y^2U^3dxdy+\int \partial_yU\partial_yG(U)dxdy,\\
           \int \partial_x\partial_tU \partial_y Udxdy=&-\int \partial_x^4U\partial_yUdxdy-\int \partial_x\partial_y^3 U \partial_y Udxdy-\mu a \int \partial_x^2U^3\partial_y Udxdy-\\
           &-\mu a \int \partial_x\partial_yU^3\partial_y Udxdy +\int \partial_xG(U)\partial_yUdxdy,\\           
           \int \partial_xU \partial_y \partial_tUdxdy=&-\int \partial_xU\partial_y\partial_x^3Udxdy-\int \partial_x U \partial_y^4 Udxdy-\mu a \int \partial_xU\partial_y\partial_x U^3dxdy-\\
           &-\mu a \int \partial_xU\partial_y^2 U^3dxdy +\int \partial_xU\partial_yG(U)dxdy,\\                      
            \int U^{3}\partial_t Udxdy=&-\int U^{3}\partial_x^3Udxdy-\int U^{3}\partial_y^3Udxdy-\mu a\int U^{3}\partial_xU^3dxdy-\\
            &-\mu a\int U^{3}\partial_yU^3dxdy+\int U^{3}G(U)dxdy.
    \end{split}
\end{equation*}

As in the previous subsection, we can assume that $U$ and all its partial derivatives tend to zero as $|(x,y)| \rightarrow \infty$. With this consideration, it follows from integration by parts that

\begin{align}
        \int U\partial_tUdxdy=&\int UG(U)dxdy, \label{eq:1a}\\
        \int \partial_xU \partial_x\partial_tUdxdy =&-\mu a\int U^{3}\partial_x^3U dxdy-\mu a\int \partial_yU\partial_x^2U^3 dxdy+\int \partial_xU\partial_xG(U)dxdy,\label{eq:1b}\\
        \int \partial_yU \partial_y\partial_tUdxdy =&-\mu a\int \partial_xU\partial_y^2U^3 dxdy-\mu a\int U^{3}\partial_y^3U dxdy+\int \partial_yU\partial_yG(U)dxdy,\label{eq:2b}\\
        \int \partial_x\partial_tU \partial_y Udxdy=&-\mu a \int \partial_x^2U^3\partial_y Udxdy-\mu a \int \partial_y^2U^3\partial_x Udxdy +\int \partial_xG(U)\partial_yUdxdy,       \label{eq:1c}\\
         \int \partial_xU \partial_y \partial_tUdxdy=&-\mu a \int \partial_yU\partial_x^2 U^3dxdy-\mu a \int \partial_xU\partial_y^2 U^3dxdy +\int \partial_xU\partial_yG(U)dxdy,                     \label{eq:2c} \\ 
        \int U^{3}\partial_t Udxdy=&-\int U^{3}\partial_x^3Udxdy-\int U^{3}\partial_y^3Udxdy+\int U^{3}G(U)dxdy \label{eq:1d}.
\end{align}

Now, inserting the identities \eqref{eq:1a}, \eqref{eq:1b}, \eqref{eq:2b}, \eqref{eq:1c}, \eqref{eq:2c} and \eqref{eq:1d} in \eqref{eq:partial E}, we get
\begin{equation}\label{eq:partial E new}
\begin{split}
    \frac{d}{dt}E_\sigma(t)=&2\int UG(U)dxdy + 2\int \partial_xU\partial_xG(U)dxdy+2\int \partial_yU\partial_yG(U)dxdy -\\
     &- \int \partial_yU\partial_xG(U)dxdy- \int \partial_xU\partial_yG(U)dxdy-2 \mu a\int U^{3}G(U)dxdy.
\end{split}
\end{equation}

Integrating \eqref{eq:partial E new} in time over $[0, t']$ for $0<t'\leq T$, we obtain
\begin{equation}\label{eq:estimate E}
    E_\sigma(t') =E_\sigma(0)+R_\sigma(t'),
\end{equation}
where
\begin{equation}\label{eq:R mZK}
\begin{split}
    R_\sigma(t')=& \,\, 2\int \int \chi_{[0, t']}UG(U)dxdydt + 2\int\int \chi_{[0, t']}\partial_xU\partial_xG(U)dxdydt +\\
    &+ 2\int\int \chi_{[0, t']}\partial_yU\partial_yG(U)dxdydt-\int\int \chi_{[0, t']}\partial_yU\partial_xG(U)dxdydt-\\
    &-\int\int \chi_{[0, t']}\partial_xU\partial_yG(U)dxdydt - 2\mu a\int\int \chi_{[0, t']}U^{3}G(U)dxdydt.
    \end{split} 
\end{equation}

In sequel, we find estimates for $G(U)$ in the Bourgain's space norm.

\begin{lemma}\label{lemma: estimates G}
Let $G$ be as defined in \eqref{eq:G} and $\sigma >0$. Then, for any $b=\frac{1}{2}+\epsilon$, $0<\epsilon \ll 1$, and for all $\alpha \in \left[0,\frac{3}{4}\right]$,
\begin{equation}\label{eq: estimate G in L2}
\|G(U)\|_{L_{t,x,y}^{2}} \leq C \sigma^{\alpha} \|U\|_{X^{1,   b}}^{3},
\end{equation}
\begin{equation}\label{eq: estimate G in X}
\|\partial_xG(U)\|_{X^{0,   b-1}} \leq C \sigma^{\alpha} \|U\|_{X^{1,   b}}^{3}
\end{equation}
for some constant $C>0$ independent on $\sigma$.
\end{lemma}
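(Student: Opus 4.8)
The plan is to mimic the structure of the proof of Lemma \ref{lemma: estimates F}, now working at the $H^1$-level and with a cubic nonlinearity. First I would pass to the Fourier side. Writing $\widehat{U}$ for the space-time Fourier transform, one has, after expanding $e^{\sigma|D_{x,y}|}\bigl((e^{-\sigma|D_{x,y}|}U)^3\bigr)$ as a triple convolution,
\begin{equation*}
|\widehat{G(U)}(\gamma,\tau)|\leq C|\xi+\eta|\int \bigl(1-e^{\sigma(|\gamma|-|\gamma_1|-|\gamma_2|-|\gamma_3|)}\bigr)\prod_{j=1}^{3}|\widehat{U}(\gamma_j,\tau_j)|\,d\gamma_1d\tau_1d\gamma_2d\tau_2,
\end{equation*}
where $\gamma_1+\gamma_2+\gamma_3=\gamma$, $\tau_1+\tau_2+\tau_3=\tau$. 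Since $|\gamma|\le|\gamma_1|+|\gamma_2|+|\gamma_3|$ the exponent is nonpositive. Applying Lemma \ref{lemma: inequality for e} iteratively (first splitting off $\gamma_1$, then treating $\gamma_2+\gamma_3$), or a direct three-term version of the same elementary estimate, I would bound the weight $1-e^{\sigma(\cdots)}$ by $C\sigma^{\alpha}\bigl[\text{med}(|\gamma_1|,|\gamma_2|,|\gamma_3|)\bigr]^{\alpha}$ for $\alpha\in[0,1]$ — in fact by $C\sigma^\alpha$ times a product of two of the three frequencies raised to total power $\alpha$, distributed so that each factor carries at most power $1$; this flexibility is what lets $\alpha$ range up to $\tfrac34$ below.

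Next I would absorb the gained powers of frequency and the derivative $|\xi+\eta|\lesssim\langle\gamma\rangle$ into the $X^{1,b}$ weights. The point is that the target norms are at regularity $0$: for \eqref{eq: estimate G in L2} we need $L^2_{t,x,y}=X^{0,0}$, and for \eqref{eq: estimate G in X} we need $X^{0,b-1}$. Since each input is measured in $X^{1,b}$, each $|\widehat U(\gamma_j,\tau_j)|$ effectively comes with a free factor $\langle\gamma_j\rangle$ to spend. The derivative $\langle\gamma\rangle$ hitting the product (one unit) together with the gained $\alpha\le \tfrac34$ units from the exponential weight must be dominated by the $3$ units of smoothing available from the three inputs; arranging this distribution (e.g. routing the $\langle\gamma\rangle$ onto the highest-frequency factor, which is $\lesssim$ that input's own $\langle\gamma_j\rangle$ up to the comparable ones) reduces both estimates to a Sobolev-type trilinear bound
\begin{equation*}
\|v_1v_2v_3\|_{X^{0,\,0\text{ or }b-1}}\leq C\prod_{j=1}^3\|v_j\|_{X^{0,b}},
\end{equation*}
which is exactly the kind of estimate furnished by Lemma \ref{lemma: L2 norm of product mZK} (for the $L^2$ case) and, for \eqref{eq: estimate G in X}, by duality together with Lemma \ref{lemma: L2 norm of product mZK} again — pairing against a test function in $X^{0,1-b}\subset X^{0,b}$ since $b=\tfrac12+\epsilon$ and $1-b=\tfrac12-\epsilon<b$. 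The mild extra derivative loss $(1+)$ appearing in \eqref{eq:strichartz 2 mZK} is harmless because we are at regularity $1$ on the inputs and only need regularity $0$ on the output, leaving room to spare, which is precisely why $\alpha$ can be taken as large as $\tfrac34$ rather than only near $0$.

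The main obstacle I expect is bookkeeping the distribution of the $\langle\gamma_j\rangle$-weights in the worst frequency configuration — when two or three of the $\gamma_j$ are comparable and large while $\gamma$ itself may be small (high-high-high to low interaction). There one cannot route $\langle\gamma\rangle$ onto a single input trivially, but since $\langle\gamma\rangle\lesssim\langle\gamma_j\rangle$ for the largest $j$ and the exponential weight contributes the \emph{median} frequency (not the max), one still has at least $2-\alpha\geq \tfrac54$ units of smoothing left on the two largest inputs after paying the derivative, comfortably enough to close via Lemma \ref{lemma: L2 norm of product mZK} with the $X^{\frac12+,b}$ slot assigned to whichever input is smallest. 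Carrying this out carefully, checking that $\alpha=\tfrac34$ is the borderline value compatible with the $\tfrac12+$ loss in Lemma \ref{lemma: L2 norm of product mZK}, completes the proof; the constant $C$ is then manifestly independent of $\sigma$, all $\sigma$-dependence having been extracted as the prefactor $\sigma^\alpha$.
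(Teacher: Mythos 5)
Your treatment of the first bound \eqref{eq: estimate G in L2} is essentially the paper's argument: bound $1-e^{-\sigma(|\gamma_1|+|\gamma_2|+|\gamma_3|-|\gamma|)}\le C\sigma^{\alpha}|\gamma_{\mathrm{med}}|^{\alpha}$ (the paper gets $\sum_j|\gamma_j|-|\gamma|\le 6|\gamma_2|$ directly from the elementary inequality \eqref{eq:exp function}, no iteration of Lemma \ref{lemma: inequality for e} needed), route the single derivative $|\xi+\eta|\le|\gamma|\lesssim|\gamma_3|$ onto the largest frequency, and apply Lemma \ref{lemma: L2 norm of product mZK}. That part is fine.

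The genuine gap is in \eqref{eq: estimate G in X}. You count only ``one unit'' of derivative, but $\partial_xG(U)$ carries \emph{two} derivatives in total ($\partial_x$ outside and the $(\partial_x+\partial_y)$ inside $G$). In the low--low--high configuration ($|\gamma_1|,|\gamma_2|\sim 1$, $|\gamma|\sim|\gamma_3|$ large) the weight is $\sim|\gamma_3|^2$, the median gain is trivial, and each factor only offers one unit of smoothing from $X^{1,b}$; so no distribution of Sobolev weights reduces the estimate to $\|v_1v_2v_3\|_{L^2}\lesssim\prod\|v_j\|_{X^{0,b}}$, and Lemma \ref{lemma: L2 norm of product mZK} alone cannot close it. Your duality step also has the inclusion backwards: $1-b=\tfrac12-\epsilon<b$ means $X^{0,b}\subset X^{0,1-b}$, not $X^{0,1-b}\subset X^{0,b}$, so the dual test function is controlled only in the \emph{weaker} norm $X^{0,1-b}$ and Lemma \ref{lemma: L2 norm of product mZK} (which needs $b>\tfrac12$ on every factor) does not apply to it. The missing ingredient is precisely the negative modulation weight $b-1$: the paper writes $|\xi|\le\langle\gamma\rangle^{j}|\gamma|^{1-j}$ with $j=\tfrac14$, routes $|\gamma|^{1-j}\lesssim|\gamma_3|^{3/4}$ onto the highest factor, keeps the remaining $(\partial_x+\partial_y)$ derivative intact, and invokes Kinoshita's trilinear estimate \eqref{eq:partial estimates mZK} at regularity $s=\tfrac14$, which absorbs that derivative through the dispersive (modulation) structure rather than through Sobolev indices. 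This is also where the restriction $\alpha\le\tfrac34$ actually comes from ($\tfrac14+\alpha\le 1$ so that $\|D_{x,y}^{\alpha}U\|_{X^{1/4,b}}\le\|U\|_{X^{1,b}}$), not from the $\tfrac12+$ loss in Lemma \ref{lemma: L2 norm of product mZK} as you suggest. Without invoking \eqref{eq:partial estimates mZK} (or an equivalent smoothing estimate), your scheme cannot prove \eqref{eq: estimate G in X}.
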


\begin{proof}
Observe that
\begin{equation}\label{eq: transf G}
|\widehat{G(U)}(\gamma, \tau)| \leq C|\xi+\eta|\int_{*} \big(1-e^{-\sigma(|\gamma_1|+|\gamma_2|+|\gamma_3|-|\gamma|)}\big) |\widehat{U}(\gamma_1,\tau_1)|  \,|\widehat{U}(\gamma_2,\tau_2)| \,|\widehat{U}(\gamma_3,\tau_3)|,
\end{equation}
where $\int_*$ denotes the integral over the set $\gamma=\gamma_1 + \gamma_2 +\gamma_3$ and $\tau=\tau_1 + \tau_2 +\tau_3$.
 
Now, from the inequality \eqref{eq:exp function}, we get
\begin{equation}\label{eq: 1-e mZK}
    1-e^{-\sigma(|\gamma_1|+|\gamma_2|+|\gamma_3|-|\gamma|)} \leq \sigma^{\alpha}(|\gamma_1|+|\gamma_2|+|\gamma_3|-|\gamma|)^{\alpha}.
\end{equation}
Without loss of generality, we can assume that $|\gamma_1| \leq |\gamma_2| \leq |\gamma_3|$. A simple calculation shows that
\begin{equation*}
    |\gamma_1|+|\gamma_2|+|\gamma_3|-|\gamma| \leq 6|\gamma_2|,
\end{equation*}
and consequently, the estimate \eqref{eq: 1-e mZK} yields
\begin{equation}\label{eq: 1-e nova}
     1-e^{-\sigma(|\gamma_1|+|\gamma_2|+|\gamma_3|-|\gamma|)} \leq C\sigma^{\alpha}|\gamma_{2}|^{\alpha}.
\end{equation}
Using \eqref{eq: 1-e nova} in \eqref{eq: transf G}, we have
\begin{equation}\label{eq: tranf G nova}
|\widehat{G(U)}(\gamma, \tau)| \leq C\sigma^{\alpha}|\xi+\eta|\int_{*} |\gamma_2|^{\alpha}|\widehat{U}(\gamma_1,\tau_1)|\,|\widehat{U}(\gamma_2,\tau_2)|\,|\widehat{U}(\gamma_3,\tau_3)|.
\end{equation}
Moreover, one has
\begin{equation}\label{eq:4.19}
    |\xi+\eta\|\gamma_2|^{\alpha}\leq|\gamma\|\gamma_2|^{\alpha}\leq 3|\gamma_3\|\gamma_2|^{\alpha}.
\end{equation}
Now, using \eqref{eq:4.19} in \eqref{eq: tranf G nova} and an use of Plancherel's Theorem implies that
\begin{equation}\label{eq-420}
\begin{split}
\|G(U)\|_{L_{t,x,y}^{2}}&\leq C \sigma^{\alpha} \left\| \int_{*} |\widehat{U}(\gamma_1,\tau_1)|\,|\widehat{D_{x,y}^{\alpha} U}(\gamma_{2},\tau_{2})|\,|\widehat{D_{x,y} U}(\gamma_{3},\tau_{3})|\,\right\|_{L_{\xi,\eta,\tau}^2}\\
    &=C\sigma^{\alpha} \|w_1 w_2 w_{3}\|_{L_{t,x,y}^{2}},
\end{split}
\end{equation}
where $\widehat{w_1}(\gamma,   \gamma)=|\widehat{U}(\gamma,\tau)|$, $\widehat{w_2}(\gamma,   \tau)=|\widehat{D_{x,y}^{\alpha}U}(\gamma,\tau)|$ and $\widehat{w_3}(\gamma,   \tau)=|\widehat{D_{x,y}U}(\gamma,\tau)|$. 

By applying Lemma \ref{lemma: L2 norm of product mZK},  it follows from \eqref{eq-420} that
\begin{equation*}
\begin{split}
    \|G(U)\|_{L_{x,y}^2 L_t^2} &\leq C{\sigma}^{\alpha} \|w_1\|_{X^{\frac{1}{2}+,b}}\|w_{2}\|_{X^{0,b}}\|w_3\|_{X^{0,b}}\\
    &=C{\sigma}^{\alpha} \|U\|_{X^{\frac{1}{2}+,b}}\|D_{x,y}^{\alpha}U\|_{X^{0,b}}\|D_{x,y}U\|_{X^{0,b}}\\
    &\leq C{\sigma}^{\alpha} \|U\|_{X^{1,b}}^{3}.
\end{split}
\end{equation*}
This completes the proof of \eqref{eq: estimate G in L2}.

To prove \eqref{eq: estimate G in X}, first observe that for $0 \leq j \leq 1$, one has $\langle \gamma \rangle^{-j} \leq C|\gamma|^{-j}$ for all $\gamma \neq 0$. Using this fact, we obtain
\begin{equation}\label{eq: partial G}
\begin{split}
\|\partial_xG(U)\|_{X^{0,  b-1}} &=\left\|\langle \tau -\xi^3-\eta^3\rangle^{b-1} |\xi|\, |\widehat{G(U)}(\gamma,\tau)| \,\right\|_{L_{\tau,\xi,\eta}^{2}}\\
&\leq C\left\|\langle \tau - \xi^3-\eta^3 \rangle^{b-1}\langle\gamma \rangle^j |\gamma|^{1-j}|\widehat{G(U)}(\gamma,\tau)|\,\right\|_{L_{\tau,\xi,\eta}^{2}}.
\end{split}
\end{equation}
Assuming $|\gamma_1|\leq|\gamma_2| \leq |\gamma_3|$, one has $|\gamma|^{1-j} \leq 3^{1-j}|\gamma_{3}|^{1-j}$ and using  \eqref{eq: tranf G nova}, the estimate \eqref{eq: partial G} yields
\begin{equation}\label{eq:4.23}
    \begin{split}
\|&\partial_xG(U)\|_{X^{0,  b-1}}\\  
&\leq 
C\sigma^{\alpha}  \left\| \langle \tau - \xi^3-\eta^3 \rangle^{b-1}\langle\gamma \rangle^j |\gamma|^{1-j}  |\xi+\eta|\!\!\int_{*}\!\! |\gamma_2|^{\alpha}|\widehat{U}(\gamma_1,\tau_1)|\, |\widehat{U}(\gamma_2,\tau_2)|\,|\widehat{U}(\gamma_3,\tau_3)|\,\right\|_{L_{\tau,\xi,\eta}^{2}}\\
&\leq C\sigma^{\alpha}  \left\| \langle \tau - \xi^3-\eta^3 \rangle^{b-1}\langle\gamma \rangle^j  |\xi+\eta|\!\!\int_{*}\!\! |\gamma_{2}|^{\alpha}|\gamma_{3}|^{1-j}|\widehat{U}(\gamma_1,\tau_1)| \,|\widehat{U}(\gamma_2,\tau_2)|\,|\widehat{U}(\gamma_3,\tau_3)|\,\right\|_{L_{\tau,\xi,\eta}^{2}}\\
&=C\sigma^{\alpha} \left\|(\partial_x+\partial_y)(v_1 v_2 v_3)\right\|_{X^{j,   b-1}},
\end{split}
\end{equation}
for $v_1, v_2, v_3$ defined by $\widehat{v_1}(\gamma,   \tau)=|\widehat{U}(\gamma,\tau)|$, $\widehat{v_2}(\gamma,   \tau)=|\widehat{D_{x,y}^{\alpha}U}(\gamma,\tau)|$ and $\widehat{v_3}(\gamma,   \tau)=|\widehat{D_{x,y}^{1-j}U}(\gamma,\tau)|$.

Considering $j=\frac{1}{4}$, we can use the estimate \eqref{eq:partial estimates mZK} with $b=\frac{1}{2}+\epsilon$, to obtain from \eqref{eq:4.23} that
\begin{equation}\label{eq: partial G final}
\begin{split}
  \|\partial_xG(U)\|_{X^{0,  b-1}} &\leq C\sigma^{\alpha}\|v_1\|_{X^{\frac{1}{4}, b}}\|v_2\|_{X^{\frac{1}{4}, b}}\|v_{3}\|_{X^{\frac{1}{4}, b}}\\    &=C\sigma^{\alpha}\|U\|_{X^{\frac{1}{4}, b}} \|D_{x,y}^{\alpha}U\|_{X^{\frac{1}{4}, b}}\|D_{x,y}^{1-j}U\|_{X^{\frac{1}{4}, b}}\\  &\leq C\sigma^{\alpha}\|U\|_{X^{1, b}} \|U\|_{X^{\frac{1}{4}+\alpha, b}}\|U\|_{X^{1, b}}.
    \end{split}
\end{equation}
For $0\leq \alpha\leq \frac{3}{4}$, the estimate \eqref{eq: partial G final} yields
\begin{equation*}
    \|\partial_xG(U)\|_{X^{0,  b-1}} \leq C\sigma^{\alpha} \|U\|_{X^{1,b}}^{3},
\end{equation*}
as desired.
\end{proof}

In the following we use the estimate obtained in Lemma \ref{lemma: estimates G} to prove that the quantity $E_{\sigma}(t)$ defined in \eqref{eq:almost conserved mZK} is almost conserved. We start by proving the following estimate.

\begin{proposition}\label{prop:conserved quantity mZK}
    Let $\sigma>0$ and $\alpha \in \left[0,   \frac{3}{4}\right]$. Then, for any $b=\frac{1}{2}+\epsilon$, $0<\epsilon \ll 1$, there exists $C>0$ such that for any solution $u \in X_T^{\sigma,  1,   b}$ to the IVP \eqref{eq:new gZK} with $k=2$ in the interval $[0, T]$, we have
\begin{equation}\label{eq:conserved quantity mZK}
    \sup_{t\in [0, T]} E_\sigma (t) \leq E_\sigma (0) + C\sigma^{\alpha}\|u\|_{X_T^{\sigma,  1,   b}}^{4}(1+\|u\|_{X_T^{\sigma,  1,   b}}^{2}).
\end{equation}
\end{proposition}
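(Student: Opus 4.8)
The plan is to take as starting point the identity \eqref{eq:estimate E}, i.e.\ $E_\sigma(t')=E_\sigma(0)+R_\sigma(t')$ with $R_\sigma(t')$ the sum of the six integral terms written out in \eqref{eq:R mZK}. Thus the statement reduces to the uniform bound $|R_\sigma(t')|\le C\sigma^\alpha\|u\|_{X_T^{\sigma,1,b}}^4(1+\|u\|_{X_T^{\sigma,1,b}}^2)$ for $0<t'\le T$, after which one takes the supremum over $t'\in[0,T]$. Before estimating I would pass to an extension: fix $v$ extending $u$ from $\R^2\times(-T,T)$ to $\R^3$ with $\|v\|_{X^{\sigma,1,b}}\le 2\|u\|_{X_T^{\sigma,1,b}}$ and set $V=e^{\sigma|D_{x,y}|}v$, so that by \eqref{eq:exponential} one has $\|V\|_{X^{1,b}}=\|v\|_{X^{\sigma,1,b}}\le 2\|u\|_{X_T^{\sigma,1,b}}$. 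Since $G$, multiplication by $\chi_{[0,t']}$, and the operators $\partial_x,\partial_y$ are all local in time, every one of the six integrals in $R_\sigma(t')$ is unchanged when $U=e^{\sigma|D_{x,y}|}u$ is replaced by $V$, so one may work with $V$ on all of $\R^3$.

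Next I would estimate the six terms one at a time. For the first term $2\int\!\!\int\chi_{[0,t']}VG(V)$ and the last term $-2\mu a\int\!\!\int\chi_{[0,t']}V^3G(V)$, Cauchy--Schwarz in $L^2_{t,x,y}$ bounds them (using $0\le\chi\le1$ and that $|\mu a|$ is a fixed constant) by $\|V\|_{L^2_{t,x,y}}\|G(V)\|_{L^2_{t,x,y}}$ and $C\|V^3\|_{L^2_{t,x,y}}\|G(V)\|_{L^2_{t,x,y}}$; here $\|V\|_{L^2_{t,x,y}}\le\|V\|_{X^{1,b}}$, $\|V^3\|_{L^2_{t,x,y}}\le C\|V\|_{X^{1,b}}^3$ by Lemma \ref{lemma: L2 norm of product mZK} with $u_1=u_2=u_3=V$ (and the embeddings $X^{1,b}\hookrightarrow X^{0,b}$, $X^{1,b}\hookrightarrow X^{\frac{1}{2}+,b}$), and $\|G(V)\|_{L^2_{t,x,y}}\le C\sigma^\alpha\|V\|_{X^{1,b}}^3$ by \eqref{eq: estimate G in L2}; so these two terms are $\le C\sigma^\alpha\|V\|_{X^{1,b}}^4$ and $\le C\sigma^\alpha\|V\|_{X^{1,b}}^6$. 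The remaining four terms all have the form $\int\!\!\int\chi_{[0,t']}\,\partial_i V\,\partial_j G(V)$ with $i,j\in\{x,y\}$; for these I would use the (standard) duality pairing between $X^{0,1-b}$ and $X^{0,b-1}$, which gives a bound by $\|\chi_{[0,t']}\partial_i V\|_{X^{0,1-b}}\|\chi_{[0,t']}\partial_j G(V)\|_{X^{0,b-1}}$. Since $b=\tfrac12+\epsilon$ with $\epsilon$ small, both $1-b$ and $b-1$ lie in $(-\tfrac12,\tfrac12)$, so Lemma \ref{lemma:restriction} applies and yields $\|\chi_{[0,t']}\partial_i V\|_{X^{0,1-b}}\lesssim\|\partial_i V\|_{X^{0,1-b}}\le\|V\|_{X^{1,1-b}}\le\|V\|_{X^{1,b}}$ and $\|\chi_{[0,t']}\partial_j G(V)\|_{X^{0,b-1}}\lesssim\|\partial_j G(V)\|_{X^{0,b-1}}\le C\sigma^\alpha\|V\|_{X^{1,b}}^3$, the last inequality being \eqref{eq: estimate G in X} for $j=x$ and its $x\leftrightarrow y$ analogue (valid by the symmetry of the equation and of $G$) for $j=y$. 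Hence each of these four terms is also $\le C\sigma^\alpha\|V\|_{X^{1,b}}^4$.

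Summing, $|R_\sigma(t')|\le C\sigma^\alpha(\|V\|_{X^{1,b}}^4+\|V\|_{X^{1,b}}^6)$, and using $\|V\|_{X^{1,b}}\le 2\|u\|_{X_T^{\sigma,1,b}}$ this is $\le C\sigma^\alpha\|u\|_{X_T^{\sigma,1,b}}^4(1+\|u\|_{X_T^{\sigma,1,b}}^2)$ uniformly in $t'$; taking $\sup_{t'\in[0,T]}$ in \eqref{eq:estimate E} gives \eqref{eq:conserved quantity mZK}. The substantive work here is already packaged in Lemma \ref{lemma: estimates G} (which itself rests on Kinoshita's trilinear estimate \eqref{eq:partial estimates mZK}), so the only real care the present argument needs is organizational: for each of the six terms one must pick the right dual pair of Bourgain exponents, check that the modulation exponent lands in $(-\tfrac12,\tfrac12)$ so that the sharp time-restriction Lemma \ref{lemma:restriction} can be invoked, and note that the $\partial_y$-version of \eqref{eq: estimate G in X} holds by symmetry. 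I expect the only genuinely delicate point to be the cubic term $\int\!\!\int\chi_{[0,t']}V^3G(V)$, whose sixth-power bound forces the extra factor $(1+\|u\|_{X_T^{\sigma,1,b}}^2)$ in the statement rather than a clean quartic estimate.
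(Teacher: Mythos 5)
Your proposal is correct and follows essentially the same route as the paper: starting from \eqref{eq:estimate E}--\eqref{eq:R mZK}, estimating the first and last terms by Cauchy--Schwarz in $L^2_{t,x,y}$ together with Lemma \ref{lemma: L2 norm of product mZK} and \eqref{eq: estimate G in L2}, and the four derivative terms by the $X^{0,1-b}$--$X^{0,b-1}$ duality pairing, the time-restriction Lemma \ref{lemma:restriction}, and \eqref{eq: estimate G in X} (with its $\partial_y$ analogue), then taking the supremum in $t'$. Your explicit extension $v$ of $u$ is just a more spelled-out version of the paper's use of the restricted norms $X_T^{\sigma,1,b}$ and does not change the argument.
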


\begin{proof}
   In view of the relations \eqref{eq:estimate E} and \eqref{eq:R mZK}, we first find estimates for each term of $|R_\sigma(t')|$ for all $0<t'\leq T$. For the first and the last terms in \eqref{eq:R mZK} we use the Cauchy-Schwarz inequality, Lemmas \ref{lemma:restriction} and \ref{lemma: L2 norm of product mZK} and the estimate \eqref{eq: estimate G in L2} restricted to the time slab and obtain that for any $b=\frac{1}{2}+\epsilon$, $0<\epsilon \ll 1$, there exists $C>0$ such that
\begin{equation}\label{eq:estimate first term}
    \begin{split}
    \left|\int \int \chi_{[0, t']}UG(U)dxdydt\right| &\leq \|\chi_{[0, t']}U\|_{L_{t,x,y}^{2}}\|\chi_{[0, t']}G(U)\|_{L_{t,x,y}^{2}}\\
    &\leq \|U\|_{X_T^{0, 0}}\|G(U)\|_{X_T^{0, 0}}\\
    &\leq C\sigma^{\alpha}\|u\|_{X_T^{ \sigma,   1,   b}}^{4}
    \end{split}
\end{equation}
and 
\begin{equation}\label{eq:estimate third term}
    \begin{split}
    \left|\int \int \chi_{[0, t']}U^{3}G(U)dxdydt\right| &\leq \|\chi_{[0, t']}U^{3}\|_{L_{t,x,y}^{2}}\|\chi_{[0, t']}G(U)\|_{L_{t,x,y}^{2}}\\
    &\leq \|U^{3}\|_{X_T^{0, 0}}\|G(U)\|_{X_T^{0, 0}}\\
    &\leq C\sigma^{\alpha}\|u\|_{X_T^{ \sigma,   1,   b}}^{6},
    \end{split}
\end{equation}
for all $0<t'\leq T$.

For the second term in \eqref{eq:R mZK}, we use the Cauchy-Schwarz inequality, Lemma \ref{lemma:restriction} and estimate \eqref{eq: estimate G in X}, to obtain
\begin{equation}\label{eq:estimate second term}
    \begin{split}
    \left|\int \int \chi_{[0, t']}\partial_xU\partial_xG(U)dxdydt\right|& \leq \|\chi_{[0, t']}\partial_xU\|_{X^{0,   1-b}}\|\chi_{[0, t']}\partial_xG(U)\|_{X^{0,   b-1}}\\
    & \leq \|\partial_xU\|_{X_T^{0,   1-b}}\|\partial_xG(U)\|_{X_T^{0,   b-1}}\\
    &\leq C\sigma^{\alpha}\|u\|_{X_T^{ \sigma,   1,   b}}^{4},
    \end{split}
\end{equation}
 for all $0<t'\leq T$, where $\frac{1}{2}<b<1$ is the same as before. The estimates for the remaining terms in \eqref{eq:R mZK} are derived using a similar approach. Using these estimates and taking the supremum over $0<t'\leq T$ in \eqref{eq:estimate E}, it follows that
\begin{equation*}
    \begin{split}
    \sup_{t'\in [0, T]} E_\sigma (t') &\leq E_\sigma (0) + \sup_{t'\in [0, T]}|R_\sigma(t')|\\
    & \leq E_\sigma (0) + C\sigma^{\alpha}\|u\|_{X_T^{\sigma,  1,   b}}^{4}(1+\|u\|_{X_T^{\sigma,  1,   b}}^{2}),
    \end{split}
\end{equation*}
as required. 
 
\end{proof}

Finally, we prove that $E_{\sigma}(t)$ is an almost conserved quantity at the $H^1$-level of Sobolev regularity as a corollary of the previous proposition.

\begin{corollary}\label{cor: almost conserved quantity mZK}
Let $\sigma>0$ and $\alpha \in \left[0,\frac{3}{4}\right]$. Then, for any $b=\frac{1}{2}+\epsilon$, $0<\epsilon \ll 1$, there exists $C>0$ such that for any solution $u \in X_T^{\sigma,  1,   b}$ to the IVP \eqref{eq:new gZK} with $k=2$ in the defocusing case ($\mu=-1$) given by Theorem \ref{teo:local mZK}, we have
\begin{equation}\label{eq:estimate energy}
    \sup_{t\in [0, T]} E_\sigma (t) \leq E_\sigma (0) + C\sigma^{\alpha} E_\sigma (0)^{2}(1+E_\sigma (0)).
\end{equation}
\end{corollary}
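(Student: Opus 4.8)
The plan is to chain Proposition~\ref{prop:conserved quantity mZK} together with the local bound \eqref{eq:bound for solution mZK teo} and a coercivity inequality for $E_\sigma$ that is available precisely in the defocusing case.

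First I would invoke Theorem~\ref{teo:local mZK} with $s=1$: the solution $u$ of \eqref{eq:new gZK} with $k=2$ produced by that theorem belongs to $X_T^{\sigma,1,\frac12+\epsilon}$ (with $T=T_0$) and obeys $\|u\|_{X_T^{\sigma,1,\frac12+\epsilon}}\le C\|u_0\|_{G^{\sigma,1}}$, for $\epsilon>0$ small enough that Proposition~\ref{prop:conserved quantity mZK} also applies with $b=\frac12+\epsilon$. Feeding this bound into \eqref{eq:conserved quantity mZK} gives, for every $\alpha\in[0,\frac34]$,
\[
\sup_{t\in[0,T]}E_\sigma(t)\ \le\ E_\sigma(0)+C\sigma^{\alpha}\,\|u_0\|_{G^{\sigma,1}}^{4}\bigl(1+\|u_0\|_{G^{\sigma,1}}^{2}\bigr).
\]

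The second step is to bound $\|u_0\|_{G^{\sigma,1}}^2$ from above by $E_\sigma(0)$. Setting $U_0=e^{\sigma|D_{x,y}|}u_0$, Plancherel gives $\|u_0\|_{G^{\sigma,1}}^2=\|U_0\|_{H^1}^2=\|U_0\|_{L^2}^2+\|\partial_xU_0\|_{L^2}^2+\|\partial_yU_0\|_{L^2}^2$. Cauchy--Schwarz and Young yield $\bigl|\int\partial_xU_0\,\partial_yU_0\,dxdy\bigr|\le\frac12\|\partial_xU_0\|_{L^2}^2+\frac12\|\partial_yU_0\|_{L^2}^2$, and in the defocusing case $\mu=-1$ the quartic term satisfies $-\frac{\mu a}{2}\|U_0\|_{L^4}^4=\frac{a}{2}\|U_0\|_{L^4}^4\ge0$. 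Hence, from the definition \eqref{eq:almost conserved mZK},
\[
E_\sigma(0)\ \ge\ \|U_0\|_{L^2}^2+\tfrac12\|\partial_xU_0\|_{L^2}^2+\tfrac12\|\partial_yU_0\|_{L^2}^2\ \ge\ \tfrac12\,\|u_0\|_{G^{\sigma,1}}^2,
\]
so in particular $E_\sigma(0)\ge0$ and $\|u_0\|_{G^{\sigma,1}}^2\le2E_\sigma(0)$.

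Finally, I would substitute $\|u_0\|_{G^{\sigma,1}}^2\le2E_\sigma(0)$ into the first display — using $\|u_0\|_{G^{\sigma,1}}^4\le4E_\sigma(0)^2$ and $1+\|u_0\|_{G^{\sigma,1}}^2\le2\bigl(1+E_\sigma(0)\bigr)$ — and absorb the numerical factors into $C$ to obtain \eqref{eq:estimate energy}. The whole argument is short; the only genuinely delicate point (and the sole place the defocusing hypothesis enters) is the coercivity step, which relies on the favorable sign of the quartic contribution to $E_\sigma$ and would break down for $\mu=1$ — this is exactly why Theorem~\ref{teo:global mZK} is stated only in the defocusing case.
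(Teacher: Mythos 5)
Your proposal is correct and follows essentially the same route as the paper: coercivity of $E_\sigma$ in the defocusing case (the paper writes the cross term as $\tfrac12[\partial_xU(0)-\partial_yU(0)]^2\ge 0$, which is the same as your Cauchy–Schwarz/Young step) giving $\|u_0\|_{G^{\sigma,1}}^2\le 2E_\sigma(0)$, combined with the local bound \eqref{eq:bound for solution mZK teo} and Proposition \ref{prop:conserved quantity mZK}. The only difference is the order in which you insert the bounds, which is immaterial.
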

\begin{proof}
    Denoting $U=e^{\sigma|D_{x,y}|}u$, for $\mu=-1$, from \eqref{eq:almost conserved mZK}, we have
    \begin{equation}\label{eq:conserved defocusing}
    \begin{split}
    E_\sigma (0)&\geq \|U(0)\|_{L^2}^2+\int\big[\partial_xU(0)\big]^2dxdy + \int \big[\partial_yU(0)\big]^2dxdy -\int\partial_xU(0)\partial_yU(0)dxdy \\
    &=\|U(0)\|_{L^2}^2+\frac{1}{2}\int\Big(\big[\partial_xU(0)\big]^2+\big[\partial_yU(0)\big]^2\Big) dxdy +\\
    &\qquad \qquad \qquad \qquad + \int \Bigg(\frac{\big[\partial_xU(0)\big]^2+\big[\partial_yU(0)\big]^2}{2}-\partial_xU(0)\partial_yU(0)\Bigg) dxdy.
    \end{split}
\end{equation}
Observe that the last integral in \eqref{eq:conserved defocusing} is nonnegative since
\begin{equation}\label{eq:nonnegative}
\frac{\big[\partial_xU(0)\big]^2+\big[\partial_yU(0)\big]^2}{2}-\partial_xU(0)\partial_yU(0)=\frac{1}{2}\Big[\partial_xU(0)-\partial_yU(0)\Big]^2 \geq 0.
\end{equation}
Therefore, in the view of \eqref{eq:nonnegative}, the estimate  \eqref{eq:conserved defocusing} yields
\begin{equation}\label{eq:conserved defocusing new}
\begin{split}
E_\sigma (0)&\geq \frac{1}{2}\|U(0)\|_{L^2}^2+\frac{1}{2}\int\Big(\big[\partial_xU(0)\big]^2+\big[\partial_yU(0)\big]^2\Big) dxdy\\
&=\frac{1}{2}\|u_0\|_{G^{\sigma,   1}}^2.
\end{split}
\end{equation}
From \eqref{eq:bound for solution mZK teo} and \eqref{eq:conserved defocusing new}, we get
\begin{equation}\label{eq:estimate u energy}
    \|u\|_{X_T^{\sigma,  1,   b}}\leq C\|u_0\|_{G^{\sigma,   1}} \leq C E_\sigma (0)^{\frac{1}{2}}.
\end{equation}
Finally, an use of \eqref{eq:estimate u energy} in \eqref{eq:conserved quantity mZK} yields the  required estimate  \eqref{eq:estimate energy}.
\end{proof}

\begin{remark}
    For the solutions to the IVP \eqref{eq:new gZK} with $k=2$ in the focusing case ($\mu=1$), from \eqref{eq:almost conserved mZK} one has
    \begin{equation*}
        E_\sigma (0)\leq \|U(0)\|_{L^2}^2+\int \big[\partial_xU(0)\big]^2dxdy + \int \big[\partial_yU(0)\big]^2dxdy -\int\partial_xU(0)\partial_yU(0)dxdy.
    \end{equation*}
This estimate cannot be used to obtain an estimate of the form \eqref{eq:estimate energy} which plays a crucial role in the argument used to prove Theorem \ref{teo:global mZK}. For this reason, we only obtain the lower bound for the evolution of the radius of analyticity for the defocusing mZK equation.
\end{remark}

\section{Global Analytic Solution - Proof of Theorems \ref{teo:global ZK} and \ref{teo:global mZK}}\label{sec:5}

This section is devoted to providing proofs of the global results stated in Theorems \ref{teo:global ZK} and \ref{teo:global mZK}. The ideia of proof of both results is similar using the conserved quantities \eqref{eq:conserved quantity ZK} and \eqref{eq:conserved quantity mZK}. For the sake of completeness, we present a detailed proof of Theorem \ref{teo:global ZK} and provide some hints for Theorem \ref{teo:global mZK}.

\begin{proof}[Proof of Theorem \ref{teo:global ZK}]
Fix $k=1$, $\sigma_0>0$, $s>-\frac{1}{4}$ and $u_0 \in G^{\sigma_0,   s}(\R^2)$. Moreover, let $\theta \in \left[0, \frac{1}{4}\right)$ and $\epsilon>0$ be as in Corollary \ref{cor: almost conserved quantity ZK}. By the invariance of the ZK equation under reflection $(t, x) \rightarrow (-t, -x)$, it suffices to consider $t \geq 0$. With this consideration, we will prove that the local solution $u$ given by Theorem \ref{teo:local ZK} can be extended to any time interval $[0, T]$ and satisfies
\begin{equation*}
    u \in C([0, T]:G^{\sigma(T), s}) \quad \text{for all } T>0,
\end{equation*}
where
\begin{equation}\label{eq:sigma T}
\sigma(T)\geq cT^{-\frac{1}{\theta}}
\end{equation}
and $c>0$ is a constant depending on $\|u_0\|_{G^{\sigma_0,   s}}, \, \sigma_0, \, s$ and $\theta$.

By Theorem \ref{teo:local ZK}, there exists a maximal time $T^*=T^*(\|u_0\|_{G^{\sigma_0,   s}},  \sigma_0,  s) \in (0,  \infty]$ such that
\begin{equation*}
   u \in C([0, T^*):G^{\sigma_0, s}).
\end{equation*}
If $T^*=\infty$, we are done. So, we assume that $T^*<\infty$ and in this case it remains to prove 
\begin{equation*}
    u \in C([0, T]:G^{\sigma(T), s}) \quad \text{for all } T\geq T^*.
\end{equation*}
If we prove this in the case $s=0$ then the general case essentially reduces to $s=0$ using the inclusion \eqref{eq:Gegrey embedding}. For more details we refer to the work in \cite{SS}.

Assume $s=0$ and let $u \in X_{T_0}^{\sigma_0,  0,   \frac{1}{2}+\frac{1}{24}}$ be the local solution to the IVP \eqref{eq:new gZK} with $k=1$ given by Theorem \ref{teo:local ZK}. We have $M_{\sigma_0}(0) = \|u_0\|_{G^{\sigma_0, 0}}^2$ and we can choose the lifespan $T_0$ given in \eqref{eq:T0 ZK} as
\begin{equation*}
    T_0=\frac{c_0}{(1+M_{\sigma_0}(0))^d}
\end{equation*}
for appropriate constants $c_0>0$ and $d>1$.

Let $T\geq T^*$. We will show that, for $\sigma>0$ sufficiently small, one can repeatedly apply Theorem \ref{teo:local ZK} and Corollary \ref{cor: almost conserved quantity ZK} with time step
\begin{equation}\label{eq:delta}
    \delta=\frac{c_0}{(1+2M_{\sigma_0}(0))^d}.
\end{equation}
to extend the local solution to the interval $[0,T]$.
Since $\delta\leq T_0 \leq T^* \leq T$, it follows that there exists $n \in \N$ such that $T \in \left[n\delta, (n+1)\delta\right)$ and by induction, we will show that for $j \in \{1, \cdots,  n\}$,
\begin{align}
    \sup_{t \in [0,  j\delta]} M_\sigma (t) &\leq M_{\sigma}(0)+2^{\frac{3}{2}}C\sigma^{\theta} jM_{\sigma_0}(0)^{\frac{3}{2}},\label{eq:induction 1 ZK}\\ 
    \sup_{t \in [0,  j\delta]} M_\sigma (t) &\leq 2 M_{\sigma_0} (0), \label{eq:induction 2 ZK}
\end{align}
under the smallness conditions
\begin{equation}\label{eq:condition 1 ZK}
    \sigma \leq \sigma_0
\end{equation}
and
\begin{equation}\label{eq:condition 2 ZK}
    2^\frac{3}{2}\frac{T}{\delta}C\sigma^{\theta}M_{\sigma_0}(0)^{\frac{1}{2}} \leq 1,
\end{equation}
where $C>0$ is the constant from Corollary \ref{cor: almost conserved quantity ZK}.

In the first step $j=1$, from the local well-posedness result stated in Theorem \ref{teo:local ZK}, we cover the interval $[0,  \delta]$ and by Corollary \ref{cor: almost conserved quantity ZK}, we have
\begin{equation}\label{eq:indiction base}
\begin{split}
    \sup_{t\in [0, \delta]} M_\sigma (t) \leq M_\sigma (0) + C\sigma^{\theta} M_\sigma (0)^{\frac{3}{2}}.
    \end{split}
\end{equation}
Using the conditions \eqref{eq:condition 1 ZK} and \eqref{eq:condition 2 ZK} in \eqref{eq:indiction base} we conclude that
\begin{equation*}
    \begin{split}
    \sup_{t \in [0,  \delta]} M_\sigma (t) &\leq M_{\sigma_0} (0) + C\sigma^{\theta} M_{\sigma_0} (0)^{\frac{3}{2}}\\
    &=M_{\sigma_0} (0)\left(1+C\sigma^{\theta} M_{\sigma_0} (0)^{\frac{1}{2}}\right)\\
    &\leq M_{\sigma_0} (0)\left(1+\frac{T}{\delta}C\sigma^{\theta} M_{\sigma_0} (0)^{\frac{1}{2}}\right)\\
    &\leq 2 M_{\sigma_0} (0),
    \end{split}
\end{equation*}
which is \eqref{eq:induction 2 ZK} for $j=1$.

Since $M_\sigma(\delta)\leq 2M_{\sigma_0}(0)$, it follows that
\begin{equation*}
    \|u(\delta)\|{G^{\sigma,1}}^2\leq M\sigma(\delta)\leq 2M_{\sigma_0}(0) <\infty,
\end{equation*}
and one can apply the local well-posedness result with initial data $u(\delta)$ in place of $u_0$ in order to obtain an extension of the solution $u$ to the interval $[\delta,2\delta]$. Moreover, $u \in C([\delta,2\delta]:G^{\sigma,0})$ and $u$ satisfies the almost conservation law from Corollary \ref{cor: almost conserved quantity ZK} in $[\delta,2\delta]$, that is,
\begin{equation}\label{eq:before j=2}
    \sup_{t \in [\delta,  2\delta]} M_\sigma (t) \leq M_{\sigma}(\delta)+C\sigma^{\theta} M_{\sigma}(\delta)^{\frac{3}{2}}.
\end{equation}
For $n\geq 2$, from \eqref{eq:induction 1 ZK} and \eqref{eq:induction 2 ZK} with $j=1$, it follows from \eqref{eq:before j=2} that for $j=2$
\begin{equation}\label{eq:j=2}
\begin{split}
    \sup_{t \in [\delta,  2\delta]} M_\sigma (t) &\leq M_{\sigma}(\delta)+2^{\frac{3}{2}}C\sigma^{\theta} M_{\sigma_0}(0)^{\frac{3}{2}}\\
    &\leq M_\sigma(0) + 2^{\frac{3}{2}}C\sigma^{\theta}2M_{\sigma_0}^{\frac{3}{2}} (0).
    \end{split}
\end{equation}
Since
\begin{equation*}
    2\leq n\leq \frac{T}{\delta},
\end{equation*}
it follows from the conditions \eqref{eq:condition 1 ZK} and \eqref{eq:condition 2 ZK}, and from \eqref{eq:j=2} that
\begin{equation*}
    \begin{split}
    \sup_{t \in [\delta, 2\delta]}  M_\sigma (t)&\leq M_\sigma(0) + 2^{\frac{3}{2}}C\sigma^{\theta}\frac{T  }{\delta}M_{\sigma_0}^{\frac{3}{2}} (0)\\
    &\leq 2 M_{\sigma_0}(0).
    \end{split}
\end{equation*}

Now, assume that \eqref{eq:induction 1 ZK} and \eqref{eq:induction 2 ZK} hold for some $j \in \{1,\cdots,  n-1\}$. For $j+1$, applying the local well-posedness result with initial data $u(j\delta)$ and the estimate \eqref{eq:estimate mass} from Corollary \ref{cor: almost conserved quantity ZK}, we have
\begin{equation}\label{eq:induction passage}
\begin{split}
\sup_{t \in [j\delta, (j+1)\delta]}M_\sigma (t)&\leq M_\sigma(j\delta) + C\sigma^{\theta} M_\sigma^{\frac{3}{2}} (j\delta)\\
&\leq M_\sigma(j\delta) + 2^{\frac{3}{2}}C\sigma^{\theta} M_{\sigma_0}^{\frac{3}{2}} (0)\\
&\leq M_\sigma(0) + 2^{\frac{3}{2}}C\sigma^{\theta}(j+1)M_{\sigma_0}^{\frac{3}{2}} (0).
\end{split}
\end{equation}
Moreover, since
\begin{equation*}
    j+1 \leq n \leq \frac{T}{\delta} ,
\end{equation*}
it follows from the conditions \eqref{eq:condition 1 ZK} and \eqref{eq:condition 2 ZK} and from \eqref{eq:induction passage} that
\begin{equation*}
    \begin{split}
    \sup_{t \in [j\delta, (j+1)\delta]}  M_\sigma (t)&\leq M_\sigma(0) + 2^{\frac{3}{2}}C\sigma^{\theta}\frac{T  }{\delta}M_{\sigma_0}^{\frac{3}{2}} (0)\\
    &\leq 2 M_{\sigma_0}(0).
    \end{split}
\end{equation*}

Thus, we can extend the local solution $u$ to the interval $[0,(n+1)\delta)$, under the assumptions \eqref{eq:condition 1 ZK} and \eqref{eq:condition 2 ZK}. Since $T\geq T^*$, the condition \eqref{eq:condition 2 ZK} must fail for $\sigma=\sigma_0$ since otherwise we would be able to continue the solution in $G^{\sigma_0,   0}$ beyond the time $T$, contradicting the maximality of $T^*$.  Therefore, there exists some $\sigma \in (0, \sigma_0)$ such that 
\begin{equation}\label{eq:sigma mZK}
    2^{\frac{3}{2}}\frac{T}{\delta}C\sigma^{\theta}M_{\sigma_0}^{\frac{1}{2}}(0) = 1.
\end{equation}
Moreover, $\sigma$ satisfies \eqref{eq:condition 1 ZK} and \eqref{eq:condition 2 ZK} and
\begin{equation*}
    \sigma(T)=\left[\frac{c_0}{2^{\frac{3}{2}}TCM_{\sigma_0}^{\frac{1}{2}}(0)(1+2M_{\sigma_0}(0))^a}\right]^{\frac{1}{\theta}}=: c_1T^{-\frac{1}{\theta}},
\end{equation*}
which gives \eqref{eq:sigma T} if we choose $c\leq c_1$.

From Corollary \ref{cor: almost conserved quantity ZK}, one can consider $\theta \in \left[0,  \frac{1}{4}\right)$. Choosing the maximum value of $\theta$, one has
\begin{equation*}
    \sigma(T) \geq cT^{-(4+\epsilon)}
\end{equation*}
and the proof for $s=0$ is complete. For other values of $s \in \R$, the proof follows using the inclusion \eqref{eq:Gegrey embedding} as described above.
\end{proof}

\begin{proof}[Proof of Theorem \ref{teo:global mZK}] 
The proof of this theorem is similar to that of Theorem \ref{teo:global ZK} and we present a few steps here. Our objective is to prove that the local solution $u$ given by Theorem \ref{teo:local mZK} in the defocusing case ($\mu=-1$) can be extended to any time interval $[0, T]$ and satisfies
\begin{equation*}
    u \in C([0, T]:G^{\sigma(T), s}) \quad \text{for all } T>0,
\end{equation*}
where
\begin{equation}\label{eq:sigma T mZK}
\sigma(T)\geq cT^{-\frac{1}{\alpha}}
\end{equation}
and $c>0$ is a constant depending on $\|u_0\|_{G^{\sigma_0,   s}},  \, \sigma_0, \, s$ and $\alpha$.

In this case, we fix $s=1$ and use \eqref{eq:estimate energy} with $\alpha \in \left[0,\frac{3}{4}\right]$. From \eqref{eq:conserved defocusing new}, we have\break $2E_{\sigma_0}(0) \geq \|u_0\|_{G^{\sigma_0, 1}}^2$ and from Theorem \ref{teo:local mZK} with $s=1$, we can take the lifespan $T_0$ given in \eqref{eq:T0 mZK} as
\begin{equation*}
    T_0=\frac{c_0}{(1+2E_{\sigma_0}(0))^d}
\end{equation*}
for appropriate constants $c_0>0$ and $d>1$. With this in mind, we aim to show that the local solution $u$ can be extended to any time interval $[0,T]$ using repeatedly Theorem \ref{teo:local mZK} and Corollary \ref{cor: almost conserved quantity mZK} with the time step
\begin{equation}\label{eq:delta mZK}
    \delta=\frac{c_0}{(1+4E_{\sigma_0}(0))^d},
\end{equation}
where $T\geq T^*$ and $T^*$ is the maximal time of existence.

For this purpose, since $\delta\leq T_0 \leq T^* \leq T$, it follows that there exists $n \in \N$ such that $T \in \left[n\delta, (n+1)\delta\right)$ and, by induction, it can be shown that for $j \in \{1, \cdots,  n\}$,
\begin{align}
    \sup_{t \in [0,  j\delta]} E_\sigma (t) &\leq E_{\sigma}(0)+2^{3}C\sigma^{\alpha} jE_{\sigma_0}(0)^{2}(1+E_{\sigma_0}(0)),\label{eq:induction 1 mZK}\\ 
    \sup_{t \in [0,  j\delta]} E_\sigma (t) &\leq 2 E_{\sigma_0} (0), \label{eq:induction 2 mZK}
\end{align}
under the smallness conditions
\begin{equation}\label{eq:condition 1 mZK}
    \sigma \leq \sigma_0
\end{equation}
and
\begin{equation}\label{eq:condition 2 mZK}
    2^{3}\frac{T}{\delta}C\sigma^{\alpha}E_{\sigma_0}(0)^2(1+E_{\sigma_0}(0)) \leq 1,
\end{equation}
where $C>0$ is the constant from Corollary \ref{cor: almost conserved quantity mZK}. 

As in the proof of Theorem \ref{teo:global ZK},  from the maximality of $T^*$, one concludes that there exists $\sigma \in (0, \sigma_0)$ such that
\begin{equation}\label{eq:sigma}
    2^{4}\frac{T}{\delta}C\sigma^{\alpha}E_{\sigma_0}(0)(1+E_{\sigma_0}(0)) = 1,
\end{equation}
and hence
\begin{equation*}
    \sigma(T)=\Big[\frac{\delta}{2^{3}TCE_{\sigma_0}(0)(1+E_{\sigma_0}(0))}Big]^{\frac{1}{\alpha}}=: c_1T^{-\frac{1}{\alpha}},
\end{equation*}
which gives \eqref{eq:sigma T mZK} if we choose $c\leq c_1$.

Considering $\alpha=\frac{3}{4}$, which is the maximum value that can be considered in view of Corollary \ref{cor: almost conserved quantity mZK}, one has
\begin{equation*}
    \sigma(T) \geq cT^{-\frac{4}{3}}
\end{equation*}
and the proof for $s=1$ is concluded.
\end{proof}

\subsection*{Acknowledgment}
The first author acknowledges the  support from CAPES, Brazil and the second author acknowledges the grants from CNPq, Brazil (\#307790/2020-7) and FAPESP, Brazil (\#2024/10613-4). The authors would also like to thank the unanimous referees whose comments helped immensely to improve the original manuscript.\\

\end{document}